\documentclass[12pt,a4paper,reqno]{article}
\usepackage{latexsym,amsfonts,amssymb,amsthm,upref,amscd}
\usepackage[T1]{fontenc}
\usepackage[reqno]{amsmath}

\usepackage[mathlines]{lineno}

\usepackage{changes}  
\definechangesauthor[name={Author1}, color=red]{A1} 
\definechangesauthor[name={Author2}, color=green]{A2} 

\usepackage{times}
\usepackage{mathrsfs}
\usepackage{constants}
\usepackage{booktabs}
\usepackage{verbatim}
\usepackage{pstricks,ifthen,calc}
\usepackage{array}
\usepackage[labelformat=simple,labelsep=colon]{caption}
\usepackage{xcolor}

\usepackage[breaklinks,colorlinks,linkcolor=red,
anchorcolor=blue,
bookmarks=true,
bookmarksopen=true,
citecolor=blue,
urlcolor=magenta]{hyperref}
\usepackage{xcolor}

\usepackage[misc]{ifsym}
\usepackage{fancyhdr}
\usepackage{geometry}
\usepackage{graphicx}
\usepackage{subfigure}

\usepackage[abbrev,lite,nobysame]{amsrefs}
\usepackage{authblk}

\newcommand{\xqedhere}[2]{%
	\rlap{\hbox to#1{\hfil\llap{\ensuremath{#2}}}}}

\renewcommand{\thefootnote}{}
\newtheorem{Theorem}{Theorem}[section]

\newtheorem{lemma}[Theorem]{Lemma}

\newtheorem{corollary}[Theorem]{Corollary}
\newtheorem{remark}[Theorem]{Remark}
\newtheorem{Open Problem}[Theorem]{Open Problem}

\makeatletter \@addtoreset{equation}{section} \makeatother
\newcommand{\abs}[1]{\lvert#1\rvert}

\newcommand{\st}{\,\vert\,}
\newcommand{\dif}{\,\mathrm{d}}
\newcommand{\R}{\mathbb{R}}

\newcommand{\cP}{\mathcal{P}}

\newcommand{\cB}{\mathcal{B}}

\newcommand{\cS}{\mathcal{S}}

\DeclareMathOperator{\dist}{dist}

\makeatletter
\def\thanks#1{\protected@xdef\@thanks{\@thanks
		\protect\footnotetext{#1}}}
\makeatother
\begin{document}
	\setcounter{footnote}{0}
	\renewcommand{\thefootnote}{\customsymbol{footnote}}
	
		\title{\bf  \Large Normalized groundstates for mixed $(p,2)$-Laplacian equations in $\mathbb R^2$ with exponential critical growth
	}
		\author[1]{\small Jiankang Xia$^\ast$\thanks{$^\ast$Corresponding author. E-mail address: jiankangxia@nwpu.edu.cn}}
	\author[2,1]{\small Chao Zhong$^{\dagger}$\footnote{$^{\dagger}$E-mail address: chaozhonghphz@mail.nwpu.edu.cn }}
	\affil[1]{{\footnotesize School of Mathematics and Statistics, Northwestern Polytechnical University, Xi'an 710129, China}}
	\affil[2]{{\footnotesize School of Mathematical Sciences, Beijing Normal University, Beijing, 100875, China}}
	\date{}
	\maketitle

	\vskip -.3 in
	\begin{minipage}{14cm}
			{\small {\bf Abstract:} We investigate normalized groundstates for  mixed $(p,2)$-Laplacian equations
				\begin{align*}
					\begin{cases}
						-\Delta_p u-\Delta u+\lambda u=f(u) & \text{in } \mathbb{R}^2,\\
						\displaystyle \int_{\mathbb{R}^2}|u|^2\,\mathrm{d}x=m,\\
						u\in H^1(\mathbb{R}^2)\cap D^{1,p}(\mathbb{R}^2),
					\end{cases}
				\end{align*}
				 where $\Delta_p$ denotes the $p$-Laplacian with $1<p<2$, $\lambda\in\mathbb{R}$ represents a Lagrange multiplier and the nonlinerity $f$ exhibits exponential critical growth. Compared to the single-Laplacian case, the lack of regularity here precludes the Pohozaev identity, and the exponential critical growth severely compromises the restoration of compactness. To address these issues, we introduce a refined Moser iteration technique adapted to exponential critical growth, which establishes the Pohozaev identity for weak solutions under the mere assumption of $C_{\mathrm{loc}}^{1,\alpha}$-regularity. By combining constrained minimization on the Pohozaev manifold within a closed $L^2$-ball with a minimax characterization, we prove the existence of normalized groundstates for any prescribed mass $m>0$. Notably, our approach works independently of the sign of the Lagrange multiplier $\lambda$, thereby surmounting the fundamental barrier in recovering compactness for mixed Laplacian problems.

			\medskip {\bf Key words:} Mixed $(p,2)$-Laplacian equations; Normalized groundstates; Exponential critical growth; Pohozaev identity
			
			\medskip {\bf 2020 Mathematics Subject Classification:} 35A15 $\cdot$ 35J92 $\cdot$  35B33  $\cdot$ 35M10 $\cdot$ 35B45
		}
	\end{minipage}
	
	\section{Introduction and main results}

	We are concerned with  the mixed $(p,2)$-Laplacian equation  
	\begin{equation}\label{2pnls}
		\begin{cases}
			-\Delta_p u	-\Delta u+\lambda u=f(u) \quad  \text{ in }\ \mathbb{R}^2 ,\\
			\displaystyle 	\int_{\mathbb{R}^2}|u|^2\dif x=m,
		\end{cases}
	\end{equation}
	where  $m>0$ is given in advance, $\Delta_pu:=\text{div}(|\nabla u|^{p-2}\nabla u)$ denotes the $p$-Laplacian with $1<p<2$, and $\lambda\in \mathbb R$ is part of the unknowns, while serving as a Lagrange multiplier.
	In recent years, the following quasilinear elliptic problem driven by the mixed $(p,q)$-Laplacian has attracted significant interest:
	\begin{equation}\label{eq1.2}
		-\Delta_p u-\Delta_q u + V(x)\abs{u}^{p-2}u + K(x)\abs{u}^{q-2} u=g(x, u) \quad \text{ in } \  \mathbb{R}^N,
	\end{equation}
	where $N\geq2$ and $1<p<q\leq N$. 
	Such stationary equations arise from the general reaction-diffusion system
	\begin{align*}
		\partial_t w=\text{div}(A(\nabla w)\nabla w) + c(x,w) \quad \text{with} \quad A(s)=|s|^{p-2}+|s|^{q-2},
	\end{align*}
	where $w(x,t)\in \mathbb R$ represents the density or concentration of multi-component substances, and $A(s)$ characterizes nonstandard diffusion processes. The inhomogeneous nonlinearity $c(x,w)$ denotes the reaction term incorporating sources and losses, which further accounts for spatially localized potentials or nonuniform media \cite{Fife79}. We also note that the $(p, q)$-Laplacian operator $-\Delta_{p} - \Delta_{q}$ constitutes a specific instance of the widely recognized double-phase operator
	\begin{equation*}
-{\rm div}\left(|\nabla u|^{p - 2}\nabla u + a(x)|\nabla u|^{q - 2}\nabla u\right),
	\end{equation*}
	where $a(x) \geq 0$ is bounded.  Such systems find extensive applications in various physical contexts, including biophysics, plasma physics, chemical reaction modeling; see e.g., \cite{CI05,Derrick60,Aris79}.
	The energy functional associated with this double-phase operator has been extensively studied both in homogenization and elasticity theory for modeling strongly anisotropic materials \cite{Zh86, Zh95}, as well as within the calculus of variations framework \cite{Mar93, MR21}. For further background, we
  refer to \cites{Mar90,Ma89} and the references therein.

	Regarding the unconstrained problem \eqref{eq1.2}, extensive studies have addressed cases both the potentials  $V$ and $K$  admit positive lower bounds; see for instance, \cites{AAM15,Fig11,PW2018, FP21, Amb23,Amb24} and the references therein. Specifically, Figueiredo \cite{Fig11} examined \eqref{eq1.2} with Sobolev critical growth, while Pomponio and Watanabe \cite{PW2018} established the existence of positive solutions for general nonlinearities. Furthermore, Fiscella and Pucci \cite{FP21} tackled the $(p,N)$-Laplacian equation in the exponential critical setting. More recently, Ambrosio \cite{Amb24} investigated  the $(p,q)$-Laplacian equation \eqref{eq1.2} with Berestycki--Lions type nonlinearities, including the limiting case where $q=N$.  For concentration results, we refer to \cites{Amb23,AF11}, while earlier contributions can be found in \cites{CI05,HL08}. Consequently, the natural function space for solutions to \eqref{eq1.2} is $W^{1,p}(\mathbb{R}^N) \cap W^{1,q}(\mathbb{R}^N)$.

	It is worthwhile noting that the zero-mass case, characterized by $V = K \equiv 0$ and $1 <p < q < N$, has been investigated in various studies, including \cites{BBF21,CEM15,FP21}. In this setting, the appropriate function space is $D^{1,p}(\mathbb{R}^N) \cap D^{1,q}(\mathbb{R}^N)$. For the limiting case $q= N$, the functional framework $E^{N,p}$ (see \eqref{combspace} below for details) was introduced in \cite{CFFM2021}, where the existence of solutions was established for the $(p,N)$-Laplacian equation in $\mathbb{R}^N$ with exponential critical growth. Furthermore, in the critical planar case $q=N=2$, the space $E^{2,p}$ was successfully employed in \cite{dCS23} to obtain the minimal energy solution for a Schr\"odinger--Poisson system with a logarithmic integral kernel. We also highlight the work of Liu and Perera \cite{LP24}, who established the multiplicity of solutions for $(p,q)$-Laplacian equations in both Sobolev subcritical and critical cases.

	When $p=q=2$, we are led to the well-understood nonlinear scalar field equation:
	\begin{equation}\label{eqnls}
		\begin{cases}
			-\Delta u +\lambda u =f(u) \quad  \text{ in } \  \mathbb{R}^N,\\
			\displaystyle	\int_{\mathbb{R}^N}\abs{u}^2\dif x=m.
		\end{cases}
	\end{equation}
	If $f$ exhibits $L^2$-subcritical growth (i.e., $f(u)=|u|^{r-2}u$ with $r<2+{4}/{N}$), the existence of a global minimizer follows from minimizing the constrained functional over the prescribed $L^2$-sphere, relying on the coercivity of the functional; see, e.g., \cites{Lions84,CL82}. In contrast, for the $L^2$-supercritical case, the constrained functional fails to be bounded from below. The study of normalized solutions in this regime dates back to the seminal work of Jeanjean \cite{Jeanjean97}, who constructed the mountain pass geometry. A key challenge in this context is proving the boundedness of the Palais--Smale sequence to restore compactness. To circumvent this, Jeanjean developed a variational approach by introducing a stretched functional in an augmented space, thus obtaining a bounded Palais--Smale sequence for which the Pohozaev identity  asymptotically satisfied.
 Recently, Hirata and Tanaka \cite{HT19} further advanced the augmented space theory by establishing a new deformation argument based on a revised Palais--Smale--Pohozaev sequence. We refer to \cites{MS22, MS24, JeanjeanLu2020} for more recent results on the single Laplacian equation \eqref{eqnls}. Notably, the Pohozaev identity, satisfied by all weak solutions, plays a crucial role in the search for normalized groundstates.
Here and in the sequel, ``normalized groundstate'' is understood as a  solution that has least energy among all solutions satisfying the $L^2$-constraint.
	
	In the planar case $p=N=2$, problem \eqref{eqnls} involves a critical nonlinearity with exponential growth due to the Trudinger--Moser inequality, which was first established in \cites{Trudinger67, Moser71} and subsequently advanced  in \cites{Cao92, dFdR02, adachitanaka00,dddS14}.
	We say that a function $f$ meets an exponential critical growth if there exists $\alpha_0 >0 $ such that
	\begin{equation}\label{assexpcri}
		\displaystyle \lim_{t\to \infty} \frac{\abs{f(t)}}{e^{\alpha\abs{t}^2}}=
		0, \quad  \forall \alpha >\alpha_0, \quad \text{and} \quad
		\displaystyle \lim_{t \to \infty} \frac{\abs{f(t)}}{e^{\alpha\abs{t}^2}}=	+\infty, \quad \forall \alpha <\alpha_0.
	\end{equation}
	In this situation, Alves and Ji \cite{AJM22} established the existence and multiplicity of normalized solutions to \eqref{eqnls} in  $\mathbb R^2$ by employing a minimax approach combined with the Trudinger--Moser inequality. Specifically, they assumed $\alpha_0=4\pi$ in \eqref{assexpcri} and  required that $f$ satisfies the Ambrosetti--Rabinowitz growth condition, which is essential for establishing the boundedness of the Palais--Smale sequence. Subsequently, Chang, Liu, and Yan \cite{CLY23} obtained normalized groundstates for any $m>0$ and $\alpha_0>0$ without imposing the Ambrosetti--Rabinowitz condition. Their approach involves a minimization argument on the prescribed closed $L^2$-ball, with references to the earlier work \cites{BM21,MS22} and early version of \cite{DHZ26}. A refined version of problem \eqref{eqnls} is available in a recent work  \cite{MW24}.
	Recently, Dou, Huang and Zhong \cite{DHZ24} demonstrated the existence of normalized solutions for the general $N$-Laplacian equation in $\mathbb R^N$ (for all dimensions $N\geq 2$) under exponential growth conditions and an $L^N$-constraint. 
	 For further results on unconstrained nonlinear scalar field equations with exponential critical growth, we refer to \cites{CQT24,dFMR95, dFR95, doS01, AS07, ASM12, Cao92} and references therein.
	
The following mixed $(p,q)$-Laplacian equation \eqref{pqnls} with $L^p$-constraint has received comparatively little attention:
	\begin{equation}\label{pqnls}
	\begin{cases}
		-\Delta_p u-\Delta_q u+\lambda\abs{u}^{p-2}u=f(u) \quad \text{ in }\  \mathbb{R}^{N},\\
		\displaystyle	\int_{\mathbb{R}^N}\abs{u}^p\dif x=m,\\
		u\in W^{1,p}(\mathbb{R}^N)\cap D^{1,q}(\mathbb{R}^N).
	\end{cases}
\end{equation}
This stands in stark contrast to the extensively studied single Laplacian case.
Baldelli and Yang \cite{BY25} investigated the miexd $(2,q)$ case ($1<q\neq 2<N$) with a homogeneous nonlinearity. More precisely, they established the existence of normalized groundstates in both the $L^2$-subcritical and $L^2$-supercritical regimes, and found infinitely many radial solutions in the latter case. 
Cai and  R\u adulescu  \cite{CR24} subsequently studied the general mixed $(p,q)$-Laplacian equation with an $L^p$ constraint for $1<p<q<N$, where $f$ satisfies a mass-supercritical growth condition; they established normalized groundstates via direct minimization on the Pohozaev manifold within a closed $L^p$-ball; a refinement under weaker assumptions was later provided in \cite{GT26}. Very recently, Huang, Luo and Wang \cite{HLW26} extended the results of \cite{BY25} to the inhomogeneous case, while Ding, Ji and Pucci \cite{DJP25,DJP26} revealed existence and multiplicity results for $(2,q)$-Laplacian equation with $L^2$-constraints, allowing the nonlinearity $f$ to exhibit either a strongly sublinear regime near the origin or a combination of mass-subcritical growth at the origin and mass-supercritical growth at infinity.

	Inspired by the aforementioned literature, we focus on the existence and qualitative properties of normalized solutions to problem \eqref{2pnls}, where the nonlinearity exhibits exponential critical growth. In particular, we emphasize on the effects of the quasilinear operator $-\Delta_p$. As is well known, normalized solutions can be obtained by identifying critical points of the following functional:
	\begin{equation}\label{enerfunct}
		I(u)=\frac{1}{2}\int_{\mathbb{R}^2}\abs{\nabla u}^2 \dif x+\frac{1}{p}\int_{\mathbb{R}^2}|\nabla u|^p \dif x-\int_{\mathbb{R}^2}F(u)\dif x
	\end{equation}
constrained on the $L^2$-sphere
	\begin{align*}
		\mathcal{S}_m=\big\{u\in E : \|u\|_{L^2(\R^2)}^2=m\big\},
	\end{align*}
	where $F(u):=\int_{0}^{u}f(s)\dif s$ and $E:=H^1(\mathbb{R}^2)\cap D^{1,p}(\mathbb{R}^2)$ is endowed with the norm
	\begin{equation*}\|u\|_E :=\big(\|\nabla u\|_{L^2(\R^2)}^2+\|u\|_{L^2(\R^2)}^2+\|\nabla u\|_{L^p(\R^2)}^2\big)^{1/2}.
	\end{equation*}
	
	It is widely recognized that the Pohozaev manifold, denoted by $\mathcal{P}$ and defined precisely in \eqref{Pohozaevmf} below, plays a fundamental role in characterizing groundstates.
	A natural and effective strategy is to minimize the functional restricted to the Pohozaev manifold $\mathcal{P}$ within $\cS_m$, see e.g., \cites{JeanjeanLu2020,Jeanjean97,MS22}.
	The validity of this approach rests on the fact that all normalized solutions of \eqref{eqnls} necessarily lie in $\mathcal{P}_m=\cS_m\cap \cP$.
	 However, for quasilinear elliptic problems, it remains an open question whether the corresponding Pohozaev identity holds, primarily due to the limited regularity of weak solutions (see e.g., \cites{HeLi08,Ma89}). In fact, for problem \eqref{2pnls}, we can derive a $C_{\mathrm{loc}}^{1,\gamma}$ estimate for some $\gamma \in (0,1)$, rather than the higher $C^{2}$ or $C^{2,\gamma}$-regularity, see Theorem \ref{phothm}.
	Although Ambrosio \cite{Amb24} has established  the Pohozaev identity for the limiting case in the context of subcritical exponential nonlinearities, those findings are not applicable to the problem we are considering. This is attributed to the constrained nature of our problem, where the Lagrange multiplier $\lambda>0$ cannot be  predetermined---a factor that is crucial to Ambrosio's proof.

	Nevertheless, we are still able to establish the Pohozaev identity. Our strategy involves first deriving a local $L^\infty_{\text{loc}}(\mathbb{R}^2)$ estimate for the solutions rather than a global $L^\infty(\mathbb{R}^2)$ estimate. This step draws inspiration from a similar Moser iteration procedure described by de Albuquerque, Carvalho, and Silva in \cite{dCS23} (see also \cite{Amb24}), which enables us to obtain $C^{1,\gamma}_{\text{loc}}$-regularity of the solutions by adapting the proof from He and Li \cite{HeLi08}*{Theorem 1}. Finally, by employing techniques established by Degiovanni, Musesti, and Squassina in \cite{DMS03}, which allow us to avoid interchanging the second derivatives in the Hessian matrix of solutions, we successfully derive the corresponding Pohozaev identity, even in the absence of sufficient regularity. We emphasize that our approach is highly flexible and does not require any sign information regarding $\lambda$. This feature enables straightforward adaptation with only minor modifications, to 
	$N$-Laplacian or $(p,N)$-Laplacian equations with exponential critical growth nonlinearity. We believe this aspect holds significant independent  interest.

	\begin{Theorem}\label{phothm}
		Assume that $f$ satisfies \eqref{assexpcri} and $f(t)=o(t)$ as $t\to 0$. If $u\in E$ is a weak solution of  Problem \eqref{2pnls} with $\lambda\in \R$ being fixed, then $u\in C^{1,\gamma}_{{\rm loc}}(\mathbb R^2)$ for some $\gamma\in (0,1)$. Moreover, $u$ satisfies the Pohozaev identity:
		\begin{align*}
			\frac{2-p}{p}\int_{\mathbb R^2}|\nabla u|^p \dif x+\lambda\int_{\mathbb R^2}|u|^2  \dif x=2\int_{\R^2}F(u)\dif x.
		\end{align*}
	\end{Theorem}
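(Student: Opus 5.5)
The proof has three stages: a local $L^\infty$ bound, local $C^{1,\gamma}$ regularity, and then the Pohozaev identity via a truncated dilation test.

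\emph{Stage 1: local boundedness.} Since $u\in H^1(\mathbb R^2)$, the Trudinger--Moser inequality (in the form of Cao or Adachi--Tanaka) gives $e^{\beta u^2}-1\in L^1(\mathbb R^2)$ for every $\beta>0$. The hypotheses \eqref{assexpcri} and $f(t)=o(t)$ then give $|f(u)|\le |u|+C_\beta|u|(e^{\beta u^2}-1)$. The second factor lies in $L^{s}$ for every $s\ge1$, so $f(u)/u\in L^{s}_{\rm loc}$ for some $s>1$, and the equation reads
\[
-\Delta_p u-\Delta u=a(x)u,\qquad a:=\frac{f(u)}{u}-\lambda\in L^{s}_{\rm loc}(\mathbb R^2).
\]
On a ball $B_{2R}$ we run Moser iteration with test functions $\eta^2 u\,u_T^{2(\beta-1)}$, where $u_T=\min\{|u|,T\}$ and $\eta$ is a cutoff. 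The $p$-Laplacian term contributes a nonnegative quantity, so it can be dropped. The $\Delta$ term controls $\|\nabla(\eta u u_T^{\beta-1})\|_2^2$, and the planar Sobolev embedding $H^1_0(B_{2R})\hookrightarrow L^q$ for all $q$, combined with Hölder against $a\in L^s$, closes the iteration. The result is $u\in L^\infty(B_R)$ with a bound depending only on local norms.

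\emph{Stage 2: $C^{1,\gamma}_{\rm loc}$.} Once $u$ is locally bounded, the right-hand side $f(u)-\lambda u$ is locally bounded. The operator $-\operatorname{div}(|\nabla u|^{p-2}\nabla u+\nabla u)$ has the structure treated by He--Li \cite{HeLi08}*{Theorem 1} (Lieberman-type growth with $A(s)=s^{p-2}+1$, uniformly elliptic for large gradients). Their result therefore gives $u\in C^{1,\gamma}_{\rm loc}$. A difference-quotient argument also gives $u\in W^{2,2}_{\rm loc}$; this holds because the $\Delta$ part is uniformly elliptic and the $p$-part is monotone. This $W^{2,2}_{\rm loc}$ regularity may be needed in Stage 3.

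\emph{Stage 3: Pohozaev identity.} This is the main obstacle, because we lack $C^2$ regularity and cannot integrate by parts in the usual way. We follow Degiovanni--Musesti--Squassina \cite{DMS03}: their result gives a Pohozaev-type identity for $C^1$ solutions of $-\operatorname{div}(\nabla_\xi L(\nabla u))=g(u)$ with $L$ convex and $C^1$ in $\xi$. The identity is obtained by testing with $x\cdot\nabla u\,\varphi(x/R)$ after approximating $L$ or using a Serrin-type argument, without interchanging second derivatives. We apply it with $L(\xi)=\frac12|\xi|^2+\frac1p|\xi|^p$ and $G(t)=F(t)-\frac\lambda2t^2$, with cutoffs $\varphi_R$. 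In dimension $N=2$ this yields
\[
\int\Big[\nabla_\xi L(\nabla u)\cdot\nabla u-2L(\nabla u)\Big]\varphi_R+2\int G(u)\varphi_R=\text{error}_R .
\]
The error consists of terms $\frac1R\int_{R<|x|<2R}|x|\,(|\nabla u|^2+|\nabla u|^p+|G(u)|)$, which tend to $0$ as $R\to\infty$ because these integrands are integrable. For $|\nabla u|^p$ this uses $u\in D^{1,p}$; for $|G(u)|$ it uses Trudinger--Moser again. Since $\nabla_\xi L\cdot\xi-2L=(1-\frac2p)|\xi|^p$, letting $R\to\infty$ gives $\frac{2-p}{p}\int|\nabla u|^p+\lambda\int u^2=2\int F(u)$. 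No sign condition on $\lambda$ enters anywhere.
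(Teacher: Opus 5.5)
Your plan has the same architecture as the paper's proof. You get a local $L^\infty$ bound by Moser iteration, then $C^{1,\gamma}_{\rm loc}$ from He--Li once $f(u)-\lambda u$ is locally bounded. You then apply the Degiovanni--Musesti--Squassina identity for $\mathcal{L}(\xi)=\frac12|\xi|^2+\frac1p|\xi|^p$ with the field $\varphi(x/R)\,x$; the paper writes $\varphi(Rx)\,x$ and lets $R\to0$. The boundary terms vanish by dominated convergence, and no sign of $\lambda$ is used.

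The real difference is in Stage 1, and it exposes one incorrect step in your write-up. The paper uses no cutoff function. It restricts the truncated powers $u|u|^{p(\beta-1)}$ and $u|u|^{2(\beta-1)}$ to a compact set $\Omega$ and extends them by zero. It tests with each, discards the other operator's nonnegative contribution, and combines the bounds on $\int|\nabla v_m|^p$ and $\int|\nabla v_m|^2$ through $E^{2,p}\hookrightarrow L^r$. You instead treat $a=f(u)/u-\lambda$ as an $L^s_{\rm loc}$ potential, use only the Dirichlet term for the Sobolev gain, and localize with a smooth cutoff $\eta$. That is the more careful localization: extension by zero creates a jump across $\partial\Omega$ wherever $u\neq0$, so the paper's test functions are not in $E$ in general.

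But the cutoff breaks your claim that the $p$-Laplacian term is nonnegative and can be dropped. With the test function $\eta^2u\,u_T^{2(\beta-1)}$, only the part where the gradient falls on $u\,u_T^{2(\beta-1)}$ is nonnegative. The remaining piece
\[
2\int_{\mathbb R^2}\eta\,u\,u_T^{2(\beta-1)}\,|\nabla u|^{p-2}\nabla u\cdot\nabla\eta\,\mathrm{d}x
\]
has no sign. The repair is routine:
\begin{itemize}
\item use $|\nabla u|^{p-1}\le1+|\nabla u|$;
\item absorb the $|\nabla u|$ part into $\int\eta^2u_T^{2(\beta-1)}|\nabla u|^2$ by Young's inequality, exactly as for the linear cross term;
\item bound what is left by $C\int_{\operatorname{supp}\eta}\bigl(1+u^2u_T^{2(\beta-1)}\bigr)$, an additive constant the iteration tolerates.
\end{itemize}

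Your side remark that $u\in W^{2,2}_{\rm loc}$ is correct, and it gives something the paper does not use. The field $A(\xi)=\xi+|\xi|^{p-2}\xi$ satisfies $\langle A(a)-A(b),a-b\rangle\ge|a-b|^2$. The cutoff cross term can be moved onto the locally bounded field $A(\nabla u)$ by a discrete integration by parts, so the difference-quotient estimate closes. With $D^2u\in L^2_{\rm loc}$ you could test directly with $\varphi_R\,x\cdot\nabla u$, which is $H^1$ with compact support and hence in $E$. The chain rule $\nabla[\mathcal{L}(\nabla u)]=D^2u\,\nabla_\xi\mathcal{L}(\nabla u)$ then gives the same identity without the Degiovanni--Musesti--Squassina lemma. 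That lemma, which both you and the paper end up invoking, needs only local Lipschitz continuity, so the $W^{2,2}_{\rm loc}$ step is optional.
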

	
	Let $H(s):=f(s)s-2F(s)$. We now introduce the Pohozaev functional $P: E \to \mathbb{R}$ as follows:
	\begin{align*} 
		P(u):=\int_{\mathbb R^2}|\nabla u |^2 \dif x+2\big(1-\frac{1}{p}\big)\int_{\mathbb R^2}|\nabla u|^p \dif x -\int_{\mathbb{R}^2}H(u)\dif x.
	\end{align*}
	We then define the corresponding manifolds $\mathcal{P}$ and $\mathcal{P}_m$ by,
	\begin{align}\label{Pohozaevmf}
		\mathcal{P}:=\{u\in E\setminus\{0\}: P(u)=0\}, \quad \mathcal{P}_m:=\{u\in \mathcal{S}_m: P(u)=0\},
	\end{align}
	
	Given our focus on the normalized groundstates, we  are naturally led to consider the minimization problem
	\begin{equation}\label{minipoho}
		\gamma_m:=\inf_{u\in\cP_m}I(u).
	\end{equation}
The monotonicity of $\gamma_m$ with respect to $m$ plays a crucial role in restoring $L^2$-compactness in the mass-supercritical regime; see, e.g., \cites{JeanjeanLu2020,Jeanjean97}. However, this property is difficult to establish under exponential critical growth, as it typically requires the inequality $f(t)t\leq p^*F(t)$ to hold, where $p^*:={Np}/(N-p)$ is the Sobolev critical exponent associated with $-\Delta_p$. Unfortunately, this equality fails in the exponential critical setting.

Adopting a strategy analogous to that employed in \cites{BM21,MS22,DHZ26}, we search for minimizers of  $I$ on $\cB_m \cap \mathcal{P}$  rather than $\cS_m\cap\mathcal{P}$, with $\cB_m$ being the closed $L^2$-ball 
	\begin{align*}
		\mathcal{B}_m:=\big\{u\in E\setminus\{0\}: \|u\|_{L^2(\R^2)}^2\leq m\big\}.
	\end{align*}
Accordingly, we  formulate the minimization problem
	\begin{equation}\label{minipro}
		\varUpsilon_m:=\inf\limits_{u\in\mathcal{B}_m\cap\mathcal{P}}I(u).
	\end{equation}
	
	To state our  result, we impose the following assumptions on the nonlinearity $f$:
	\begin{enumerate}
		\item [$(f_1)$] $f \in C^1(\mathbb{R},\mathbb{R})$ is odd and $f(t)=o(t^{3})$ as $t \to 0$;
		\item [$(f_2$)] there exists $\alpha_0 >0 $ such that $f$ satisfies \eqref{assexpcri};
		\item [($f_3$)] for all $t\in \mathbb R$, there holds $h(t)t \geq 4H(t)$ where $h(t):= H'(t)$;
		\item [($f_4$)] there exist $\xi > 0$ and $\nu > 4$ such that
		\begin{equation*}
			\operatorname{sgn}(t) f(t) \geq \xi \abs{t}^{\nu-1}, \quad \forall t \in \mathbb{R},
		\end{equation*}
		with the notation $\operatorname{sgn}(\cdot)$ being the sign function.
	\end{enumerate}
	\begin{Theorem}\label{thm1}
		Assume that $1<p<2$ and  $f$ satisfies $(f_1)$--$(f_4)$. Then
		for any $m>0$, there exists ${\xi}_0>0$ such that Problem \eqref{2pnls} admits a normalized groundstate $u\in E$ for some $\lambda >0$ whenever  $\xi\geq {\xi}_0$. In particular, $u$ is radially symmetric and satisfies 
		\begin{equation*}
			I(u)=\varUpsilon_m=\gamma_m.
		\end{equation*}
	\end{Theorem}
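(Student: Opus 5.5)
We will prove Theorem \ref{thm1} by minimizing $I$ over $\cB_m\cap\cP$ and then showing that the minimizer saturates the constraint. First we study the fiber map. For $u\in E\setminus\{0\}$ set $u_t(x):=t\,u(tx)$, so that $\|u_t\|_{L^2}=\|u\|_{L^2}$ and
\begin{equation*}
I(u_t)=\frac{t^2}{2}\|\nabla u\|_2^2+\frac{t^{2p-2}}{p}\|\nabla u\|_p^p-t^{-2}\int_{\R^2}F(tu)\dif x .
\end{equation*}
One checks that $\frac{d}{dt}I(u_t)=t^{-1}P(u_t)$. Assumption $(f_3)$ says that $H(s)/s^4$ is nondecreasing, and $(f_4)$ makes $H$ superquartic. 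Together with $2p-2<2$, these give a unique $t_u>0$ with $u_{t_u}\in\cP$, and $t_u$ is a strict global maximum of $t\mapsto I(u_t)$. Hence $\cP\cap\cB_m\neq\emptyset$ and
\begin{equation*}
\varUpsilon_m=\inf_{u\in\cB_m}\max_{t>0}I(u_t).
\end{equation*}
By $(f_1)$ and the Trudinger--Moser inequality (valid in the small-gradient regime), $P(u)\ge c\|\nabla u\|_2^2$ whenever $\|\nabla u\|_2$ is small, so $\inf_{\cP\cap\cB_m}\|\nabla u\|_2>0$. On $\cP$ we also have
\begin{equation*}
I(u)=I(u)-\tfrac14P(u)=\tfrac14\|\nabla u\|_2^2+\big(\tfrac1p-\tfrac{p-1}{2p}\big)\|\nabla u\|_p^p+\tfrac14\int_{\R^2}\big(H(u)-4F(u)\big)\dif x .
\end{equation*}
Here $H-4F\ge0$ follows from $(f_3)$ by integrating. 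Thus $I$ is coercive on $\cP\cap\cB_m$ and $\varUpsilon_m>0$.

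The key quantitative step, and the role of $\xi_0$, is the upper bound
\begin{equation*}
\varUpsilon_m<\frac{\pi}{\alpha_0}\quad\text{(up to the constant dictated by the critical Trudinger--Moser level).}
\end{equation*}
To obtain it, fix a radial test function $w\in\cS_m$. By $(f_4)$, $\max_t I(w_t)\le \max_t\big[\frac{t^2}{2}A+\frac{t^{2p-2}}{p}B-\frac{\xi}{\nu}t^{\nu-2}C\big]$, and the right-hand side tends to $0$ as $\xi\to\infty$; this determines $\xi_0$. With this bound, a minimizing sequence can be taken radial (Schwarz symmetrization preserves $\cB_m$, decreases both gradient norms, and keeps $\int F$; one then projects back to $\cP$ by the fiber map). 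This sequence is bounded in $E$ by coercivity, and $\limsup\|\nabla u_n\|_2^2<4\pi/\alpha_0$ follows from the energy identity above. Then $\int F(u_n)\to\int F(u)$ and $\int H(u_n)\to\int H(u)$ by the radial compact embedding together with a Trudinger--Moser uniform-integrability argument. The limit $u$ is nonzero, since otherwise $P(u_n)\to0$ would force $\|\nabla u_n\|_2\to0$, contradicting the lower bound above. We have $\|u\|_2^2\le m$ and $P(u)\le0$, so $t_u\le1$, and by weak lower semicontinuity $I(u_{t_u})\le\liminf I(u_n)$. Therefore $u_{t_u}$ attains $\varUpsilon_m$.

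Next we turn the minimizer into a solution; we expect this to be the main obstacle. The minimizer $u$ is a critical point of $I$ on $\cP$ under the inequality constraint $\|u\|_2^2\le m$. A Lagrange multiplier argument first removes the multiplier attached to $P$ (via the nondegeneracy $\frac{d}{dt}P(u_t)|_{t=1}<0$). It then yields $-\Delta_pu-\Delta u+\lambda u=f(u)$ with $\lambda\ge0$, and $\lambda=0$ if $\|u\|_2^2<m$. To exclude $\lambda=0$, apply Theorem \ref{phothm} (note that $(f_1)$ implies $f(t)=o(t)$), which gives
\begin{equation*}
\frac{2-p}{p}\|\nabla u\|_p^p+\lambda m'=2\int_{\R^2}F(u)\dif x,\qquad m'=\|u\|_2^2 .
\end{equation*}
Combining this with $P(u)=0$ when $\lambda=0$ gives
\begin{equation*}
\|\nabla u\|_2^2+\|\nabla u\|_p^p=\int_{\R^2}f(u)u\dif x .
\end{equation*}
Playing the two identities against $(f_3)$, which implies $H\ge2F\ge0$ and $f(t)t\ge4F(t)$, should produce a contradiction. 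If this comparison is not sharp enough, the fallback is to use that $\varUpsilon_m$ is nonincreasing in $m$ by construction and to perturb the mass slightly at an interior minimizer to lower $I$. Hence $\lambda>0$ and $\|u\|_2^2=m$.

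Finally, we compare the two minimization problems. Every normalized solution lies in $\cP_m$ by Theorem \ref{phothm}, and $\cP_m\subset\cP\cap\cB_m$, so $\varUpsilon_m\le\gamma_m\le I(u)=\varUpsilon_m$. This gives $I(u)=\gamma_m=\varUpsilon_m$, and $u$ is a radial normalized groundstate.
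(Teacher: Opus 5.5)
Your overall plan (radial minimization on $\cB_m\cap\cP$, then showing the minimizer lies on $\cS_m$ with $\lambda>0$) matches the paper's. However, the step that is supposed to give compactness is false. The inequality $H-4F\ge0$ does not follow from $(f_3)$. Integrating $(F(t)/t^2)'=H(t)/t^3$ and using that $H(s)/s^4$ is nondecreasing yields only $F\le H/2$, i.e.\ $f(t)t\ge4F(t)$; this is $K\ge0$ in Lemma~\ref{lem2.6}. For example, $f(t)=\xi|t|^3t\,e^{\alpha_0t^2}$ satisfies $(f_1)$--$(f_4)$ with $\nu=5$, yet $f(t)t-6F(t)=-\frac{\xi}{5}|t|^5+O(|t|^7)$ as $t\to0$. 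What survives on $\cP$ is $I(u)=I(u)-\frac12P(u)=\frac{2-p}{p}\|\nabla u\|_p^p+\frac12\int_{\R^2}K(u)\dif x$, which controls $\|\nabla u\|_p$ but not $\|\nabla u\|_2$. So an upper bound on $\varUpsilon_m$ gives neither boundedness of your minimizing sequence in $E$ nor $\limsup_n\|\nabla u_n\|_2^2<4\pi/\alpha_0$. Without these, the Trudinger--Moser uniform integrability has nothing to rest on. This is exactly what Lemmas~\ref{lem3.3} and~\ref{lem3.4} supply. They rescale $\omega_n=(-s_n)\star u_n$ to unit $E^{2,p}$-size and use a dichotomy: non-vanishing drives the energy to $-\infty$ via $(f_4)$, while vanishing bounds $\varUpsilon_m$ from below, contradicting $\varUpsilon_m\to0^+$. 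The comparison $I(u_n)\ge I((t_0-t_k)\star u_n)$ then gives $\|\nabla u_n\|_2^2\le\pi/(2\alpha_0)$.

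The second gap is the exclusion of $\lambda=0$, which also covers an interior minimizer. With $\lambda=0$, the Pohozaev identity of Theorem~\ref{phothm} plus $P(u)=0$ reproduces exactly the Nehari identity you write, which every weak solution satisfies anyway. Combined with $f(t)t\ge4F(t)$ it only gives $\|\nabla u\|_2^2\ge\frac{4-3p}{p}\|\nabla u\|_p^p$, so no contradiction can come from it. The fallback also fails: at an interior minimizer with $\lambda=\mu=0$ one has $I'(u)=0$, so the first variation in the mass direction vanishes, and non-strict monotonicity of $m\mapsto\varUpsilon_m$ says nothing.

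The missing idea is to bring in the energy level. Theorem~\ref{phothm} and $I(u)=\varUpsilon_m$ give $\lambda\|u\|_2^2+2\varUpsilon_m=\|\nabla u\|_2^2+\|\nabla u\|_p^p$. Hence $\lambda\le0$ forces $\|\nabla u\|_2^2\le2\varUpsilon_m$, which contradicts your lower bound $\inf_{\cP\cap\cB_m}\|\nabla u\|_2>0$ once $\varUpsilon_m$ is small compared with it. This is where the paper uses largeness of $\xi$ a second time (Step~3 of Lemma~\ref{equalvalues} and the end of Section~\ref{sect4}).

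Finally, removing the multiplier $\mu$ of $P$ ``via nondegeneracy'' presupposes testing the Euler--Lagrange equation with $\partial_tu_t|_{t=1}=u+x\cdot\nabla u$, which need not lie in $E$. One needs instead the Pohozaev identity for $-(1+2\mu)\Delta u-(1+2(p-1)\mu)\Delta_pu+\lambda u=f(u)+\mu h(u)$. That requires first ruling out that $1+2\mu$ and $1+2(p-1)\mu$ have opposite signs, since otherwise the operator is not elliptic. The paper does this sign analysis in the interior case and bypasses $\mu$ on $\cS_m$ via the $C^1$ functional $\varPsi$ and Ghoussoub's minimax principle (Lemmas~\ref{comp1} and~\ref{Ps1}). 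Your KKT sign $\lambda\ge0$ would in fact let the sign argument around \eqref{pmcontra} work directly at a minimizer on $\cS_m$, since it only adds $-\lambda\|u\|_2^2\le0$ to the right-hand side. But this has to be carried out, not asserted.
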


In contrast to the single Laplacian equation \eqref{eqnls} with exponential critical nonlinearity, the existence of normalized groundstates for Problem \eqref{2pnls} can still be established for arbitrary $\alpha_0 > 0$ and $m > 0$ under the same exponential critical growth conditions considered in \cite{CLY23}.
This strengthened result underscores the dominant role of the $2$-Laplacian operatorn and broadens the applicability of our findings. As evidenced by our analysis, the $\Delta_p$ operator has no effect on the existence of solutions in our $L^2$-supercritcal setting near origin and exponential critical growth at infinity, confirming the decisive role of the $2$-Laplacian operator. 
This is reminiscent of the recent work \cite{ZZL25}, where the $L^q$-constraint matches the larger exponent $q$ of the mixed $(2,q)$-Laplacian operator ($2<q<3=N$), leading to the dominant role of the $q$-Laplacian operator.

Unlike the single Laplacian setting, the main analytical challenge arises from simultaneously handling the quasilinear term and exponential critical nonlinearity.
To address these issues, we restrict our analysis to the function space 
 $H^1(\mathbb{R}^2) \cap D^{1,p}(\mathbb{R}^2)$,
	which guarantees the continuous embedding of $E^{2,p}$ into $L^r(\mathbb{R}^2)$ for $r>2p/(2-p)$ (see Lemma \ref{lem2.2} below). This embedding enables us to apply a new version of Trudinger--Moser inequality, see Lemma \ref{lem2.1}. Following \cites{BM21,MS22,DHZ26}, we solve the minimization problem \eqref{minipro} on $\mathcal{B}_m \cap \mathcal{P}$, and the solution minimizes on $\cP_m=\mathcal{S}_m \cap \mathcal{P}$ as well, see Lemma \ref{equalvalues}. This ensures the existence of $u \in \mathcal{S}_m$ and Lagrange multipliers $\lambda, \mu \in \mathbb{R}$ satisfying
\begin{equation*}
	I'(u)+ \lambda u + \mu P'(u)=0.
\end{equation*}
	However, it remains challenging to prove $\mu = 0$, where $\mu$ denotes the Lagrange multiplier corresponding to the constraint $P(u)=0$. This difficulty stems from the presence of the quasilinear operator $\Delta_p$, which in turn results in a lack of information regarding the sign of $\lambda$. To overcome this, we adopt a technique from \cites{SW1,SW2} extended to the constrained framework in \cite{JeanjeanLu2020}, which enables us to establish a minimax characterization of $\varUpsilon_m$ and construct a Palais--Smale sequence in $\cP_m$ via the dual variational principle. This specially designed minimizing sequence is in fact a Palais--Smale sequence for the energy functional constrained on the $L^2$-sphere. As we will show, this sequence exhibits strong convergence, from which we can deduce the existence of a groundstate without explicitly invoking the Lagrange multiplier $\mu$ or knowing the sign of $\lambda$. Here $\lambda$ denotes the associated Lagrange multiplier with respect to the $L^2$-constraint.
	
	The paper is organized as follows. 
	In Section~\ref{sect2},  we provide some preliminaries and derive the 
$C_{\mathrm{loc}}^{1,\alpha}$-regularity for weak solutions of \eqref{2pnls}, then prove Theorem \ref{phothm} by establishing the Pohozaev identity under the sole assumption of $ C_{\mathrm{loc}}^{1,\alpha} $ -regularity. Section~\ref{sect3} is devoted to analyzing the asymptotic behavior of the groundstate energy level $\varUpsilon_{m}$, establishing an upper bound for the minimizing sequences associated with  $\varUpsilon_{m}$, and demonstrating that $\varUpsilon_m=\gamma_m$ for largely $\xi$.
	Finally, in Section~\ref{sect4}, we establish the minimax characterization of $\varUpsilon_m$ for largely $\xi$ and complete the proof of Theorem~\ref{thm1}.

	\vskip 0.1in

	\textbf{Notations}:	We denote by $C_0$, $C_1$, $...$ the generic positive constants, which may vary from line to line.    $\|\cdot\|_r$ stands for the  standard norm in $L^r(\mathbb R^2)$ ($1\leq r<\infty$).
	Finally, $o_n(1)$ means that $o_n(1)\to 0$ as $n\to \infty$.

	\section{Regularity and Pohozaev identity}
	\label{sect2}
	
	For $r\in (1,N)$, we  denote by $D^{1,r}(\mathbb R^N)$ the closure of $C^\infty_c(\mathbb R^N)$, the space of smooth functions with compact support in $\mathbb{R}^N$, with respect to the norm $\|\nabla u\|_r$.
	We also recall the function space $E^{N,r}$, which is defined as the completion of $C_c^{\infty}(\mathbb{R}^N)$   endowed with the following combined norm:
	\begin{equation}\label{combspace}
		\|u\|_{N,r}=\left(\|\nabla u\|_N^N +\|\nabla u\|_r^N \right)^{1/N}.
	\end{equation}
	For $p\in (1,2)$, we recall the Sobolev critical exponent $p^*:=2p/(2-p)$ of the space $D^{1,p}(\mathbb R^2)$.
	Note that $E=H^{1}(\mathbb R^2)\cap D^{1,p}(\mathbb R^2)\subset E^{2,p}$.
	We summarize the embedding results from $E^{2,p}$ and $E$ to the Lebesgue space $L^r(\mathbb R^2)$. The proof of these embedding results can be completed using the Gagliardo--Nirenberg inequality and interpolation techniques.  We refer to  \cite{PW2018}*{Theorem 2.1} and \cite{CFFM2021}*{Proposition 2.1} for details and references. 
	\begin{lemma}\label{lem2.2}
		The embedding $E^{2,p}\hookrightarrow L^r(\mathbb{R}^2)$ is continuous for any $r\in [p^*,\infty)$. Additionally, the embedding $E^{2,p}_r:=\{u \in E : u \text{ is  radial}\}\hookrightarrow L^r(\mathbb{R}^2)$ is compact for any $r\in (p^*,\infty)$.
		Furthermore,  the embedding $E\hookrightarrow L^r(\mathbb R^N)$ is also continuous for any $r\in[2,\infty)$ and $E_r=\{u \in E : u \text{ is  radial}\}\hookrightarrow L^r(\mathbb{R}^2)$ is compact for any $r\in (2,\infty)$.
	\end{lemma}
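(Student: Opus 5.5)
We prove Lemma \ref{lem2.2} by combining Sobolev-type inequalities on the two pieces of the norm with interpolation, and we get compactness from a Strauss-type radial decay estimate.

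\emph{Continuity for $E^{2,p}$.} For $u\in C_c^\infty(\mathbb R^2)$ the Sobolev inequality in $D^{1,p}(\mathbb R^2)$ gives $\|u\|_{p^*}\le C\|\nabla u\|_p$. This settles the endpoint $r=p^*$.

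For $r>p^*$ we use the Gagliardo--Nirenberg inequality in the planar form, applied to $|u|^{s}$ with $s\ge1$ chosen suitably. The basic planar inequality is $\|v\|_2\le C\|\nabla v\|_1$. Applying it to $v=|u|^{s}$ and using Hölder gives
\[
\|u\|_{2s}^{s}\le Cs\int|u|^{s-1}|\nabla u|\,dx\le Cs\|u\|_{2(s-1)}^{s-1}\|\nabla u\|_2 .
\]
Starting from $2(s-1)=p^*$ and iterating, every $L^r$ norm with $r\in[p^*,\infty)$ is controlled by powers of $\|\nabla u\|_p$ and $\|\nabla u\|_2$. The resulting bound has the form $\|u\|_r\le C(\|\nabla u\|_p^{\theta}\|\nabla u\|_2^{1-\theta})$ with $\theta\in(0,1]$. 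Young's inequality turns this into $\|u\|_r\le C\|u\|_{2,p}$, and density extends the estimate from $C_c^\infty$ to all of $E^{2,p}$.

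\emph{Continuity for $E$.} Here $\|u\|_2$ is part of the norm, so the planar Ladyzhenskaya/Gagliardo--Nirenberg inequality $\|u\|_r\le C\|u\|_2^{2/r}\|\nabla u\|_2^{1-2/r}$ immediately gives $E\hookrightarrow L^r$ for all $r\in[2,\infty)$. Equivalently, this is the continuity of $H^1(\mathbb R^2)\hookrightarrow L^r$, since $E\subset H^1(\mathbb R^2)$ with a stronger norm.

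\emph{Compactness for radial functions.} In the $E$ case, $E_r$ embeds continuously in $H^1_{\rm rad}(\mathbb R^2)$, and Strauss' compactness lemma yields compactness into $L^r$ for $2<r<\infty$.

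In the $E^{2,p}_r$ case we proceed in three steps.
\begin{enumerate}
\item \emph{Decay estimate.} For a radial $u$ we write $|u(\rho)|^{k}\le k\int_\rho^\infty|u|^{k-1}|u'|\,dt$ and apply Hölder with the weight $t\,dt$. This gives a pointwise bound $|u(x)|\le C|x|^{-\beta}\|u\|_{2,p}$ for $|x|\ge1$, for some $\beta>0$ depending on $p$. This is a radial lemma in the spirit of Strauss, adapted to $D^{1,p}\cap D^{1,2}$.
\item \emph{Vanishing tails.} Take a bounded sequence $u_n$ with $u_n\rightharpoonup u$. The decay estimate gives $\int_{|x|>R}|u_n|^r\le \sup_{|x|>R}|u_n|^{r-p^*}\int|u_n|^{p^*}\to0$ uniformly in $n$ as $R\to\infty$. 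This is exactly where the strict inequality $r>p^*$ is needed.
\item \emph{Interior convergence.} On the ball $B_R$, Rellich's theorem for $W^{1,p}(B_R)$ gives strong convergence in $L^r(B_R)$, using also the uniform $L^{q}$ bounds from the continuity part for large $q$ and interpolation.
\end{enumerate}

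The main obstacle is obtaining a uniform radial decay rate in $E^{2,p}_r$, where no $L^2$ control is available. If the direct Strauss computation fails, we instead cite \cite{CFFM2021}*{Proposition 2.1} and \cite{PW2018}*{Theorem 2.1}, as the paper indicates.
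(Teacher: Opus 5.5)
Your proposal is correct and follows the route the paper indicates. The paper gives no proof of its own, only the hint ``Gagliardo--Nirenberg plus interpolation'' and two references; your argument uses that hint for the continuous embeddings and the standard radial-decay, uniform-tail and local Rellich argument for compactness, which is presumably what those references carry out. The decay step you were unsure about does go through without $L^2$ control: for radial $u$, $|u(\rho)|\le\int_\rho^\infty|u'(t)|\,dt$ together with H\"older's inequality in the weight $t\,dt$ gives $|u(x)|\le C|x|^{-(2-p)/p}\|\nabla u\|_p$, since $\int_\rho^\infty t^{-1/(p-1)}\,dt$ is finite exactly when $p<2$, so the fallback citation is unnecessary.
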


	Following \cite{CFFM2021}*{Theorem 1.1},  we  introduce a version of the Trudinger--Moser inequality in $\mathbb{R}^2$. This advancement is significant because it provides bounds on integrals involving exponential-type functions, which in turn ensures the well-definedness of the energy functional. More precisely, we define the Young function $\varPhi_{\alpha, j_0}:\mathbb R\to \mathbb R$ by
	\begin{align*}
		\varPhi_{\alpha, j_0}(s):=e^{\alpha|s|^2}-\sum_{j=0}^{j_0-1}\frac{\alpha^j}{j!}|s|^{2j},
	\end{align*}
	where  $\alpha>0$ and   $j_0:=\inf\left\{j\in \mathbb{N} \st j\geq {p^*}/{2}\right\}$.

	\begin{lemma}\label{lem2.1}
		Suppose that $1<p<2$ and $u\in E^{2,p}$. Then for any $\alpha>0$, one has
		\begin{align*}
			\int_{\mathbb{R}^2} \varPhi_{\alpha,j_0}(u)\dif x<+\infty.
		\end{align*}
		Moreover, if $0<\alpha<4\pi$ and $\|u\|_{2,p}\leq 1$, then there exists a constant $C(\alpha)>0$ such that
		\begin{align*}
			\int_{\mathbb{R}^2} \varPhi_{\alpha,j_0}(u)\dif x\leq C(\alpha).
		\end{align*}
		In particular,  if $0<\alpha<4\pi$, $\|\nabla u\|_{2}\leq 1$, and $\|u\|_2\leq M<+\infty$ with $M>0$, then there exists a constant $C(M,\alpha)>0$ such that
		\begin{align*}
			\int_{\mathbb{R}^2} (e^{\alpha u^2}-1)\dif x\leq C(M,\alpha).
		\end{align*}
	\end{lemma}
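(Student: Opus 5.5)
The plan is to deduce Lemma \ref{lem2.1} from the Trudinger--Moser inequality in $E^{N,p}$ of \cite{CFFM2021}*{Theorem 1.1}. I would give a direct argument via the series expansion of $\varPhi_{\alpha,j_0}$ and a truncation, reducing to the classical Ruf / Cao / Adachi--Tanaka inequality on $H^1(\mathbb R^2)$.

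\emph{Step 1: finiteness for any $\alpha>0$.} Write
\[
\varPhi_{\alpha,j_0}(u)=\sum_{j\ge j_0}\frac{\alpha^j}{j!}|u|^{2j}.
\]
Since $2j\ge 2j_0\ge p^*$, Lemma \ref{lem2.2} shows that every term is integrable. To sum the series, split $u$ by density: given $\varepsilon>0$, choose $v\in C_c^\infty$ with $\|u-v\|_{2,p}<\varepsilon$. Then $|u|^2\le(1+\delta)|u-v|^2+C_\delta|v|^2$. By Young's inequality,
\[
\varPhi(u)\lesssim \varPhi_{2\alpha(1+\delta)}(u-v)+\varPhi_{2\alpha C_\delta}(v).
\]
Here $\varPhi_{2\alpha C_\delta}(v)$ is compactly supported and bounded. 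The term $\varPhi_{2\alpha(1+\delta)}(u-v)$ reduces to the second statement after rescaling $w=(u-v)/\|u-v\|_{2,p}$, because $2\alpha(1+\delta)\varepsilon^2<4\pi$ once $\varepsilon$ is small.

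\emph{Step 2: uniform bound when $\|u\|_{2,p}\le1$ and $\alpha<4\pi$.} This is the core step. Split $\mathbb R^2$ into $\{|u|\le1\}$ and $\{|u|>1\}$. On $\{|u|\le 1\}$ we have $\varPhi_{\alpha,j_0}(u)\le C(\alpha)|u|^{2j_0}\le C|u|^{p^*}$, and this integral is bounded by $C\|u\|_{2,p}^{p^*}$ via the Gagliardo--Nirenberg embedding in Lemma \ref{lem2.2}. The set $A=\{|u|>1\}$ has $|A|\le\int|u|^{p^*}\le C$. Consider $w=(|u|-1)^+$, which is supported in $A$ with $\|\nabla w\|_2\le1$. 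Then $|u|^2\le(1+\eta)w^2+C_\eta$ on $A$, so
\[
\int_A e^{\alpha u^2}\le C\int_A e^{\alpha(1+\eta)w^2}.
\]
Choosing $\eta$ so that $\alpha(1+\eta)<4\pi$, the Moser inequality on a domain of bounded measure (or Cao's inequality, using $\|w\|_2^2\le\int_A u^2$, which is controlled by Hölder and $\|u\|_{p^*}$) gives a bound depending only on $\alpha$.

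\emph{Step 3: the last assertion.} Here $\|\nabla u\|_2\le1$ and $\|u\|_2\le M$, so $u\in H^1$ and Cao's inequality \cite{Cao92} applies directly: for $\alpha<4\pi$, $\int(e^{\alpha u^2}-1)\le C(\alpha)\|u\|_2^2\le C(M,\alpha)$. The only difference is that $j_0$ may exceed $1$; the lower-order terms are bounded via $\|u\|_{2j}^{2j}\le C\|\nabla u\|_2^{2j-2}\|u\|_2^2$.

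The main obstacle is Step 2: controlling the low-level set uniformly, which needs the sharp constant in the embedding $E^{2,p}\hookrightarrow L^{p^*}$, and handling the bounded-measure reduction while keeping $\alpha(1+\eta)<4\pi$.
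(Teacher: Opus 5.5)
Your argument is correct, but it does not follow the paper, because the paper gives no proof of this lemma. It takes the first two assertions from \cite{CFFM2021}*{Theorem 1.1}, and the last one is just the Cao / Adachi--Tanaka inequality in $H^1(\mathbb R^2)$ \cites{Cao92,adachitanaka00}.

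You instead derive the $E^{2,p}$ statements from that classical $H^1$ inequality together with the Sobolev embedding $D^{1,p}(\mathbb R^2)\hookrightarrow L^{p^*}(\mathbb R^2)$. The embedding does three jobs:
\begin{itemize}
\item it controls $\int_{\{|u|\le 1\}}\varPhi_{\alpha,j_0}(u)$, which is exactly where $2j_0\ge p^*$ is used;
\item it bounds $|\{|u|>1\}|$;
\item it bounds $\|(|u|-1)^+\|_2$, which replaces the $L^2$ norm that $E^{2,p}$ does not control.
\end{itemize}
Then $u^2\le(1+\eta)w^2+C_\eta$ on $\{|u|>1\}$, with $\alpha(1+\eta)<4\pi$, closes the uniform bound, and density plus rescaling gives finiteness for every $\alpha>0$. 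Your route is self-contained and shows why $j_0$ is defined as it is and where $p<2$ enters. The citation is only shorter.

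A few corrections to the write-up:
\begin{itemize}
\item No sharp constant in $E^{2,p}\hookrightarrow L^{p^*}$ is needed. Any constant simply enters $C(\alpha)$, so Step 2 is not really the obstacle you describe.
\item The pointwise bound in Step 1 does not follow from Young's inequality on the exponentials, which ignores the subtracted polynomial. Justify it by convexity of $s\mapsto\sum_{j\ge j_0}\alpha^j s^j/j!$ on $[0,\infty)$, which vanishes at $0$, applied to $(a+b)^2\le\tfrac12\cdot 2(1+\delta)a^2+\tfrac12\cdot 2C_\delta b^2$. This gives $\varPhi_{\alpha,j_0}(a+b)\le\tfrac12\varPhi_{2\alpha(1+\delta),j_0}(a)+\tfrac12\varPhi_{2\alpha C_\delta,j_0}(b)$.
\item In Step 3 there are no lower-order terms to handle. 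The integrand is $e^{\alpha u^2}-1$, so Cao's inequality applies verbatim.
\end{itemize}
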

	
	For $u \in E\setminus\{0\}$ and $s\in \R$,  we introduce the  fiber map defined by  
	\begin{equation*}
		s\star u=e^su(e^s\cdot).
	\end{equation*}
	As a result,  for all $s\in \R$ we have $\|s\star u\|_2=\|u\|_2$. Recalling the functional $I$ introduced in \eqref{enerfunct}, we now summarize its essential properties below.
	\begin{lemma}\label{lem2.4}
		Let $1<p<2$ and suppose that $f$ satisfies the assumptions $(f_1)$-$(f_2)$.   Then the functional $I(u)$ is of class $C^1$ on $E$. Moreover, for $u\in E\setminus\{0\}$, the composition function  satisfies
		$I(s\star u)\to 0^+$ as $s \to -\infty$. Additionally, if assumption $(f_4)$ holds, then $I(s\star u)\to -\infty$ as $s \to +\infty$.
	\end{lemma}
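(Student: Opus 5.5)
We split the proof into the $C^1$ regularity of $I$ and the two limits along the fiber $s\star u$.

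\emph{$C^1$ regularity.} The quadratic term $\frac12\|\nabla u\|_2^2$ is smooth on $E$. The term $\frac1p\|\nabla u\|_p^p$ is $C^1$ on $D^{1,p}(\mathbb R^2)$ with derivative $\int|\nabla u|^{p-2}\nabla u\cdot\nabla v\dif x$, and the norm of $E$ controls $\|\nabla u\|_p$. For the nonlinear term, we combine $(f_1)$ and $(f_2)$. Fix $\alpha>\alpha_0$ and $q>\max\{2,p^*\}$. For every $\varepsilon>0$ there is $C_\varepsilon$ such that
\begin{equation*}
|f(t)|\le \varepsilon|t|^3+C_\varepsilon|t|^{q-1}\big(e^{\alpha t^2}-1\big),\qquad |F(t)|\le \varepsilon t^4+C_\varepsilon|t|^q\big(e^{\alpha t^2}-1\big).
\end{equation*}
Then $\int F(u)\dif x$ is finite for $u\in E$. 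Indeed, by H\"older we get $\int|u|^q(e^{\alpha u^2}-1)\le\|u\|_{2q}^q\big(\int(e^{2\alpha u^2}-1)\big)^{1/2}$, and $e^{2\alpha u^2}-1\le C\,\varPhi_{2\alpha,j_0}(u)+C\sum_{1\le j<j_0}|u|^{2j}$. Here each $|u|^{2j}$ with $2j\ge2$ is integrable by Lemma~\ref{lem2.2}, and $\varPhi_{2\alpha,j_0}(u)$ is integrable by Lemma~\ref{lem2.1}.

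Gateaux differentiability follows from the mean value theorem and dominated convergence with the same bounds. For continuity of the derivative, take $u_n\to u$ in $E$. Writing $u_n^2\le(1+\delta)u^2+(1+1/\delta)(u_n-u)^2$ and using Lemma~\ref{lem2.1} (the small-norm part for $u_n-u$), we get $\sup_n\int(e^{\beta u_n^2}-1)<\infty$ for every $\beta>0$. Vitali's theorem then gives $f(u_n)\to f(u)$ in the dual of $E$. This standard Trudinger--Moser bookkeeping is the main technical point, but it is routine.

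\emph{Limit $s\to-\infty$.} We compute
\begin{equation*}
I(s\star u)=\frac{e^{2s}}2\|\nabla u\|_2^2+\frac{e^{(2p-2)s}}p\|\nabla u\|_p^p-e^{-2s}\int F(e^su)\dif x .
\end{equation*}
As $s\to-\infty$ we have $\|\nabla(s\star u)\|_2^2=e^{2s}\|\nabla u\|_2^2\to0$ and $\|s\star u\|_2^2=\|u\|_2^2$ is fixed. The third part of Lemma~\ref{lem2.1} therefore gives $\int(e^{2\alpha(s\star u)^2}-1)\le C$ uniformly for $s$ very negative.

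Using $|F(t)|\le\varepsilon t^4+C|t|^q(e^{\alpha t^2}-1)$ together with the Gagliardo--Nirenberg bound $\|v\|_r^r\le C\|\nabla v\|_2^{r-2}\|v\|_2^2$, we obtain
\begin{equation*}
\int F(s\star u)\dif x\le C\big(e^{2s}\varepsilon+e^{(q-2)s}\big).
\end{equation*}
Choosing $q>4$, this is $o(e^{2s})$ once $\varepsilon\to0$.

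The positive terms $\frac{e^{2s}}2\|\nabla u\|_2^2+\frac{e^{(2p-2)s}}p\|\nabla u\|_p^p$ tend to $0$. Moreover $e^{(2p-2)s}\gg e^{2s}$ because $2p-2<2$, so they dominate the $o(e^{2s})$ error. Hence $I(s\star u)\to0^+$.

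\emph{Limit $s\to+\infty$.} By $(f_4)$, $F(t)\ge\frac{\xi}{\nu}|t|^\nu$. Hence
\begin{equation*}
I(s\star u)\le \frac{e^{2s}}2\|\nabla u\|_2^2+\frac{e^{(2p-2)s}}p\|\nabla u\|_p^p-\frac{\xi}{\nu}e^{(\nu-2)s}\|u\|_\nu^\nu .
\end{equation*}
Since $\nu-2>2>2p-2$, the negative term dominates and $I(s\star u)\to-\infty$.
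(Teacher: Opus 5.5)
Your proof is correct and follows essentially the same route as the paper. Well-definedness and $C^1$ come from a growth bound on $F$ combined with H\"older and Trudinger--Moser, and both limits come from comparing the scaling exponents along the fiber, with the $e^{2(p-1)s}$ gradient term dominating as $s\to-\infty$. The differences are minor: you use the $H^1$-type bound (third part of Lemma~\ref{lem2.1}) with Gagliardo--Nirenberg where the paper uses the $\|\cdot\|_{2,p}\le 1$ version, and for $s\to+\infty$ you use $F(t)\ge \frac{\xi}{\nu}|t|^\nu$ from $(f_4)$ directly, which is cleaner than the paper's splitting at the level $\kappa_1$.
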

	\begin{proof} We first prove that $I(u)$ is well-defined. Since $u\in E$ and $p>1$, it suffices to verify that $F(u)\in L^1(\R^2)$. Indeed, by the assumptions $(f_1)$-$(f_2)$, for given $\alpha>\alpha_0$, $r\geq 1$ and $\varepsilon>0$,  there exists a positive constant $C_0:=C(\alpha,r,\varepsilon)$ such that, for any $s\in \mathbb{R}$,
		\begin{align} \label{f2.4}
			\abs{F(t)},\abs{f(t)t} \leq \varepsilon\abs{t}^{4}+C_0\abs{t}^{r}\varPhi_{\alpha,j_0}(t).
		\end{align}
		This, together with Lemmas   \ref{lem2.2} and  \ref{lem2.1}, implies that
		\begin{align*}
			\int_{\mathbb{R}^2}|F(u)|\dif x&\leq\varepsilon\int_{\mathbb{R}^2}|u|^4 \dif x+C_{0}\int_{\mathbb{R}^2}|u|^r\varPhi_{\alpha,j_0}(u)\dif x\\
			&\leq\varepsilon \|u\|^{4}_{4}+C_{0}\|u\|_{2r}^r \left(\int_{\mathbb{R}^2}\left(\varPhi_{\alpha,j_0}(u)\right)^{2}\dif x\right)^{1/2}\\
			&\leq\varepsilon \|u\|^{4}_{4}+C_{0}\|u\|^{r}_{2r}\left(\int_{\mathbb{R}^2}\varPhi_{2\alpha,j_0}(u)\dif x \right)^{1/2}<+\infty.
		\end{align*}
		Here we use the fact  from \cite{Yang12}*{Lemma 2.1} that for any $k\geq 1$,
		\begin{equation}\label{f8}
			\left(\varPhi_{\alpha,j_0}(t)\right)^k\leq \varPhi_{k\alpha,j_0}(t),\quad \text{for  all } t\in \mathbb{R}.
		\end{equation}
		Similarly, we can show that $\int_{\R^2}f(u)\phi\dif x$ is also well-defined for $u,\phi\in E$. Then by a standard argument, it follows that $I\in C^1(E,\mathbb{R})$, and for any $\phi \in E$,
		\begin{align*}
			I'(u)\phi=\int_{\mathbb{R}^2}\nabla u\nabla \phi+\abs{\nabla u}^{p-2}\nabla u \nabla \phi \dif x-\int_{\R^2}f(u)\phi\dif x.
		\end{align*}

		For $u\in E\setminus\{0\}$, we set $w_s(x):=({\alpha}/{\pi})^{1/2} (s\star u)$.
		Then it follows that
		\begin{align*}
			\|w_s\|^2_{2,p}
			=\frac{\alpha}{\pi}\Big(e^{2s}\|\nabla u\|^2_2+e^{4s\left(1-1/p\right)}\|\nabla u\|^2_p\Big)\to 0^+, \quad as\  s\to-\infty.
		\end{align*}
		Note that $\varPhi_{\alpha,j_0}(kt)=\varPhi_{\alpha k^2,j_0}(t)$ for  $t\in\mathbb{R}$ and $k>0$. By employing Lemma $\ref{lem2.1}$, we can find $\hat{s}<0$ small enough such that, for all $s\leq \hat{s}$,
		\begin{align*}
			\int_{\mathbb{R}^2}\varPhi_{2\alpha,j_0}(s\star u)\dif x 
			=\int_{\mathbb{R}^2}\varPhi_{2\pi,j_0}(w_s)\dif x\leq C^2_1
		\end{align*}
		for some $C_1>0$. Therefore, we have for $s\leq\hat{s}$ that,
		\begin{align}\label{eq2.4}
				\int_{\mathbb{R}^2}|F(s\star u)|\dif x
				&\leq\varepsilon \|s\star u\|^{4}_{4}+C_{0}\|s\star u\|^{r}_{2r}\left(\int_{\mathbb{R}^2}\varPhi_{2\alpha,j_0}(s\star u)\dif x \right)^{1/2}\nonumber\\
				&\leq \varepsilon e^{2s}\|u\|^4_4+C_1C_{0}e^{(r-1)s}\|u\|^r_{2r}\nonumber\\
				&\leq\varepsilon C_2 e^{2s}\|u\|^4_E+C_3C_1 C_{0}e^{(r-1)s}\|u\|_E^r.
		\end{align}
		Here the constants $C_2>0$ and $C_3 > 0$ arise from the continuous embedding inequalities $E\hookrightarrow L^{\tau}(\mathbb{R}^2)$, corresponding to
		$\tau = 4$ and $\tau = 2r$, respectively. We thus obtain for $s\leq\hat{s}$ that,
		\begin{align*}
			I(s\star u)=&\frac{e^{2s}}{2}\int_{\mathbb{R}^2}|\nabla u|^2\dif x+\frac{e^{2(p-1)s}}{p}\int_{\mathbb{R}^2}|\nabla u|^p\dif x-\int_{\mathbb{R}^2}F(s\star u)\dif x\\
			\geq&\frac{e^{2s}}{2}\|\nabla u\|^2_2+\frac{e^{2(p-1)s}}{p}\|\nabla u\|^p_p
			-\varepsilon C_2e^{2s}\|u\|_E^4-C_3C_1C_{0}e^{(r-1)s}\|u\|_E^r.
		\end{align*}
		Fix $r \geq 4>2p>2$,
		then there exists $s_0\leq\hat{s}$ such that $I(s\star u)\geq 0$ for all $s\leq s_0$.\ Moreover,  in view of \eqref{eq2.4}, we obtain that
		\begin{align*}
			\int_{\mathbb{R}^2}F(s\star u)\dif x\to 0, \ as\ s\to -\infty.
		\end{align*}
		It then follows  that $I(s\star u) \to 0^+\ as\ s\to-\infty$.
		
		Regarding $(f_1)$ and $(f_2)$,  for $r>4$, there exist $\kappa_1\in(0,1]$ and $C_4,C_5>0$ such that
		\begin{equation*}|F(t)|\leq C_4|t|^2,\quad \text{ if }  |t|\leq \kappa_1,
		\end{equation*}
		and
		\begin{equation*}	|F(t)|\geq C_5|t|^{r},\quad \text{ if }\  |t|\geq \kappa_1.
		\end{equation*}
		It then follows that,
		\begin{align*}
			\int_{\mathbb{R}^2}F(u)\dif x&=\int_{\{|u(x)|\leq \kappa_1\}}F(u)\dif x+\int_{\{|u(x)|\geq \kappa_1\}}F(u)\dif x\\
			&\geq-C_4\int_{\{|u(x)|\leq \kappa_1\}}|u|^2\dif x+C_5\int_{\{|u(x)|\geq \kappa_1\}}|u|^r \dif x\\
			&=- \int_{\{|u(x)|\leq \kappa_1\}}\left(C_4|u|^2+C_5|u|^{r}\right)\dif x+C_5\int_{\mathbb{R}^2}|u|^r \dif x\\
			&\geq-(C_4+C_5)\int_{\mathbb{R}^2}|u|^2\dif x+C_5\int_{\mathbb{R}^2}|u|^r\dif x.
		\end{align*}
		This leads us that
		\begin{align*}
			I(s\star u)\leq\frac{1}{2}e^{2s}\|\nabla u\|^2_2+\frac{1}{p}e^{2(p-1)s}\|\nabla u\|^p_p+(C_4+C_5)\|u\|_2^2-C_5e^{(r -2)s}\|u\|^{r}_{r}.
		\end{align*}
		Since  $r>4$, we conclude that $I(s\star u)\to -\infty$ once $s\to +\infty$.
	\end{proof}

	{\bf The proof of Theorem \ref{phothm}}.
		We claim that $u\in L_{\rm loc}^{\infty}(\mathbb{R}^2)$.
		We proceed with the proof, deferring the verification of this claim to a later stage.
		Since $u\in E$, we conclude that
		$u\in W^{1,2}_{\text{loc}}(\mathbb{R}^2)\cap L^{\infty}_{\text{loc}}(\mathbb{R}^2)$.\ Set
		\begin{align*}
			g(x):=-\lambda u(x)+ f(u(x)).
		\end{align*}
		Given that $f \in C^1(\mathbb{R}, \mathbb{R})$, it follows that $f(u(\cdot))\in  L^{\infty}_{\text{loc}}(\mathbb{R}^2)$. Consequently, we also have $g\in L^{\infty}_{\text{loc}}(\mathbb{R}^2)$. Note that $u\in E$ is a weak solution of  \eqref{2pnls}. We are permitted to utilize the arguments of \cite{HeLi08}*{Theorem 1} to conclude that
		$u\in C^{1,\gamma}_{\text{loc}}(\mathbb{R}^2)$ for some $\gamma \in (0,1)$.
		
		We now prove that $u$ satisfies the Pohozaev identity $P(u)=0$. For $\vec{s}=(s_1,s_2)\in \mathbb R^2$, we set \begin{equation*}
			\mathcal{L}\left(
		\vec{s}\right):=\frac{1}{p} \abs{\vec{s}}^p+\frac{1}{2}\abs{\vec{s}}^2.
		\end{equation*}
		Note that $p>1$. The function $\mathcal{L}$ is of class $C^1$ on $\mathbb R^2$ and is strictly convex respect to $\vec{s}$. Given that $u\in C^{1,\gamma}_{\rm loc}(\mathbb R^2)$. We observe that $u$ is a locally Lipschitz solution of
		\begin{equation*} -\text{div}(\nabla_s \mathcal{L}(\nabla u))=g(u(x)).
		\end{equation*}
		We then conclude from \cite{DMS03}*{Lemma 1} that  for every $\phi\in C^1_{\text{c}}(\R^2,\R^2)$,
		\begin{equation}\label{ph1}
			\sum_{i,j=1}^{2}\int_{\mathbb{R}^2}D_i\phi_jD_{s_i}\mathcal{L}\left(\nabla u\right)D_ju\dif x-\int_{\mathbb{R}^2}(\text{div}\phi)\mathcal{L}\left(\nabla u\right)\dif x=\int_{\mathbb{R}^2}\left(\phi\cdot\nabla u\right)g(u)\dif x.
		\end{equation}
	
		Let us consider a radial cut-off function $\varphi\in C^1_{\text{c}}(\R^2)$ such that $\varphi=1$ on $B_1$ and $\abs{\varphi'(r)r}\leq C$.   By substituting the function $\phi $ in equation \eqref{ph1}  with $\phi_R\in C^1_{\text{c}}(\R^2,\R^2)$,  where $\phi_{R}(x)=\varphi(Rx)x$  for $R\in (0,\infty)$ and $x\in \R^2$,  we deduce
		\begin{align*}
			\sum_{i,j=1}^{2}\int_{\mathbb{R}^2}D_i(\phi_R)_jD_{s_i}\mathcal{L}\left(\nabla u\right)D_ju\dif x-\int_{\mathbb{R}^2}(\text{div}\phi_R)\mathcal{L}\left(\nabla u\right)\dif x=\int_{\mathbb{R}^2}\left(\phi_R\cdot\nabla u\right)g(u)\dif x.
		\end{align*}
		We calculate the right-hand side by performing integration by parts for every $R>0$,
		\begin{align*}
			\int_{\mathbb{R}^2}\left(\phi_R\cdot\nabla u\right)g(u)\dif x
			&=\int_{\mathbb{R}^2} -\frac{\lambda}{2} \varphi(Rx)x\cdot \nabla\left(| u|^2\right)+ \varphi(Rx)x\cdot \nabla\left(F(u)\right)\dif x\\
			&=\int_{\mathbb{R}^2} \left(2\varphi(Rx)+Rx\nabla \varphi(Rx)\right)\Big(\frac{\lambda}{2}\abs{u}^2-F(u)\Big) \dif x.
		\end{align*}
		Given that $u\in E$, and applying Lebesgue's dominated convergence theorem, we obtain
		\begin{equation*}
			\lim_{R\to 0}\int_{\mathbb{R}^2}\left(\phi_R\cdot\nabla u\right)g(u)\dif x=\lambda\| u\|^2_2-2\int_{\R^2}F(u)\dif x.
			\end{equation*}
		Similarly, by a straightforward calculation, we arrive that
		\begin{align*}
			\lim_{R\to 0}\int_{\mathbb{R}^2}(\text{div}\phi_R)\mathcal{L}\left(\nabla u\right)\dif x
			=&\lim_{R\to 0}\int_{\R^2}\left(2\varphi(Rx)+R\nabla \varphi(Rx)\cdot x\right)\mathcal{L}(\nabla u)\dif x\\
			=&\frac{2}{p}\int_{\mathbb{R}^2}|\nabla u|^p\dif x+\int_{\mathbb{R}^2}|\nabla u|^2\dif x.
		\end{align*}
		On the other hand, it is observed that
		\begin{align*}
			& \sum_{i,j=1}^{2}\int_{\mathbb{R}^2}D_i(\phi_R)_jD_{s_i}\mathcal{L}\left(\nabla u\right)D_ju\dif x\\
			=& \int_{\R^2}\varphi(Rx)\left(|\nabla u|^p+|\nabla u|^2\right)\dif x + R\int_{\mathbb R^2} \left(\nabla\varphi(Rx)\cdot\nabla_s\mathcal{L}(\nabla u)\right) (x\cdot \nabla u) \dif x,
		\end{align*}
		and
		\begin{equation*}
			\abs{\left(\nabla\varphi(Rx)\cdot\nabla_s\mathcal{L}(\nabla u)\right) (Rx\cdot \nabla u)}\leq \abs{\nabla\varphi(Rx)}\abs{Rx}\mathcal{L}(\nabla u)\leq C\mathcal{L}(\nabla u).
		\end{equation*}
		These inequalities allow us to once again apply Lebesgue's dominated convergence theorem, leading to the conclusion that
		\begin{align*}
			\lim_{R\to0} \sum_{i,j=1}^{2}\int_{\mathbb{R}^2}D_i(\phi_R)_jD_{s_i}\mathcal{L}\left(\nabla u\right)D_ju\dif x
			=\int_{\mathbb{R}^2}|\nabla u|^p+\abs{\nabla u}^2\dif x.
		\end{align*}
		We thus conclude that
		\begin{equation}\label{ig1}
			\frac{p-2}{p}\|\nabla u\|^p_p=\lambda\|u\|^2_2-2\int_{\R^2}F(u)\dif x.
		\end{equation}
		Multiplying $\eqref{2pnls}$ by $u$, we derive the Nehari-type identity:
		\begin{equation}\label{ig2}
			\|\nabla u\|^2_2+\|\nabla u\|^p_p+\lambda\|u\|_2^2=\int_{\mathbb{R}^2} f(u)u\dif x.
		\end{equation}
		Combining \eqref{ig1} with \eqref{ig2}, we ultimately find that  $ P(u)=0$.

		\indent In what follows, we return to prove the claim that $u\in L^\infty_{\text{loc}}(\mathbb R^2)$. Let $\Omega\neq \emptyset$ be any compact set of $\R^2$.
		For each $m\in\mathbb{N}$ and $\beta>1$, we define the sets  
		\begin{equation*}
			X_{m}:=\{x\in\Omega: \abs{u(x)}^{\beta-1}\leq m\} \text{ and } \, Y_m:=\Omega\setminus X_m.
			\end{equation*}
		Let
		\begin{align*}
			u_m(x):=
			\begin{cases}
				u\abs{u}^{p(\beta-1)},\qquad & x\in X_m,\\
				m^{p}u, \qquad & x\in Y_m,
			\end{cases}
			\quad 	\text{and } \quad
			v_m(x):=
			\begin{cases}
				u|u|^{\beta-1},\qquad &x\in X_m,\\
				mu,\qquad &x\in Y_m.
			\end{cases}
		\end{align*}
		By  defining $u_m(x)=0$ and $v_m(x)=0$ for all $x\not\in \Omega$, we extend the functions $u_m$ and $v_m$ to the whole space, consequently we have  $u_m, v_m\in E$ for any $m\in\mathbb{N}$ and for $x\in \Omega$,
		\begin{align*}
			\nabla u_m(x)=
			\begin{cases}
				(p\beta-p+1)\abs{u}^{p(\beta-1)}\nabla u,\, & x\in  X_m,\\
				m^{p}\nabla u,\,  & x\in   Y_m,
			\end{cases}
			\,   
			\nabla v_m(x)=
			\begin{cases}
				\beta \abs{u}^{\beta-1} \nabla u,\, &x\in X_m,\\
				m\nabla u,\, &x\in Y_m.
			\end{cases}
		\end{align*}
Since $u$
is a weak solution to Problem \eqref{2pnls}, testing it against $u_m$ gives that
		\begin{equation}\label{f1}
			\int_{\Omega}\nabla u \nabla u_m \dif x+\int_{\Omega}\abs{\nabla u}^{p-2}\nabla u \nabla {u}_m\dif x+\lambda\int_{\Omega} u {u}_m\dif x=\int_{\Omega} f(u) {u}_m\dif x.
		\end{equation}
		By a straightforward calculation, we obtain
		\begin{align}\label{pterm}
			\int_{\Omega}|\nabla v_m|^p\dif x=\beta^p\int_{X_m}\abs{u}^{p(\beta-1)}\abs{\nabla u}^p\dif x+m^p\int_{Y_m}\abs{\nabla u}^p\dif x,
		\end{align}	
		and
		\begin{equation}\label{f2}
			\int_{\Omega}\abs{\nabla u}^{p-2}\nabla u \nabla u_m\dif x=(p\beta -p+1)\int_{X_m}\abs{  u}^{p(\beta-1)}\abs{\nabla u}^p\dif x+m^p\int_{Y_m}\abs{\nabla u}^p\dif x.
		\end{equation}
Subtracting \eqref{f2} from \eqref{pterm}, we deduce that
\begin{equation}\label{f3}
\int_{\Omega}|\nabla v_m|^p\dif x-\int_{\Omega}\abs{\nabla u}^{p-2}\nabla u \nabla u_m\dif x=(\beta^p-p\beta+p-1)\int_{X_m}\abs{u}^{p(\beta-1)}\abs{\nabla u}^p\dif x.
\end{equation}
Note that $\beta>1$ and $1<p<2$.  We see that
\begin{equation*}
\beta^p-p\beta+p-1>0  \quad \text{ and  } \quad p\beta-p+1>1.
\end{equation*}
 Therefore, by combining \eqref{f2} and \eqref{f3}, we conclude
\begin{equation}\label{f5}
\int_{\Omega}|\nabla v_m|^p\dif x\leq\frac{\beta^p}{p\beta-p+1}\int_{\Omega}\abs{\nabla u}^{p-2}\nabla u \nabla u_m\dif x\leq \beta^p\int_{\Omega}\abs{\nabla u}^{p-2}\nabla u \nabla u_m\dif x.
\end{equation}
By virtue of \eqref{f1} and \eqref{f5}, we derive
\begin{align*}
\int_{\Omega}|\nabla v_m|^p\dif x&\leq\beta^p\int_{\Omega}\abs{\nabla u}^{p-2}\nabla u \nabla u_m\dif x\\
&=-\beta^p\int_{\Omega}\nabla u \nabla u_m \dif x-\beta^p\lambda\int_{\Omega}uu_m\dif x+\beta^p\int_{\Omega}f(u)u_m\dif x.
\end{align*}
Noticing that
\begin{align*}
\int_{\Omega}\nabla u \nabla u_m \dif x
=  (p\beta-p+1)\int_{X_m}\abs{u}^{p(\beta-1)}\abs{\nabla u}^2\dif x+m^p\int_{Y_m}\abs{\nabla u}^2\dif x\geq 0,
\end{align*}
we thus obtain
\begin{equation}\label{f6}
\int_{\Omega}|\nabla v_m|^p\dif x\leq\beta^p\abs{\lambda}\int_{\Omega}\abs{uu_m} \dif x+\beta^p\int_{\Omega}f(u)u_m\dif x.
\end{equation}
\indent	We shall estimate the terms on the right-hand side of \eqref{f6} separately. On  one hand, regarding that $\abs{u_m}\leq \abs{u}^{p\beta-p+1}$ in $\Omega$ and using the H\"{o}lder inequality, we conclude
\begin{align}\label{f7}
\int_{\Omega}\abs{uu_m}\dif x\leq \int_{\Omega}\abs{u}^{p\beta-p+2}\dif x
&\leq \abs{\Omega}^{p/4}\|u\|_2^{2-p}\|u\|^{p\beta}_{L^{4\beta}(\Omega)}:=C_1 \|u\|^{p\beta}_{L^{4\beta}(\Omega)},
\end{align}
where $C_1=C_1(u,\Omega)>0$ and $\abs{\Omega}$ denotes the measure of $\Omega$. On the other hand, for any $\varepsilon>0$ and  $\alpha>\alpha_0$, there exists $C_{\varepsilon}>0$ such that for any $t\in\mathbb{R}$, 
\begin{align}\label{keyesti}
\abs{f(t)}\leq \varepsilon\abs{t}+C_{\varepsilon}\abs{t}\varPhi_{\alpha,j_0}(t).
\end{align}
This, together with the fact that $\abs{u_m}\leq \abs{u}^{p\beta-p+1}$, implies that
\begin{align*}
\int_{\Omega}f(u)u_m\dif x
&\leq \int_{\Omega}\big(\varepsilon\abs{u}^{p\beta-p+2}+C_{\varepsilon}\abs{u}^{p\beta-p+2}\varPhi_{\alpha,j_0}(u)\big)\dif x.
\end{align*}
In view of \eqref{f7}, we deduce
\begin{align*}
\int_{\Omega}\abs{u}^{p\beta-p+2}\dif x\leq C_1\|u\|^{p\beta}_{L^{4\beta}(\Omega)}. 
\end{align*}
We then deduce from Lemmas \ref{lem2.2}, \ref{lem2.1}  and \eqref{f8} that
\begin{align*}
\int_{\Omega}&\abs{u}^{p\beta-p+2}\varPhi_{\alpha,j_0}(u)\dif x
\leq \|{u}\|_{L^{2\beta}(\Omega)}^{p\beta}
\left(\int_{\Omega}\abs{u}^{2}\varPhi_{\alpha,j_0}^{2/(2-p)}(u)\dif x\right)^{1-p/2}\\
&\leq \abs{\Omega}^{p/4}\|u\|_{L^{4\beta}(\Omega)}^{p\beta}\|u\|_{L^4(\Omega)}^{2-p}\left(\int_{\Omega}\varPhi_{\frac{4\alpha}{2-p},j_0}(u)\dif x\right)^{\frac{2-p}{4}}:=C_2\|u\|^{p\beta}_{L^{4\beta}(\Omega)},
\end{align*}
where $C_2=C_2(u,\Omega)>0$. We therefrom conclude that
\begin{equation}\label{f9}
\int_{\Omega}f(u)u_m\dif x\leq \left(\varepsilon C_1+C_{\varepsilon}C_2\right)\|u\|^{p\beta}_{L^{4\beta}(\Omega)}:= C_3\|u\|^{p\beta}_{L^{4\beta}(\Omega)}.
\end{equation}
Consequently, by combining \eqref{f6},\eqref{f7} and \eqref{f9}, we obtain
\begin{equation}\label{f10}
\int_{\Omega}|\nabla v_m|^p\dif x\leq(C_1|\lambda|+C_3)\beta^p\|u\|^{p\beta}_{L^{4\beta}(\Omega)}:=C_4\beta^p \|u\|^{p\beta}_{L^{4\beta}(\Omega)}.
\end{equation}
Similarly, by replacing $p$ with $2$ in the definition of $u_m$, denoted by $w_m$ for instance,  we then proceed the same arguments as above and derive
\begin{align*}
\int_{\Omega}|\nabla v_m|^2 \dif x =\beta^2\int_{X_m}\abs{u}^{2(\beta-1)}\abs{\nabla u}^2 \dif x+ m^2 \int_{Y_m} \abs{\nabla u}^2 \dif x.
\end{align*}
Note that
\begin{align*}
\int_{\Omega}\nabla u\nabla w_m \dif x
=  (2\beta-1)\int_{X_m}\abs{u}^{2(\beta-1)}\abs{\nabla u}^2\dif x+m^2\int_{Y_m}\abs{\nabla u}^2\dif x.
\end{align*}
We then obtain that
\begin{align*}
\int_{\Omega}|\nabla v_m|^2\dif x-\int_{\Omega} \nabla u \nabla w_m\dif x=(\beta^2-2\beta+1)\int_{X_m}\abs{u}^{2(\beta-1)}\abs{\nabla u}^2\dif x.
\end{align*}
and
\begin{equation}\label{wterm}
\int_{\Omega} \abs{\nabla v_m}^2\leq \frac{\beta^2}{2\beta-1}\int_{\Omega}\nabla u\nabla w_m \dif x\leq \beta^2\int_{\Omega}\nabla u \nabla w_m.
\end{equation}
Again by testing the equation \eqref{2pnls} against $w_m$, we deduce
\begin{align*}
\int_{\Omega}\nabla u \nabla w_m \dif x+\int_{\Omega}\abs{\nabla u}^{p-2}\nabla u \nabla w_m\dif x+\lambda\int_{\Omega} u w_m\dif x=\int_{\Omega} f(u) w_m\dif x.
\end{align*}
This, together with \eqref{wterm}  implies that
\begin{align}\label{fg11}
\int_{\Omega}\abs{\nabla v_m}^2\dif x&\leq\beta^2\int_{\Omega}\left(-\abs{\nabla u}^{p-2}\nabla u\nabla w_m-\lambda u w_m+f(u)w_m\right)\dif x&\nonumber\\
& \leq \beta^2 \abs{\lambda}\int_{\Omega}\abs{uw_m}\dif x +\beta^2\int_{\Omega}\abs{f(u)w_m}\dif x&\nonumber\\
&\leq \beta^2 (\abs{\lambda}+\varepsilon)\int_{\Omega}\abs{u}^{2\beta}\dif x +\beta^2C_{\varepsilon}\int_{\Omega}\abs{u}^{2\beta}\varPhi_{\alpha,j_0}(u)\dif x.
\end{align}
We observe from the H\"older inequality that
\begin{equation}\label{fg111}
\int_{\Omega}\abs{u}^{2\beta}\dif x\leq
\abs{\Omega}^{1/2}\|u\|^{2\beta}_{L^{4\beta}(\Omega)}:= C_5\|u\|^{2\beta}_{L^{4\beta}(\Omega)}.
\end{equation}
It then follows from  Lemma \ref{lem2.1} that
\begin{align}\label{fg12}
\int_{\Omega}\abs{u}^{2\beta}\Phi_{\alpha,j_0}(u)\dif x\leq  \|u\|_{L^{4\beta}(\Omega)}^{2\beta} \left(\int_{\Omega}\Phi_{2\alpha,j_0}(u)\dif x\right)^{1/2}:=C_6\|u\|^{2\beta}_{L^{4\beta}(\Omega)}.
\end{align}
By combining \eqref{fg111} and \eqref{fg12}, we can deduce from \eqref{fg11} that
\begin{align}\label{fg13}
\int_{\Omega}|\nabla v_m|^2\dif x \leq C_5(\abs{\lambda}+\varepsilon)\beta^2\|u\|_{L^{4\beta}(\Omega)}^{2\beta}+ C_{\varepsilon}C_6\beta^2\|u\|_{L^{4\beta}(\Omega)}^{2\beta}:=C_7\beta^2\|u\|_{L^{4\beta}(\Omega)}^{2\beta}.
\end{align}
From \eqref{f10} and \eqref{fg13}, we obtain that
\begin{align*}
\|v_m\|^2_{2,p}&=\int_{\Omega}|\nabla v_m|^2\dif x+\left(\int_{\Omega}|\nabla v_m|^p\dif x\right)^{2/p}\\
&\leq  C_7\beta^2 \|u\|^{2\beta}_{L^{4\beta}(\Omega)}+ C_4^{2/p}\beta^2\|u\|^{2\beta}_{L^{4\beta}(\Omega)}:=C_8\beta^2\|u\|^{2\beta}_{L^{4\beta}(\Omega)}.
\end{align*}
According to Lemma \ref{lem2.2}, for $p_1 >\max\{4,p^*\}$,  we arrive at  by noting $|v_m|=|u|^{\beta}$ in $X_m$ that,
\begin{align*}
\|u\|^{\beta}_{L^{p_1\beta}(X_m)}=	\left(\int_{X_m}|v_m|^{p_1}\dif x\right)^{1/p_1}\leq C_9 \|v_m\|_{2,p} \leq  C_{10}\beta\|u\|^{\beta}_{L^{4\beta}(\Omega)}.
\end{align*}
Letting $m\to\infty$ and using the Fatou lemma, we obtain
\begin{equation}\label{f12}
\|u\|_{L^{p_1\beta}(\Omega)}\leq \beta^{1/\beta}C_{10}^{1/\beta} \|u\|_{L^{4\beta}(\Omega)},\qquad \text{for} \ \beta>1,\ p_1>\max\{4,p^*\}.
\end{equation}
\indent Now we start the Moser's iteration procedure.
By setting $\beta=\delta=p_1/4>1$, we obtain by virtue of \eqref{f12} that
\begin{equation*}
\|u\|_{L^{p_1\delta}(\Omega)}\leq \delta^{1/\delta}C_{10}^{1/\delta} \|u\|_{L^{p_1}(\Omega)}.
\end{equation*}
By taking $\beta=\delta^2$, we deduce
\begin{align*}
\|u\|_{L^{p_1\delta^2}(\Omega)}\leq \delta^{1/\delta+2/\delta^2}C_{10
}^{1/\delta+1/\delta^2} \|u\|_{L^{p_1}(\Omega)}.
\end{align*}
Repeating this process, we generally obtain
\begin{equation*}
\|u\|_{L^{p_1\delta^j}(\Omega)}\leq \delta^{\sum_{j=1}^{\infty}j/\delta^j}C_{10}^{\sum_{j=1}^{\infty}1/\delta^j} \|u\|_{L^{p_1}(\Omega)}.
\end{equation*}
Since $\delta>1$, there exists a constant $C_{11}:=C_{11}(u,\Omega)>0$
such that
\begin{align*}
\|u\|_{L^{p_1\delta^j}(\Omega)}\leq  C_{11}\|u\|_{L^{p_1}(\Omega)}.
\end{align*}
By  letting $j\to\infty$ and  noting Lemma \ref{lem2.2}, there exists $C_{12}>0$ such that
\begin{align*}
\|u\|_{L^{\infty}(\Omega)}\leq C_{12}\|u\|_{2,p}<+\infty.
\end{align*}
Then the claim follows. \qed

\section{Asymptotic behavior and characterization of  $\varUpsilon_m$ }
 
\label{sect3}

\indent
Recall  that $H(t)= f(t)t-2F(t)$. We define $K(t):=f(t)t-4F(t)$.
The first lemma summarizes some useful properties of $H$ and $K$.
\begin{lemma}\label{lem2.6}
Assume that $f$ satisfies $(f_1)$-$(f_3)$. Then,
\begin{enumerate}
	\item[{\rm (i)}] $\dfrac{H(t)}{t^4}$ is nonincreasing on $(-\infty, 0)$ and nondecreasing on $(0,+\infty)$;
	\item[{\rm (ii)}]$\dfrac{K(t)}{t^2}$ is nonincreasing on $(-\infty, 0)$ and nondecreasing on $(0,+\infty)$;
	\item[{\rm (iii)}]for all $t\in \mathbb R$, the functions $K(t)$, $H(t)$ and $F(t)\geq 0$.
\end{enumerate}
\end{lemma}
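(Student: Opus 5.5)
The plan is to prove (i) by a direct derivative computation, derive (ii) from (i) through an ODE-type identity linking $K$ and $H$, and read off (iii) from the monotonicity together with the behaviour at the origin. Since $f$ is odd, $F$, $H$ and $K$ are even, so it suffices to work on $(0,+\infty)$; the statements on $(-\infty,0)$ then follow by symmetry.

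\emph{Step (i).} By $(f_1)$, $H\in C^1$ with $h=H'$. For $t>0$,
\[
\frac{d}{dt}\Bigl(\frac{H(t)}{t^4}\Bigr)=\frac{h(t)t-4H(t)}{t^5}\geq 0
\]
by $(f_3)$. Hence $H(t)/t^4$ is nondecreasing on $(0,\infty)$. Moreover, $(f_1)$ gives $f(t)=o(t^3)$ and $F(t)=o(t^4)$, so $H(t)/t^4\to 0$ as $t\to0^+$. Combined with monotonicity this yields $H(t)\ge 0$ for $t>0$, and by evenness for all $t$.

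\emph{Step (ii).} Write $K=H-2F$. Since $H'(t)=f'(t)t-f(t)$, we have $K'(t)=f'(t)t-3f(t)$. This gives
\[
\frac{d}{dt}\Bigl(\frac{K(t)}{t^2}\Bigr)=\frac{K'(t)t-2K(t)}{t^3}=\frac{f'(t)t^2-5f(t)t+8F(t)}{t^3}.
\]
I would express the numerator through $H$: we have $h(t)t-4H(t)=f'(t)t^2-5f(t)t+8F(t)$. So the derivative equals $(h(t)t-4H(t))/t^3$, which is $\ge0$ by $(f_3)$ directly. Thus $K(t)/t^2$ is nondecreasing on $(0,\infty)$. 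Since $K(t)=o(t^4)$, we get $K(t)/t^2\to0$ at $0^+$, hence $K\ge0$.

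\emph{Step (iii).} It remains to show $F\ge0$. Observe that $(t^{-2}F(t))'=H(t)/t^3\ge0$ for $t>0$, and $F(t)/t^2\to0$ as $t\to0^+$, so $F\ge0$. Alternatively, $F\ge0$ follows directly from $(f_4)$, since $f(t)\ge\xi t^{\nu-1}>0$ for $t>0$; that route, however, uses $(f_4)$, which the lemma does not assume.

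The only step requiring care is the algebraic identity in (ii) linking $K$ to $h t-4H$. Verifying it by expanding $h=f't-f$ is routine, so I expect no real obstacle. The limits at the origin rely solely on $f(t)=o(t^3)$ from $(f_1)$.
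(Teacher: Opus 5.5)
Your proposal is correct and follows the paper's route: the derivative identities $(H/t^4)'=(h(t)t-4H(t))/t^5$ and $(K/t^2)'=(h(t)t-4H(t))/t^3$ combined with $(f_3)$, plus the vanishing of these quotients at the origin, which comes from $f(t)=o(t^3)$. Your explicit step $(F/t^2)'=H(t)/t^3\ge 0$ for $t>0$ also fills in a point the paper leaves unjustified, namely why $F\ge 0$ follows once $H,K\ge 0$ are known.
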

\begin{proof} For $t\in \R\setminus\{0\}$, we define
\begin{equation*}
	\psi(t):=\frac{H(t)}{t^4}\ \text{ and } \ \vartheta(t):=\frac{K(t)}{t^2}.
\end{equation*}
Since $f\in C^1(\mathbb{R},\mathbb{R})$ and $K(t)=H(t)-2F(t)$, it is easy to verify that 
\begin{equation*}
	\psi^{\prime}(t)=\frac{h(t) t-4 H(t)}{t^5}  \   \text{ and }     \  \vartheta^{\prime}(t)=\frac{h(t) t-4 H(t)}{t^3}.
\end{equation*}
Consequently, conclusions (i) and (ii) follow  immediately from $(f_3)$.
For the item (\romannumeral 3), we define $\tilde{\vartheta}(t)=\vartheta(t)$ for $t\neq 0$ and $\tilde{\vartheta}(0)=0$.
By virtue of $(f_1)$-$(f_3)$, it follows that $\tilde{\vartheta}$ is continuous on $\mathbb{R}$. In light of this and conclusion  (\romannumeral 2), we deduce that $\tilde{\vartheta}(t)\geq0$ for all $t\in \mathbb{R}$, which implies
$K(t)\geq 0$. Similarly, since $\psi(t)\geq 0$ for $t\neq 0$, by an argument analogous to that for $\tilde{\vartheta}$,
we conclude $H(t)\geq 0$ for all $t\in \mathbb R$.  This consequently ensures that  $F(t)\geq 0$ for all $t\in \mathbb R$.
\end{proof}

\begin{lemma}\label{lem2.7}
Assume that $f$ satisfies $(f_1)$-$(f_3)$. Then the following statements hold:
\begin{enumerate}
	\item[{\rm (i)}] for any $u \in \cB_m\setminus\{0\}$, there exists a unique $s_u\in \mathbb{R}$ such that  
	\begin{equation*}
		I\left(s_u\star u\right)=\max_{s\in \mathbb{R}}I\left(s\star u\right).
	\end{equation*}
	As a result,  $\cB_m\cap\mathcal{P}\neq\emptyset$.
	Moreover, if $P(u)\leq 0$, then $s_u\leq 0$.
	\item[{\rm (ii)}] the map $s_u: \mathcal{S}_m \to \R$ is continuous.
	\item[{\rm (iii)}]there exists $\tau_0>0$ such that
	\begin{equation*}\inf\limits_{u \in \mathcal{P}_m}\|u\|_{2,p}\geq \inf\limits_{u \in \mathcal{B}_m\cap\mathcal{P}}\|u\|_{2,p}\geq\tau_0.
		\end{equation*}
\end{enumerate}
\end{lemma}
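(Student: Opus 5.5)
We study the fiber map $\Psi_u(s):=I(s\star u)$ for fixed $u\in\cB_m\setminus\{0\}$. Since
\begin{equation*}
\Psi_u(s)=\frac{e^{2s}}{2}\|\nabla u\|_2^2+\frac{e^{2(p-1)s}}{p}\|\nabla u\|_p^p-e^{-2s}\int_{\R^2}F(e^{s}u)\dif x ,
\end{equation*}
a direct differentiation gives $\Psi_u'(s)=P(s\star u)$. Writing $t=e^s$ and $t\,\partial_t$ of the last term produces $t^{-2}\int (f(tu)tu-2F(tu))=t^{-2}\int H(tu)$.

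\textbf{Part (i).} By Lemma \ref{lem2.4}, $\Psi_u(s)\to0^+$ as $s\to-\infty$ and $\Psi_u(s)\to-\infty$ as $s\to+\infty$, so $\Psi_u$ has a global maximum point. For uniqueness, note that $\Psi_u'(s)=0$ is equivalent to
\begin{equation*}
\|\nabla u\|_2^2+2\big(1-\tfrac1p\big)e^{2(p-2)s}\|\nabla u\|_p^p=\int_{\R^2}\frac{H(e^su)}{e^{4s}u^4}\,u^4\dif x .
\end{equation*}
Since $p<2$, the left side is strictly decreasing in $s$ (using $\nabla u\neq0$). By Lemma \ref{lem2.6}(i), the right side is nondecreasing in $s$. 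Hence the left side minus the right side is strictly decreasing, so $\Psi_u'$ has at most one zero, and it is positive before that zero and negative after it. This yields a unique critical point $s_u$, which is the global maximum, and $s_u\star u\in\cB_m\cap\cP$ because $\|s\star u\|_2=\|u\|_2$. If $P(u)=\Psi_u'(0)\le 0$, then $0\ge s_u$ by this sign structure.

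\textbf{Part (ii).} Let $u_n\to u$ in $\cS_m$. We first show that $\{s_{u_n}\}$ is bounded. An upper bound follows from the uniform version of the $-\infty$ estimate in Lemma \ref{lem2.4}: since $\|u_n\|_r\to\|u\|_r$, we get $\Psi_{u_n}(s)<0$ for $s$ large, uniformly in $n$. A lower bound follows from $\Psi_{u_n}(s_{u_n})\ge\Psi_{u_n}(0)\to\Psi_u(0)$ together with the estimate of Lemma \ref{lem2.4}, which gives $\Psi_{u_n}(s)\to0$ uniformly as $s\to-\infty$. Here one should compare with some $s$ where $\Psi_u(s)>0$ rather than with $s=0$. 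Passing to a subsequence, $s_{u_n}\to s^*$. Continuity of $P$ and of $(s,u)\mapsto s\star u$ gives $P(s^*\star u)=0$, so uniqueness forces $s^*=s_u$.

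\textbf{Part (iii).} The first inequality holds because $\cP_m\subset\cB_m\cap\cP$. For the second, take $u\in\cB_m\cap\cP$, so that
\begin{equation*}
\|\nabla u\|_2^2+2\big(1-\tfrac1p\big)\|\nabla u\|_p^p=\int_{\R^2}H(u)\dif x .
\end{equation*}
From \eqref{f2.4} and $(f_1)$ we have $|H(t)|\le\varepsilon|t|^4+C|t|^r\varPhi_{\alpha,j_0}(t)$ with $r>4$. Suppose $\|u\|_{2,p}\le\tau$ is small. Then by Lemma \ref{lem2.1} the exponential integral $\int\varPhi_{2\alpha,j_0}(u)$ is bounded, since $2\alpha\|u\|_{2,p}^2<4\pi$. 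Lemma \ref{lem2.2} gives $\|u\|_{2r}\lesssim\|u\|_{2,p}$. For the quartic term we use Gagliardo–Nirenberg, $\|u\|_4^4\le C\|u\|_2^2\|\nabla u\|_2^2\le Cm\|\nabla u\|_2^2$. This is the delicate point: the $\varepsilon$ must be chosen with $\varepsilon Cm<1/2$, which is possible because $f(t)=o(t^3)$.

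With this choice,
\begin{equation*}
\tfrac12\|\nabla u\|_2^2+c\|\nabla u\|_p^p\le C\|u\|_{2,p}^r .
\end{equation*}
Since $r>4>2$, the left side dominates a positive power of $\|u\|_{2,p}$ that is at most 2. Here $\|\nabla u\|_p^p=(\|\nabla u\|_p^2)^{p/2}$, and for small norm $\ge$ its square. This forces $\|u\|_{2,p}\ge\tau_0$.

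The main obstacle is controlling the quartic (mass-critical) term uniformly on $\cB_m$ via the $o(t^3)$ condition, together with the uniform bounds on $s_{u_n}$ in (ii).
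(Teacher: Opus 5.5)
Your proof is correct. Parts (i) and (ii) follow the paper's proof. Uniqueness comes from the strict monotonicity of $e^{-2s}P(s\star u)$: the paper phrases this as a contradiction between two zeros $s_1<s_2$, and reuses it with $s_1=0$ for the sign claim. Continuity comes from bounding $s_{u_n}$ by comparison with $I(s_u\star u)>0$ and then using uniqueness of the limit. Your uniform version of the $-\infty$ estimate is actually more careful than the paper's one-line appeal to Lemma \ref{lem2.4}. For the lower bound, the paper uses $F\ge 0$ to get $I(s\star u_n)\le \frac12 e^{2s}\|\nabla u_n\|_2^2+\frac1p e^{2(p-1)s}\|\nabla u_n\|_p^p$, which is the quickest route to the uniform smallness you need.

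Part (iii) is where you genuinely differ. The paper makes the quartic term strictly superquadratic in $\|u\|_{2,p}$ by splitting into cases:
\begin{itemize}
\item if $p^*\le 4$, it uses $E^{2,p}\hookrightarrow L^4$ to get $\|u\|_{2,p}^4$;
\item if $p^*>4$, it interpolates between $L^2$ and $L^{p^*}$ to get a bound of order $m^{2(1-\kappa)}\|u\|_{2,p}^{p/(p-1)}$ with $p/(p-1)>2$.
\end{itemize}
So in the paper the size of $\varepsilon$ plays no role. You instead use the Gagliardo--Nirenberg (Ladyzhenskaya) bound $\|u\|_4^4\le Cm\|\nabla u\|_2^2$, which is exactly quadratic. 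You then absorb it into the Dirichlet term by taking $\varepsilon<1/(2Cm)$, and this is where $f(t)=o(t^3)$ is really needed. Your route avoids the case split and uses only the $2$-Laplacian part and the mass bound. The paper's argument would still work under merely $f(t)=O(t^3)$ near $0$.

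One small fix: Lemma \ref{lem2.2} gives $\|u\|_{2r}\lesssim\|u\|_{2,p}$ only when $2r\ge p^*$, and $r>4$ does not ensure this for $p$ close to $2$. Since $r$ is free in the growth estimate, either take $r\ge\max\{4,p^*/2\}$, or use Gagliardo--Nirenberg in $H^1(\R^2)$ to get $\|u\|_{2r}^r\le Cm^{1/2}\|\nabla u\|_2^{r-1}$, which is still superquadratic.
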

\begin{proof} Let $u\in E\setminus\{0\}$, from Lemma $\ref{lem2.4}$, we observe that $\varphi_u(s):=I(s\star u)$ admits a global maximum at some $s_u\in \mathbb{R}$, so that we have $\varphi_u'(s_u)=0$. By direct calculations,
\begin{align*}
	&\varphi_u'(s) =e^{2s}\|\nabla u\|^2_2+\frac{2(p-1)}{p}e^{2(p-1)s}\|\nabla u\|^p_p+e^{-2s}\int_{\mathbb{R}^2}\left(2F(e^su)-f(e^su)e^su\right)\dif x\\
	&=\|\nabla \left(e^su(e^sx)\right)\|^2_2+\frac{2(p-1)}{p}\|\nabla \left(e^su(e^sx)\right)\|^p_p-\int_{\mathbb{R}^2}H\left(e^su(e^sx)\right)\dif x=P(s\star u).
\end{align*}
We then reach that $P\left(s_u\star u\right)=0$ and $I\left(s_u\star u\right)= \max_{s\in \mathbb{R}}I\left(s\star u\right)$.
To prove the uniqueness of $s_u$, we assume by contradiction that there exists $s_1,s_2\in \mathbb{R}$ such that $P\left(s_1\star u\right)=P\left(s_2\star u\right)=0$ and $s_1< s_2$. Therefore, we have
\begin{align*}
	0=&e^{-2s_1}P\left(s_1\star u\right)-e^{-2s_2}P(s_2\star u)\\
	=&\frac{2(p-1)}{p}\big(e^{2(p-2)s_1}-e^{2(p-2)s_2}\big)\|\nabla u\|^p_p+\int_{\mathbb{R}^2}\Big(\frac{H(e^{s_2}u)}{e^{4s_2}}-\frac{H(e^{s_1}u)}{e^{4s_1}}\Big)\dif x.
\end{align*}
This, together with Lemma $\ref{lem2.6}$ and the fact that $H(t)$ is even, implies that
\begin{align*}
	0<\frac{2(p-1)}{p} \big(e^{2(p-2)s_1}-e^{2(p-2)s_2}\big)\|\nabla u\|^p_p
	=&\int_{\mathbb{R}^2}\Big(\frac{H(e^{s_1}u)}{e^{4s_1}u^4}-\frac{H(e^{s_2}u)}{e^{4s_2}u^4}\Big)u^4\dif x\leq0,
\end{align*}
which is a contradiction. Hence, for any $u\in \mathcal{B}_m\setminus\{0\}$, there exists a unique $s_u\in \mathbb{R}$ such that $(s_u\star u)\in \mathcal{P}$. Note that $(s_u\star u)\in \mathcal{B}_m$. We then conclude that $\mathcal{B}_m\cap \mathcal{P}\neq \emptyset$. We suppose that $s_u>0$ if $P(u)\leq 0$. Note that $P(u)=P(0\star u)\leq 0$ and $P(s_u\star u)=0$. By setting $s_1=0$ and $s_2=s_u$ and repeating the similar argument as above, we deduce  a contradiction again:
\begin{align*}
	0<\frac{2(p-1)}{p} \big(e^{2(p-2)s_1}-e^{2(p-2)s_2}\big)\|\nabla u\|^p_p
	\leq &\int_{\mathbb{R}^2}\Big(\frac{H(e^{s_1}u)}{e^{4s_1}u^4}-\frac{H(e^{s_2}u)}{e^{4s_2}u^4}\Big)u^4\dif x\leq0.
\end{align*}

We now turn to the continuity of $s_u$.  Let $\{u_n\}\subset \mathcal{S}_m$, satisfy $u_n\to  u$ strongly in $E$, so that we have $u\in \mathcal{S}_m$ and $\{u_n\}$ is bounded $E$.  Setting $s_n:=s_{u_n}$ for any $n\geq 1$, up to a subsequence, it  suffices to prove $s_n\to s_u$ as $n\to \infty $. We first prove that $\{s_n\}$ is bounded. In fact, if $s_n\to +\infty$, in view of Lemma $\ref{lem2.4}$ , we reach a contradiction that
$0<I(s_n\star u_n)\to -\infty$. Therefore, the sequence $\{s_n\}$ is bounded from above. Since $u_n\to u$ strongly in $E$, by noting the fact that $I\in C^1$ on $E$, we deduce that
\begin{align*}
	I(s_u\star u_n)=I(s_u\star u)+o_n(1).
\end{align*}
Note that $s_n\star u_n\subset \mathcal{P}_m$ for any $n\geq 1$. We then observe that
$I(s_n\star u_n)\geq I(s_u\star u_n)$, we thus conclude
\begin{align}\label{eql1}
	\liminf_{n\to\infty}I(s_n\star u_n)\geq I(s_u\star u)>0.
\end{align}
On the other hand, if $s_n\to -\infty$ as $n\to\infty$, by considering $F(t)\geq 0$, we obtain that
\begin{align*}
	I(s_n\star u_n)&=\frac{1}{2}e^{2s_n}\|\nabla u_n\|^2_2+\frac{1}{p}e^{2(p-1)s_n}\|\nabla u_n\|^p_p-\int_{\R^2}F(s_n\star u_n)\dif x&\\
	&\leq\frac{1}{2}e^{2s_n}\|\nabla u_n\|^2_2+\frac{1}{p}e^{2(p-1)s_n}\|\nabla u_n\|^p_p\to 0^+,
\end{align*}
which contradicts to $\eqref{eql1}$.
Without loss of generality, up to a subsequence, we may assume that $s_n\to \tilde{s}$, for some $\tilde{s}\in \R$.
By observing the fact that $s_n\star u_n \to \tilde{s}\star u$ strongly in $E$ as $n\to\infty$, we infer that $P(\tilde{s}
\star u)=\lim_{n\to\infty}P(s_n\star u_n)=0$. From the uniqueness of $s_u$, one then obtain $\tilde{s}=s_u$. This completes the continuity of $s_u$ with respect to $u\in \mathcal{S}_m$.

Finally, we complete the proof of the last conclusion by a contradiction argument. Since $\mathcal{P}_m\subset \cB_m\cap \mathcal{P}$, it suffices to show that the latter infimum value is strictly positive.
Suppose that there exists a sequence $\{u_n\}\subset \cB_m\cap\mathcal{P}$ such that $\|u_n\|_{2,p}\to 0 $ as $n\to \infty$.  Then, for $\alpha>\alpha_0$,  by noting Lemma \ref{lem2.1}, there exists $C_0>0$ such that
\begin{align}\label{upperbound}
	\int_{\mathbb{R}^2} \varPhi_{2\alpha,j_0}(u_n)\dif x=\int_{\mathbb{R}^2} \varPhi_{2\pi,j_0}(\sqrt{{\alpha}/{\pi}}u_n)\dif x\leq C^2_0.
\end{align}
By using similar arguments as \eqref{f2.4} in Lemma \ref{lem2.4}, and by considering Lemma $\ref{lem2.2}$, for any $\varepsilon>0$,  $r\geq p^*>2$, there exists $C_\varepsilon>0$ such that
\begin{align*}
	&\int_{\R^2}F(u)\dif x\leq \int_{\mathbb{R}^2} f(u_n)u_n \dif x &\nonumber\\
	&\leq\varepsilon \|u_n\|^{4}_{4}+C_{\varepsilon}\|u_n\|_{2r}^{r}\left(\int_{\mathbb{R}^2}\varPhi_{2\alpha,j_0}(u_n)\dif x \right)^{1/2}&\nonumber\\
	&\leq\varepsilon  \|u_n\|^{4}_{4}+C_{\varepsilon}C_0C_1\|u_n\|^{r}_{2,p}.
\end{align*}
Here the constant $C_1>0$ comes from the embedding $E_{2,p}\hookrightarrow L^{2r}(\R^2)$.

We first deal with the case that $4\geq p^*$.  By Lemma \ref{lem2.1}, there exists a constant   $C_2>0$,  independent of $n$, such that
$\|u_n\|_4^4\leq C_2\|u_n\|_{2,p}^4$. Fixing $\varepsilon={1}/{(3C_2)}$ and $C_{\varepsilon}$, we obtain for largely $n$,
\begin{align*}
	\|u_n\|^2_{2,p}=\|\nabla u_n\|^2_2+\|\nabla u_n\|^2_p
	&\leq\|\nabla u_n\|^2_2+2(1-1/p)\|\nabla u_n\|^p_p&\nonumber\\
	&=\int_{\mathbb{R}^2}\left(f(u_n)u_n-2F(u_n)\right) \dif x&\nonumber\\
	&\leq  \|u_n\|^{4}_{2,p}+3C_{\varepsilon}C_0 C_1\|u_n\|^{r}_{2,p}.
\end{align*}
This gives a contradiction since $r>2$. For $p^*>4$, we apply the interpolation inequality for any $u\in L^2(\R^2)\cap L^{p^*}(\R^2)$,
\begin{equation*}
	\|u\|_4\leq \|u\|_2^{1-\kappa}\|u\|_{p^*}^\kappa,  \text{ where } 4\kappa=\frac{p}{p-1},
\end{equation*}
to deduce that
\begin{align*}
	\int_{\R^2}F(u)\dif x\leq \int_{\mathbb{R}^2}f(u_n)u_n\dif x
	&\leq\varepsilon  \|u_n\|^{4}_{4}+C_{\varepsilon}C_0 C_1\|u_n\|^{r}_{2,p}&\nonumber\\
	&\leq \varepsilon \|u_n\|^{4(1-\kappa)}_2\|u_n\|_{p^*}^{4\kappa}+C_{\varepsilon}C_0 C_1\|u_n\|^{r}_{2,p}.
\end{align*}
By combining the embedding: $E_{2,p}\hookrightarrow L^{p^*}(\R^2)$,  we  also obtain for largely $n$,
\begin{align}\label{lowerbound}
	\|u_n\|^2_{2,p}
	&\leq\|\nabla u_n\|^2_2+2(1-1/p)\|\nabla u_n\|^p_p&\nonumber\\
	&=\int_{\mathbb{R}^2}\left(f(u_n)u_n-2F(u_n)\right) \dif x&\nonumber\\
	&\leq  C_3\|u_n\|^{4\kappa}_{2,p}+3C_{\varepsilon}C_0 C_1\|u_n\|^{r}_{2,p}.
\end{align}
This also leads to a contradiction since $4\kappa >2$. The proof is completed.
\end{proof}

We now define the following two infimum values:
\begin{align*}
&\varUpsilon_m:=\inf\limits_{u\in\cB_m\cap\mathcal{P}}I(u), \quad\text{ and } \quad  \varUpsilon_{m,r}:=\inf\limits_{u\in E_r\cap\cB_m\cap\mathcal{P}}I(u).
\end{align*}
Lemma \ref{lem2.7} implies that $0\leq \varUpsilon_{m}\leq \varUpsilon_{m,r}<+\infty$.
We in fact have the equality: $\varUpsilon_m=\varUpsilon_{m,r}$.
\begin{lemma}\label{lem3.5}
Suppose that $(f_1)$-$(f_3)$ are fulfilled. Then $\varUpsilon_m=\varUpsilon_{m,r}$. If the assumption $(f_4)$ additionally holds, then
$\varUpsilon_m\to 0^+$ as $\xi\to +\infty$.
\end{lemma}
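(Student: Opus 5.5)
For the equality $\varUpsilon_m=\varUpsilon_{m,r}$ I will use Schwarz symmetrization together with the fiber map. The inequality $\varUpsilon_m\le\varUpsilon_{m,r}$ is trivial, so it suffices to show $\varUpsilon_{m,r}\le\varUpsilon_m$.

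Fix $u\in\cB_m\cap\mathcal{P}$ and let $u^*$ be the Schwarz symmetrization of $|u|$. Then $u^*\in E_r$ and $\|u^*\|_2=\|u\|_2\le m^{1/2}$, so $u^*\in\cB_m$. Pólya--Szegő gives $\|\nabla u^*\|_2\le\|\nabla u\|_2$ and $\|\nabla u^*\|_p\le\|\nabla u\|_p$. Since $F,H$ are even and equimeasurable rearrangement preserves integrals of functions of $|u|$, we get $\int F(u^*)=\int F(u)$ and $\int H(u^*)=\int H(u)$. Hence $P(u^*)\le P(u)=0$, so Lemma~\ref{lem2.7}(i) gives $s_{u^*}\le 0$.

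Then
\begin{align*}
\varUpsilon_{m,r}\le I(s_{u^*}\star u^*)
&\le \frac{e^{2s_{u^*}}}{2}\|\nabla u\|_2^2+\frac{e^{2(p-1)s_{u^*}}}{p}\|\nabla u\|_p^p-\int_{\R^2}F(s_{u^*}\star u)\dif x\\
&= I(s_{u^*}\star u)\le \max_{s\in\R} I(s\star u)=I(u),
\end{align*}
where the last equality uses $s_u=0$, which holds because $P(u)=0$ and $s_u$ is unique. Taking the infimum over $u$ yields $\varUpsilon_{m,r}\le\varUpsilon_m$. Here we use that $\int F(s\star u^*)=\int F(s\star u)$, since scaling commutes with rearrangement.

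For $\varUpsilon_m\to 0^+$ as $\xi\to\infty$, positivity follows from Lemma~\ref{lem2.6}(iii) and the identity
\[
I(u)=I(u)-\tfrac14 P(u)=\tfrac14\|\nabla u\|_2^2+\tfrac{2-p}{2p}\|\nabla u\|_p^p+\tfrac14\int K(u)\ge c\,\|\nabla u\|_2^2
\]
on $\mathcal{P}$. The real content is the upper bound. Fix a radial $u_0\in C_c^\infty$ with $\|u_0\|_2^2=m$. Because $F\ge0$ by Lemma~\ref{lem2.6}(iii), condition $(f_4)$ gives $F(t)\ge \frac{\xi}{\nu}|t|^\nu$. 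Therefore
\[
\varUpsilon_m\le \max_{s\in\R} I(s\star u_0)\le \max_{s\in\R}\Big(\tfrac{e^{2s}}{2}A+\tfrac{e^{2(p-1)s}}{p}B-\tfrac{\xi}{\nu}e^{(\nu-2)s}\|u_0\|_\nu^\nu\Big),
\]
where $A=\|\nabla u_0\|_2^2$ and $B=\|\nabla u_0\|_p^p$. Set $t=e^s$ and $D=\|u_0\|_\nu^\nu$. Since $\nu>4$ while the positive powers are $t^2$ and $t^{2(p-1)}$ with $2(p-1)<2<\nu-2$, the negative term dominates. The function $g_\xi(t)=\frac A2t^2+\frac Bp t^{2(p-1)}-\frac{\xi}{\nu}Dt^{\nu-2}$ is positive only on $(0,t_\xi)$, where $t_\xi\to0$ as $\xi\to\infty$; indeed $t_\xi\le C\xi^{-1/(\nu-2-2(p-1))}$ in the small-$t$ regime. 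Hence
\[
\max g_\xi\le \tfrac A2t_\xi^2+\tfrac Bp t_\xi^{2(p-1)}\to0.
\]

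The main subtlety is the symmetrization step. One must check that $u^*\in E$ (that is, Pólya--Szegő holds in both $D^{1,2}$ and $D^{1,p}$) and that the fiber map commutes with rearrangement, so that $\int F(s\star u^*)=\int F(s\star u)$. Both are standard. The rest is elementary.
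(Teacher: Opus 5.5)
Your proof is correct in its two main parts. For $\varUpsilon_m=\varUpsilon_{m,r}$ it takes a shorter route than the paper.

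\textbf{Comparison with the paper.} The paper symmetrizes a minimizing sequence and uses $P(u_n^*)\le P(u_n)=0$ to get $s_{u_n^*}\le 0$. It then uses that $s\mapsto 2I(s\star u)-P(s\star u)=\frac{2(2-p)}{p}e^{2(p-1)s}\|\nabla u\|_p^p+e^{-2s}\int_{\R^2}K(e^su)\dif x$ is nondecreasing (Lemma \ref{lem2.6}(ii)) to pass down to $s=0$. There it compares $\frac{2-p}{p}\|\nabla u^*\|_p^p+\frac12\int K(u^*)$ with the same quantity for $u$.

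You instead compare fiber by fiber: $I(s\star u^*)\le I(s\star u)$ for every $s$. This follows from P\'olya--Szeg\H{o} in both gradient norms plus equimeasurability, since the dilation commutes with rearrangement. You then use that $u\in\cP$ maximizes its own fiber (Lemma \ref{lem2.7}(i)). This bypasses Lemma \ref{lem2.6}(ii) entirely. In fact, the facts $P(u^*)\le 0$ and $s_{u^*}\le 0$ that you derive are never used in your chain and can be dropped.

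For the asymptotics, you bound $\sup g_\xi$ by the positive terms evaluated at the right end $t_\xi$ of the positivity interval. This is equivalent to the paper's critical-point computation and spares you the claim that $g_\xi'$ has a unique zero. A minor point: $F(t)\ge\frac{\xi}{\nu}|t|^\nu$ follows from $(f_4)$ by integration and oddness alone; $F\ge 0$ plays no role.

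\textbf{The error: the positivity identity.} The identity in your positivity remark is miscomputed. The correct expansion is
\[
I(u)-\tfrac14P(u)=\tfrac14\|\nabla u\|_2^2+\tfrac{3-p}{2p}\|\nabla u\|_p^p+\tfrac14\int_{\R^2}\big(f(u)u-6F(u)\big)\dif x .
\]
Here $f(t)t-6F(t)$ has no sign under $(f_1)$--$(f_4)$. For example, $f(t)=\xi|t|^{\nu-2}t\,e^{\alpha_0 t^2}$ with $4<\nu<6$ satisfies all four assumptions, yet $f(t)t-6F(t)<0$ for small $t\neq 0$.

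The identity you want is
\[
I(u)=I(u)-\tfrac12P(u)=\tfrac{2-p}{p}\|\nabla u\|_p^p+\tfrac12\int_{\R^2}K(u)\dif x\ge 0 \quad\text{on } \cP .
\]
This gives $\varUpsilon_m\ge 0$, which is all the paper uses for ``$0^+$'' in this lemma.

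Note also that even a valid pointwise bound $I(u)\ge c\|\nabla u\|_2^2$ would not give $\varUpsilon_m>0$. You would also need a uniform lower bound on $\|\nabla u\|_2$ over $\cB_m\cap\cP$.

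If you want strict positivity, argue as in Lemma \ref{lem3.4}. For $u\in\cB_m\cap\cP$, choose $s$ with $e^{2s}\|\nabla u\|_2^2=\delta$. Then $I(u)\ge I(s\star u)\ge \frac{\delta}{2}-\int_{\R^2}F(s\star u)\dif x\ge\frac{\delta}{4}$ once $\delta=\delta(m)$ is small, by Lemma \ref{lem2.1} and the Gagliardo--Nirenberg inequality.
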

\begin{proof} We recall the definitions of $H(t)$ and  $K(t)$. Note that $F$ is even. Let $\left\{u_n\right\}\subset \cB_m \cap \mathcal{P}$ be  a minimizing sequence of $\varUpsilon_{m}$, then $\left\{|u_n|\right\}$ is also a minimizing sequence. Thus, without loss of generality, we may assume $u_n\geq 0$. Denote the Schwarz symmetrization of $\left\{u_n\right\}$ by $\left\{u_n^*\right\}$.  Then  $u_n^*$ satisfies that
\begin{align*}
	\|\nabla u_n^*\|^2_2\leq \|\nabla u_n\|^2_2,\quad  \|\nabla u_n^*\|^p_p\leq \|\nabla u_n\|^p_p, \quad \text{ and } \quad
	\|u_n^*\|_2^2 =\|u_n\|_2^2.
\end{align*}
From Lemma \ref{lem2.6}, both  $H(t)$ and $K(t)$ are monotonic functions in $(0,+\infty)$, it then follows that
\begin{equation*}
\int_{\mathbb R^2}H(u^*)\dif x=\int_{\mathbb R^2}H(u)\dif x, \text{ and }
\int_{\mathbb R^2}K(u^*)\dif x=\int_{\mathbb R^2}K(u)\dif x. 
\end{equation*}
Moreover, it is obvious that $u_n^* \in \cB_m \cap E_r$ and $P\left(u_n^*\right) \leq P\left(u_n\right)=0$.  By Lemma \ref{lem2.7}, there exists $s_n^*:=s_{u_n^*} \leq 0$ such that $P\left(s_n^* \star u_n^*\right)=0$.

On the other hand, for any $u\in \cB_m$ and  $s\in\mathbb{R}$, we have
\begin{align*}
	I(s\star u)- \frac{1}{2}P(s\star u)=&
	\frac{2-p}{p}e^{2(p-1)s}\|\nabla u\|^p_p+\frac{1}{2}e^{-2s}\int_{\mathbb{R}^2}K(e^su)\dif x\\
	=&\frac{2-p}{p}e^{2(p-1)s}\|\nabla u\|^p_p+\frac{1}{2}\int_{\mathbb{R}^2}\frac{K(e^su)}{e^{2s}u^2}u^2\dif x.
\end{align*}
This, together with Lemma \ref{lem2.6}, implies that  $2I(s\star u)-P(s\star u)$ is nondecreasing with respect to $s\in \mathbb{R}$. We thus conclude that
\begin{align*}
	\varUpsilon_m \leq \varUpsilon_{m,r} & \leq I\left(s_n^* \star u_n^*\right) =I\left(s_n^* \star u_n^*\right)-\frac{1}{2} P\left(s_n^* \star u_n^*\right) \\
	& \leq I\left(u_n^*\right)-\frac{1}{2} P\left(u_n^*\right) \\
	& =\frac{2-p}{p}\|\nabla u_n^*\|^p_p+\frac{1}{2}\int_{\mathbb{R}^2}K(u_n^*)\dif x \\
	&\leq \frac{2-p}{p}\|\nabla u_n\|^p_p+\frac{1}{2}\int_{\mathbb{R}^2}K(u_n)\dif x\\
	& =I\left(u_n\right)-\frac{1}{2}P(u_n)=I(u_n)=\varUpsilon_m+o_n(1),
\end{align*}
which implies that $\varUpsilon_m=\varUpsilon_{m,r}$.

We now prove the asymptotic behaviors of $\varUpsilon_{m}$ as $\xi\to+\infty$.
For $x\in \mathbb{R}^2$, we take
\begin{equation*}
	\zeta_0(x):=\Big(\frac{m}{\pi}\Big)^{\frac{1}{2}}e^{-\frac{\abs{x}^2}{2}},
\end{equation*}
 so that $\zeta_0\in E$ and  $\|\zeta_0\|_2^2=m$. We thus conclude that $\zeta_0\in \cB_m$. By using $(f_4)$, for any $s\in \mathbb{R}$, we deduce that
\begin{align*}
	I\left(s\star \zeta_0\right)\leq \frac{1}{2}e^{2s}\|\nabla\zeta_0\|^2_2+\frac{1}{p}e^{2(p-1)s}\|\nabla\zeta_0\|^p_p-\frac{\xi}{\nu}e^{(\nu-2)s}\int_{\mathbb{R}^2}|\zeta_0|^{\nu}\dif x.
\end{align*}
For  $ t\geq 0$ we set
\begin{equation*}
	g(t):=\frac{1}{2}A_1t^2+\frac{1}{p}A_2t^{2(p-1)}-\frac{\xi}{\nu}A_3t^{\nu-2},
\end{equation*}
with
\begin{align*}
	A_1:=\|\nabla\zeta_0\|^2_2,\quad  A_2:=\|\nabla\zeta_0\|^p_p,\quad \text{and } \quad  A_3:=\|\zeta_0\|_\nu^\nu.
\end{align*}
Then  we can easily verify that
\begin{equation*}
	g'(t)=A_1t+2(1-1/p)A_2t^{2p-3}-\xi(1-2/\nu)A_3t^{\nu-3},
\end{equation*}
and
\begin{align*}
	g''(t)=A_1+\frac{2(p-1)(2p-3)}{p}A_2t^{2(p-2)}-\frac{\xi(\nu-2)(\nu-3)}{\nu}A_3t^{\nu-4}.
\end{align*}
An elementary calculation shows that there exists a unique $t_\xi>0$ such that $g'(t_\xi)=0$, and $g''(t_\xi)<0$.
Therefore, $g(t)$ achieves the unique maximum at $t_\xi$ and
\begin{align*}
	g(t_\xi)=\frac{2-p}{p}A_2t_\xi^{2p-2}+\frac{\xi(\nu-4)}{2\nu}A_3t_\xi^{\nu-2}>0.
\end{align*}
Moreover, from $g'(t_\xi)=0$, we have $t_\xi=O(\xi^{-\frac{1}{\nu-2p}})  $ as $ \xi \to +\infty$. It then follows that
\begin{align*}
	g(t_0)=C_1\xi^{-\frac{2p-2}{\nu-2p}}+C_2\xi^{1-\frac{\nu-2}{\nu-2p}}
	=O(\xi^{-\frac{2p-2}{\nu-2p}}),\quad \text{as\ } \xi \to +\infty.
\end{align*}
Finally, in view of  Lemma \ref{lem2.7} and since $\nu>4>2p>2$, we derive that, as $\xi \to +\infty$,
\begin{equation*}
\varUpsilon_m\leq I\left(s_{\zeta_0}\star \zeta_0\right)\leq g(t_0)=O(\xi^{-\frac{2p-2}{\nu-2p}})\to 0.
\end{equation*}
\end{proof}
\begin{remark}\label{rem34}
Recalling the minimization problem \eqref{minipoho}, we  analogously define  the minimization problem
\begin{equation*}\label{minphoradial}
	\gamma_{m,r}=\inf_{u\in \cP_m\cap E_r}I(u).
\end{equation*}
Repeating the same arguments as above, we obtain $\gamma_m=\gamma_{m,r}$.
Consequently, we readily verify that 
\begin{equation*}
	0<\varUpsilon_{m}=\varUpsilon_{m,r}\leq \gamma_{m,r}=\gamma_{m}<+\infty.
\end{equation*}
On the other hand,
since $\zeta_0\in \mathcal{S}_m$, we may apply the preceding argument once again to conclude that  $\gamma_m\to 0^+$ as $\xi\to+\infty$.  As a result, the following two Lemmas \ref{lem3.3}-\ref{lem3.4} also hold for any minimizing sequence of $\gamma_m$ for sufficiently large $\xi$.
\end{remark}

\begin{lemma}\label{lem3.3}
If $\{u_n\}\subset \cB_m\cap\mathcal{P}$ is a minimizing sequence of $\varUpsilon_{m}$, then there exist $\beta_0>0$ and $\bar{\xi}_0>0$ independent of $n$ such that $\|u_n\|^2_{2,p}\leq\beta_0$ for all $\xi\geq \bar{\xi}_0$.
\end{lemma}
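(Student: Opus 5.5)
We exploit the identity $I(u)-\frac12 P(u)=\frac{2-p}{p}\|\nabla u\|_p^p+\frac12\int_{\mathbb R^2}K(u)\dif x$, which was already used in the proof of Lemma \ref{lem3.5}. Since $u_n\in\mathcal P$, we have $P(u_n)=0$. By Lemma \ref{lem2.6}(iii), $K\ge 0$. Hence for $n$ large
\begin{equation*}
\frac{2-p}{p}\|\nabla u_n\|_p^p+\frac12\int_{\mathbb R^2}K(u_n)\dif x=I(u_n)\le \varUpsilon_m+1 .
\end{equation*}
By Lemma \ref{lem3.5}, $\varUpsilon_m\to0^+$ as $\xi\to+\infty$. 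So there is $\bar\xi_0$ such that $\varUpsilon_m\le 1$ for $\xi\ge\bar\xi_0$. This gives a uniform bound on $\|\nabla u_n\|_p^p$, namely $\le 2p/(2-p)$, independent of $n$ and of $\xi\ge\bar\xi_0$. The remaining task is to bound $\|\nabla u_n\|_2^2$ uniformly.

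For the $L^2$-gradient term, use $I(u_n)=\frac12\|\nabla u_n\|_2^2+\frac1p\|\nabla u_n\|_p^p-\int F(u_n)$. We need to control $\int F(u_n)$ by a quantity bounded in terms of $\int K(u_n)$ and $\|u_n\|_2^2\le m$. I would first try to prove a pointwise inequality of the form
\[
F(t)\le C\,K(t)+\delta t^2 \quad\text{or}\quad 2F(t)\le \theta\, H(t)\ \text{for some }\theta<1 .
\]
The monotonicity in Lemma \ref{lem2.6}(ii) says $K(t)/t^2$ is nondecreasing on $(0,\infty)$. Together with $H=K+2F$ and $(f_3)$, this gives growth information. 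Indeed, $(f_3)$ implies $H(t)/t^4$ is nondecreasing, hence $f(t)t-2F(t)\ge0$ grows at least like $t^4$, so $F$ is superquartic in a weak sense. Concretely, I would integrate $(f_3)$ to get $H(t)\ge 4\int_0^t H(s)/s\,ds$. From $K=H-2F$ and $F(t)=\int_0^t f$, I would try to derive $K(t)\ge c F(t)$ for $|t|$ large. Small $|t|$ is handled by $F(t)\le Ct^2$ and $\|u_n\|_2^2\le m$. Then $\int F(u_n)\le C(m)+C\int K(u_n)\le C'$, and $\frac12\|\nabla u_n\|_2^2\le I(u_n)+\int F(u_n)\le C''$.

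If the pointwise comparison $F\lesssim K$ fails to follow cleanly from $(f_1)$–$(f_3)$, the alternative is a contradiction/scaling argument. Suppose $\|\nabla u_n\|_2\to\infty$ with $\|\nabla u_n\|_p$ and $\|u_n\|_2$ bounded. Set $s_n=-\log\|\nabla u_n\|_2$ and $v_n=s_n\star u_n$, so $\|\nabla v_n\|_2=1$, $\|v_n\|_2^2\le m$ and $\|\nabla v_n\|_p\to0$. By Lemma \ref{lem2.7}(i), $u_n$ maximizes $s\mapsto I(s\star u_n)$. Hence for any fixed $t$,
\[
I(u_n)\ge I((t+s_n)\star u_n)=I(t\star v_n).
\]
For small $t$, write $t\star v_n=e^{t}v_n(e^t\cdot)$. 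Then $\|\nabla(t\star v_n)\|_2^2=e^{2t}$ is below the Trudinger–Moser threshold, so Lemma \ref{lem2.1} (last part, with $M^2=m$) and the estimate \eqref{f2.4} give $\int F(t\star v_n)\le \varepsilon e^{2t}m+C\|t\star v_n\|_{2r}^r$. Choosing $t$ suitably (depending on $\alpha_0$ and $m$) we get $I(t\star v_n)\ge \frac14 e^{2t}-o(1)$, i.e.\ a positive constant. This would be used to contradict $I(u_n)\to\varUpsilon_m$ small once $\xi\ge\bar\xi_0$. In the radial setting (Lemma \ref{lem3.5}), the $L^{2r}$ term vanishes along a subsequence with $v_n\to0$ weakly, because $\|\nabla v_n\|_p\to0$ forces the weak limit to vanish.

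The main obstacle is this last step: controlling $\int F$ at the critical exponential level. That is exactly why $\xi$ large is needed, so that $\varUpsilon_m$ lies below the positive energy threshold given by Trudinger–Moser. I expect to compare $\varUpsilon_m$ with a constant like $\frac{1}{4}e^{2t_0}$ where $e^{2t_0}\alpha_0<4\pi$, and to choose $\bar\xi_0$ accordingly. With $\beta_0$ collecting the bounds on both gradient norms, this gives $\|u_n\|_{2,p}^2\le\beta_0$.
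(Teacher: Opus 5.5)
Your first step is sound and sharper than anything in the paper's proof. Since $P(u_n)=0$ and $K\ge 0$, the identity $I(u_n)=I(u_n)-\frac12P(u_n)=\frac{2-p}{p}\|\nabla u_n\|_p^p+\frac12\int_{\R^2}K(u_n)\dif x$ bounds $\|\nabla u_n\|_p$ along every minimizing sequence. This makes the paper's first case (ratio $\|\nabla u_n\|_2/\|\nabla u_n\|_p$ bounded) impossible. After your rescaling it also gives $\|\nabla v_n\|_p\to 0$, which rules out the non-vanishing alternatives that the paper treats with $(f_4)$ and Fatou's lemma. What remains in your second route is exactly the paper's vanishing subcase: $I(u_n)\ge I(t\star v_n)\ge \frac14 e^{2t}-o_n(1)$ with $t$ independent of $f$, which contradicts $\varUpsilon_m\to 0^+$. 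So your strategy is genuinely shorter than the paper's two-case split, and it uses $(f_4)$ only through Lemma~\ref{lem3.5}.

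There is, however, a gap at the step you yourself flag as crucial. The $o_n(1)$ requires $\|v_n\|_{2r}\to 0$, and you justify this only for radial sequences. The lemma concerns an arbitrary minimizing sequence in $\cB_m\cap\cP$, and Lemma~\ref{lem3.5} does not let you pass to radial functions here. Schwarz symmetrization decreases gradient norms, so a bound on $\|u_n^*\|_{2,p}$ says nothing about $\|u_n\|_{2,p}$. Without vanishing, the term $C_\varepsilon e^{(r-1)t}\|v_n\|_{2r}^r$ is merely bounded. Absorbing it forces $t$ to depend on $C_\varepsilon$, hence on $f$, and the resulting positive threshold can no longer be compared with $\varUpsilon_m\to0^+$.

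The fix uses your own observation. By the Sobolev inequality in $D^{1,p}(\R^2)$, $\|v_n\|_{p^*}\le S\|\nabla v_n\|_p\to 0$. Interpolate with the bounded $L^2$ norm and the Gagliardo--Nirenberg bound on $\|v_n\|_q$ (recall $\|\nabla v_n\|_2=1$). This gives $\|v_n\|_k\to 0$ for every $k>2$ with no symmetry needed. Alternatively, every translated weak limit has zero gradient, so Lions' lemma applies.

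Also, do not rely on your first route. Integrating $(f_3)$ only shows that $K(t)/t^2$ is nondecreasing. That still allows $F/K$ to be arbitrarily large on intervals where $K/t^2$ is flat. A comparison $F\le CK+\delta t^2$ needs the two-sided growth in $(f_2)$, and even then the constants depend on $f$, so the sketch is not a proof.
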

\begin{proof}
By contradiction, we assume that $\|u_n\|^2_{2,p}\to +\infty$ as $n\to\infty$ for some $\xi>0$. Then, up to a subsequence, two scenarios arise: either 
 \begin{equation*}
 	\lim_{n\to \infty} \|\nabla u_n\|_2 \|\nabla u_n\|_p^{-1}=C_0\in[0,+\infty), \text{  or  } \lim_{n\to \infty} \|\nabla u_n\|_2 \|\nabla u_n\|_p^{-1}=+\infty.
 \end{equation*}

For the former case, let $0<\delta_0\leq (2C_0^2+1)^{-1/2}$ and select
$s_n\in \R$ for  $n\geq 1$ satisfying
\begin{align*}
	\frac{p}{2p-2}\ln\left(\|u_n\|_{2,p}\right)\leq s_n \leq 	\frac{p}{2p-2}\ln\left(\delta_0^{-1}\|\nabla u_n\|_p\right).
\end{align*}
It follows that $s_n\to+\infty$. By setting $\omega_n:=(-s_n)\star u_n$, we deduce that
\begin{align*}
	\delta_0^2&\leq  e^{(\frac{4}{p}-4)s_n}\|\nabla u_n\|^2_p \leq \|\omega_n\|^2_{2,p}&\\
	&= e^{-2s_n}\|\nabla u_n\|^2_2+e^{(\frac{4}{p}-4)s_n}\|\nabla u_n\|^2_p
	\leq	e^{(\frac{4}{p}-4)s_n}\|u_n\|^2_{2,p}\leq 1.
\end{align*}

Set \begin{equation*}
	\sigma:=\limsup_{n\to \infty}\Big(\sup \limits_{y\in\mathbb{R}^2}\int_{B(y,1)}|\omega_n|^2\dif x\Big).
\end{equation*}
If $\sigma>0$, then, up to a subsequence, there exists $\{y_n\}\subset\mathbb{R}^2$ and $v_0\in E$ with $v_0\not\equiv0$ such that
$v_n:=\omega_n(\cdot+y_n)\rightharpoonup v_0$ weakly in  $E$, $v_n\to v_0$ strongly in $L^\nu(\mathbb R^2)$, and  $v_n\to v_0$ almost everywhere in $\mathbb R^2$. By  combining $(f_4)$ and the Fatou lemma, we deduce that
\begin{align*}
	&	\varUpsilon_m+o_n(1)=I\left(s_n\star v_n\right)\\
	&=\frac{1}{2}e^{2s_n}\|\nabla v_n\|^2_2+\frac{1}{p}e^{2(p-1)s_n}\|\nabla v_n\|^p_p-e^{-2s_n}\int_{\mathbb{R}^2}F(e^{s_n}v_n)\dif x\\
	&\leq e^{2s_n}\Big( \frac{1}{2}-\frac{1}{2}\|\nabla v_n\|^2_p+\frac{1}{p}e^{2(p-2)s_n}\|\nabla v_n\|^p_p-\frac{\xi}{\nu}e^{(\nu-4)s_n}\|v_0\|_{\nu}^{\nu} \Big)\to -\infty .
\end{align*}
This is in contradiction to Lemma \ref{lem3.5}.

On the other hand, if $\sigma=0$, we  define $z_n:=(2\alpha_0/{\pi})^{1/2}(s\star \omega_n)$.
When
\begin{equation*} s\leq s_0:= \min\left\{0,\frac{p}{4p-4}\ln\left(\pi/{(2\alpha_0)}\right)\right\},
\end{equation*}
we deduce
\begin{align*}
	\|z_n\|^2_{2,p}&=\frac{2\alpha_0}{\pi}\left(e^{2s}\|\nabla \omega_n\|^2_2+e^{(4-\frac{4}{p})s}\|\nabla \omega_n\|^2_p\right)\\
	&\leq \frac{2\alpha_0}{\pi}\left(e^{2s}\|\nabla \omega_n\|^2_2+e^{(4-\frac{4}{p})s}\left(1-\|\nabla \omega_n\|^2_2\right)\right)\\
	&=\frac{2\alpha_0}{\pi}e^{(4-\frac{4}{p})s}+\frac{2\alpha_0}{\pi}\|\nabla \omega_n\|^2_2\left(e^{2s}-e^{(4-\frac{4}{p})s}\right)\leq 1.
\end{align*}
This, together with Lemma $\ref{lem2.1}$ implies that  there exists $C_1>0$ such that
\begin{align*}
	\int_{\mathbb{R}^2}\varPhi_{4\alpha_0,j_0}\left(s\star \omega_n\right)\dif x
	&=\int_{\mathbb{R}^2}\varPhi_{2\pi,j_0}(z_n)\dif x\leq C^2_1.
\end{align*}
Using similar arguments as in Lemma $\ref{lem2.4}$, for $\alpha=2\alpha_0$ and $r>p^*$, and for any $\varepsilon>0$,  there exists  $C_{\varepsilon}>0$ such that
\begin{align*}
	\int_{\mathbb{R}^2}F(u)\dif x
	\leq\varepsilon \|u\|^{4}_{4}+C_{\varepsilon}\|u\|_{2r}^r \left(\int_{\mathbb{R}^2}\varPhi_{4\alpha_0,j_0}(u)\dif x\right)^{1/2}.
\end{align*}
Since $\sigma=0$, 	from Lions' lemma \cite{W96}, we have $\omega_n\to 0$ strongly in $L^k(\mathbb{R}^2)$ for any $k > 2$.  Therefore, for fixed $s\leq s_0$, we deduce that, as $n\to\infty$,
\begin{align*}
	\int_{\mathbb{R}^2}|F(s\star \omega_n)|\dif x
	&\leq \varepsilon e^{2s}  \|\omega_n\|^{4}_{4}+C_{\varepsilon}C_1 e^{(r-1)s}\|\omega_n\|_{2r}^{r}\to 0 .
\end{align*}
Consequently, we can utilize  Lemma $\ref{lem2.7}$ to conclude that
\begin{align*}
	\varUpsilon_m+o_n(1)&=I(u_n) \geq I((s_0-s_n)\star u_n)\\
	&=\frac{1}{2}e^{2s_0}\|\nabla \omega_n\|^2_2+\frac{1}{p}e^{2(p-1)s_0}\|\nabla \omega_n\|^p_p-\int_{\mathbb{R}^2}F(s_0\star \omega_n)\dif x\\
	&\geq \frac{1}{2}e^{2s_0}\|\nabla \omega_n\|^2_2+\frac{1}{p}e^{2(p-1)s_0}\|\nabla \omega_n\|^2_p+o_n(1)\\
	&\geq\frac{1}{2}e^{2s_0}\|\omega_n\|^2_{2,p}+o_n(1)\\
	&\geq \min\Big\{\frac{1}{2}\delta^2_0,\frac{1}{2}\delta^2_0\Big(\frac{\pi}{2\alpha_0}\Big)^{\frac{p}{2p-2}}\Big\}
	+o_n(1),
\end{align*}
which also contradicts with Lemma \ref{lem3.5} when $\xi$ becomes sufficiently large.

For the latter case,  we still define  $\omega_n=(-s_n)\star u_n$ where $s_n\in \R$ satisfies
\begin{equation*}s_n=\ln\left(\|\nabla u_n\|_2\right).\end{equation*}
It is evident that $\|\nabla \omega_n\|_2=e^{-s_n}\|\nabla u_n\|_2=1$ and  $\lim_{n\to\infty}s_n= +\infty$. In addition, for sufficiently large $n$,  we have
\begin{align*}
	\|\nabla \omega_n\|^p_p=e^{(2-2p)s_n}\|\nabla u_n\|^p_p\leq e^{(2-2p)s_n}\|\nabla u_n\|^p_2.
\end{align*}
Similarly, as in the previous case, we examine again the concentration limit behavior of $\{\omega_n\}$. Assuming $\sigma>0$, then, up to a subsequence and a translation, there exists $u_0\in E\setminus\{0\}$ such that $u_n\to u_0$ strongly in $L^\nu(\mathbb R^2)$, and  $u_n\to u_0$ almost everywhere in $\mathbb R^2$. Applying an analogous argument with reliance on  $(f_4)$ alongside the Fatou lemma, we deduce that
\begin{align*}
	\varUpsilon_m+o_n(1)=I(u_n)&=I\left(s_n\star \omega_n\right)\\
	&=\frac{1}{2}e^{2s_n}\|\nabla\omega_n\|^2_2+\frac{1}{p}e^{2(p-1)s_n}\|\nabla\omega_n\|^p_p-e^{-2s_n}\int_{\mathbb{R}^2}F(e^{s_n}\omega_n)\dif x\\
	&\leq e^{2s_n}\Big(\frac{1}{2}+\frac{1}{p}\|\nabla u_n\|^{p}_2-\frac{\xi}{\nu}e^{(\nu-4)s_n}\|v_0\|_{\nu}^{\nu}\Big) \to -\infty .
\end{align*}
This, combined with Lemma \ref{lem3.5}, would lead to an absurdity for
sufficiently large $\xi$.
Finally,  if $\sigma=0$, we have $\omega_n\to 0$ strongly in $L^k(\R^2)$ for any $k>2$.
Again, we set $z_n:=({2\alpha_0}/{\pi})^{1/2}(s\star \omega_n)$. We are ready to verify that for all $ s\leq s_0:= \ln\left({\pi}/{(2\alpha_0)}\right)^{1/2}$,
\begin{align*}
	\|\nabla z_n\|^2_{2} =\frac{2\alpha_0}{\pi}e^{2s}\|\nabla \omega_n\|^2_2
	\leq 1, \text{ and }\
	\|z_n\|_2^2=\frac{2\alpha_0}{\pi}\|\omega_n\|^2_2\leq \frac{2\alpha_0m}{\pi}.
\end{align*}
From Lemma $\ref{lem2.1}$, we find that for all $s\leq s_0$, there exists $C_2>0$ such that
\begin{align*}
	\int_{\mathbb{R}^2}\big(e^{4\alpha_0|s\star\omega_n|^2}-1\big)\dif x=\int_{\mathbb{R}^2}\big(e^{2\pi|z_n|^2}-1\big)\dif x\leq C^2_2.
\end{align*}
We thus have by $(f_1)$-$(f_2)$ that, as $n\to\infty$,
\begin{align*}
	\int_{\mathbb{R}^2}|F(s\star \omega_n)|\dif x\leq \varepsilon e^{2s}  \|\omega_n\|^{4}_{4}+C_{\varepsilon}C_2e^{(r-1)s}\|\omega_n\|_{2r}^r\to 0 .
\end{align*}
By invoking Lemma $\ref{lem2.7}$, we obtain that
\begin{align*}
	\varUpsilon_m+o_n(1)=I(u_n)\geq& I((s_0-s_n)\star u_n)=I(s_0\star \omega_n)\\
	=&\frac{1}{2}e^{2s_0}\|\nabla \omega_n\|^2_2+\frac{1}{p}e^{2(p-1)s_0}\|\nabla \omega_n\|^p_p-\int_{\mathbb{R}^2}F(s_0\star \omega_n)\dif x\\
	\geq& \frac{1}{2}e^{2s_0}\|\nabla \omega_n\|^2_2+o_n(1)=\frac{\pi}{4\alpha_0}+o_n(1),
\end{align*}
which contradicts to Lemma \ref{lem3.5} again for largely $\xi$.
\end{proof}

\begin{lemma}\label{lem3.4}
If $\{u_n\}\subset \cB_m\cap\mathcal{P}$ is a minimizing sequence of $\varUpsilon_{m}$, then there exists  $\bar{\xi}\geq \bar{\xi}_0>0$  such that  for all $n\in \mathbb N$,
\begin{equation*}\|\nabla u_n\|_{2}^2\leq \frac{\pi}{2\alpha_0}
	\end{equation*}
whenever $\xi\geq \bar{\xi}$.
\end{lemma}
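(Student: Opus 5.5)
Since every $u_n\in\cP$, the energy can be rewritten without the nonlinearity's growth appearing explicitly. Precisely, using $P(u_n)=0$ as in the proof of Lemma \ref{lem3.5}, we have
\begin{equation*}
I(u_n)=I(u_n)-\frac14 P(u_n)=\frac14\|\nabla u_n\|_2^2+\frac{2-p}{2p}\|\nabla u_n\|_p^p+\frac14\int_{\mathbb R^2}\big(f(u_n)u_n-2F(u_n)\big)\dif x-\int_{\mathbb R^2}F(u_n)\dif x .
\end{equation*}
Rearranging gives $\frac14\int (f(u_n)u_n-6F(u_n))$, which has no obvious sign. So I would rather combine the identity with the subtraction by $\frac12 P$ used in Lemma \ref{lem3.5}. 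That gives
\begin{equation*}
I(u_n)=\frac{2-p}{p}\|\nabla u_n\|_p^p+\frac12\int_{\mathbb R^2}K(u_n)\dif x ,
\end{equation*}
which does not control $\|\nabla u_n\|_2$ directly. Hence the plan is a contradiction argument modelled on the proof of Lemma \ref{lem3.3}, using the uniform bound $\|u_n\|_{2,p}^2\le\beta_0$ (for $\xi\ge\bar\xi_0$) together with $\varUpsilon_m\to0^+$ as $\xi\to\infty$ from Lemma \ref{lem3.5}.

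Concretely, fix $\xi\ge\bar\xi_0$ and suppose $\|\nabla u_n\|_2^2>\pi/(2\alpha_0)$ along a subsequence. By Lemma \ref{lem2.7}(i) each $u_n\in\cP$ satisfies $I(u_n)=\max_s I(s\star u_n)$. Hence for any $s<0$ we have $I(u_n)\ge I(s\star u_n)$. I choose $s=s_n<0$ with $e^{2s_n}\|\nabla u_n\|_2^2=\pi/(2\alpha_0)$, i.e.\ I rescale so that the Dirichlet norm equals exactly the threshold. Then $\|\nabla(s_n\star u_n)\|_2^2=\pi/(2\alpha_0)$ and $\|s_n\star u_n\|_2^2\le m$. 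Moreover $\|\nabla(s_n\star u_n)\|_p^p=e^{2(p-1)s_n}\|\nabla u_n\|_p^p\le\|\nabla u_n\|_p^p\le\beta_0^{p/2}$. In particular the rescaled functions remain bounded in $E$ uniformly in $n$ and $\xi$.

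Next I bound $\int F(s_n\star u_n)$ from above. Applying Lemma \ref{lem2.1} to $\sqrt{\alpha/\pi}\,(s_n\star u_n)$ with $\alpha$ slightly bigger than $\alpha_0$ but below $2\alpha_0$ keeps the Dirichlet norm below $1$, so $\int(e^{\alpha|s_n\star u_n|^2}-1)$ is bounded uniformly. Combining this with \eqref{f2.4} and Lemma \ref{lem2.2} gives
\begin{equation*}
\int F(s_n\star u_n)\le \varepsilon\|s_n\star u_n\|_4^4+C\|s_n\star u_n\|_{2r}^r .
\end{equation*}
This bound is uniform but not small, and making it small is the main obstacle. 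To get smallness I would exploit that $t_n:=\|\nabla u_n\|_2^2$ ranges over $[\pi/(2\alpha_0),\beta_0]$. I would therefore use instead a smaller fixed scale: take $s$ with $e^{2s}t_n=\eta$ for a small $\eta>0$. The nonlinear term is then $O(\eta^{2})+O(\eta^{(r-2)/2})$ via Gagliardo--Nirenberg-type bounds in terms of $\|\nabla\cdot\|_2$, $\|\nabla\cdot\|_p$ and the mass. With this choice, $I(s\star u_n)\ge \frac{\eta}{2}-o(\eta)\ge\eta/4$ for $\eta$ small, and the lower bound is independent of $\xi$ because $f$ enters only through the growth estimates. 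Note that $(f_4)$ merely increases $F$, so it does not spoil the upper bound on $F$ coming from $(f_1)$--$(f_2)$; the constant depends only on $f$'s growth. In fact this step is subtle: the paper's $\xi$ multiplies the lower bound on $f$, so I would assume, as the paper implicitly does in Lemma \ref{lem3.3}, that the upper growth constants are fixed.

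Since $\varUpsilon_m+o_n(1)=I(u_n)\ge\eta/4$, choosing $\bar\xi\ge\bar\xi_0$ so that $\varUpsilon_m<\eta/4$ for $\xi\ge\bar\xi$ (possible by Lemma \ref{lem3.5}) yields a contradiction for large $n$. For the finitely many remaining indices, I pass to a tail of the minimizing sequence, which is harmless.
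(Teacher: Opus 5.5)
Your argument is correct and close in spirit to the paper's, but it chooses the rescaling differently. The shared ingredients are $I(u_n)=\max_s I(s\star u_n)$ from Lemma~\ref{lem2.7}, a Trudinger--Moser plus Gagliardo--Nirenberg estimate making $\int F$ a small fraction of the Dirichlet term at small Dirichlet level, and $\varUpsilon_m\to0^+$ from Lemma~\ref{lem3.5}.

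The paper uses a single dilation $t_0-t_k$, independent of $n$. To know that $(t_0-t_k)\star u_n$ has small Dirichlet norm, it needs the a priori bound $\|\nabla u_n\|_2^2\le\beta_0$ from Lemma~\ref{lem3.3}. It then obtains the quantitative estimate $\|\nabla u_n\|_2^2\le\frac{16\alpha_0}{\pi}(\beta_0+1+k)^2\varUpsilon_m$ for large $n$.

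You instead rescale each $u_n$ so that $\|\nabla(s_n\star u_n)\|_2^2=\eta$. This gives a threshold statement: every $u\in\cB_m\cap\cP$ with $\|\nabla u\|_2^2\ge\eta$ satisfies $I(u)\ge\eta/4$. Your version does not actually need Lemma~\ref{lem3.3}. After normalization, Lemma~\ref{lem2.1} applies to $\sqrt{\alpha/\pi}\,(s_n\star u_n)$ with $\alpha\in(\alpha_0,2\alpha_0)$: its squared Dirichlet norm is $\alpha\eta/\pi<1$ and its mass is at most $\alpha m/\pi$. The Gagliardo--Nirenberg bounds in $H^1(\R^2)$ likewise involve only $\eta$ and $m$, and the $p$-term is nonnegative and can be dropped. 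So your appeals to $\beta_0$ and to $t_n\in[\pi/(2\alpha_0),\beta_0]$ are superfluous. The paper's route buys an explicit linear rate in $\varUpsilon_m$; yours buys independence from Lemma~\ref{lem3.3} and a cleaner statement.

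Two corrections.

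First, the quartic term is not $O(\eta^2)$. For $w=s_n\star u_n$, Gagliardo--Nirenberg gives $\|w\|_4^4\le C\|\nabla w\|_2^2\|w\|_2^2\le Cm\eta$, which is only linear in $\eta$. The smallness must therefore come from the $\varepsilon$ in $|F(t)|\le\varepsilon|t|^4+C_\varepsilon|t|^r(e^{\alpha t^2}-1)$. Fix $\varepsilon$ with $\varepsilon Cm\le 1/8$ first. Only then choose $\eta$ so small that $C_\varepsilon C'\eta^{(r-1)/2}\le\eta/8$, taking $r>3$ and using $\|w\|_{2r}^r\le C\eta^{(r-1)/2}m^{1/2}$. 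This gives $\int F(w)\le\eta/4$, hence $I(w)\ge\eta/4$ as you want.

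Second, your remark that $(f_4)$ ``does not spoil the upper bound on $F$'' is backwards. $(f_4)$ forces $F(t)\ge\xi|t|^\nu/\nu$, so $C_\varepsilon$, and hence your $\eta$, cannot literally be independent of $\xi$. The paper's proof makes the same tacit assumption: its $C_\varepsilon$ in \eqref{eq3.2} and its choice of $k$ are treated as $\xi$-independent. So this is a caveat shared with the paper, not a defect specific to your route.
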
	
The idea for the proof is inspired by \cite{CLY23}, yet we provide rigorous details for completeness and to enhance readability.
\begin{proof}
Let $t_0:=\ln\left({\pi}/{(2\alpha_0)}\right)^{1/2}$ and $t_k:=\ln\left(1+\beta_0+k\right)$, where $k\geq 1$ will be determined later and $\beta_0$ is defined as in Lemma $\ref{lem3.3}$. Considering $(f_1)$-$(f_2)$, for any $\varepsilon>0$, there exists $C_\varepsilon>0$ such that
\begin{equation}\label{eq3.2}
	|F(t)| \leq \varepsilon|t|^4+\varepsilon|t|^{\nu+1}\big(e^{2\alpha_0 t^2}-1\big)+C_\epsilon|t|^{\nu},\  \forall t \in \mathbb{R}.
\end{equation}
Setting
$v_n:= ({2 \alpha_0}/{\pi})^{1/2}((t_0-t_k)\star u_n)$, we are ready to verify that
\begin{align*}
	\|v_n\|_2^2 \leq \frac{2 \alpha_0 m}{\pi}, \; \text{ and }\; \|\nabla v_n \|_2^2=\frac{2 \alpha_0}{\pi}e^{2\left(t_0-t_k\right)}  \|\nabla u_n\|^2_2 \leq \frac{\|\nabla u_n\|^2_2}{(\beta_0+1)^2} \leq 1  .
\end{align*}
By employing Lemma $\ref{lem2.1}$, we deduce there exists $C_0>0$ such that
\begin{align*}
	\int_{\mathbb{R}^2}\big(e^{4 \alpha_0\left|\left(t_0-t_k\right) \star u_n\right|^2}-1\big) \dif x=\int_{\mathbb{R}^2}\big(e^{2 \pi\left|v_n\right|^2}-1\big) \dif x \leq C^2_0.
\end{align*}
Combing the continuous embedding $H^{1}(\R^2)\hookrightarrow L^k(\R^2)$ for $k\geq 2$ and  the
Gagliardo--Nirenberg interpolation inequality in $H^1(\R^2)$ that for any $r\in (2,\infty)$, there exists $C_{GN}=C(r)$ such that
\begin{equation*}\|u\|_r^r\leq C_{GN}\|\nabla u\|_2^{r-2}\|u\|_2^2.\end{equation*}
We then obtain $C_1, C_2,C_3>0$ that depend on $m$ but are independent of $n$, such that for $\{u_n\}\subset \cB_m$,
\begin{align*}
	&\|u_n\|^4_4 \leq C_1  \|\nabla u_n\|^2_2,\|u_n\|^{2\nu+2}_{2\nu+2}  \leq C^2_2 \|\nabla u_n\|^{2\nu}_2,\|u_n\|^{\nu}_{\nu}  \leq C_3 \|\nabla u_n\|^{\nu-2}_2.
\end{align*}
Therefore, in view of $\eqref{eq3.2}$, Lemma $\ref{lem3.3}$ and  the fact that $\nu>4$, for any $\varepsilon\in (0,1)$, we conclude that
\begin{align*}\label{3.3}
	& \int_{\mathbb{R}^2}\left|F\left(\left(t_0-t_k\right) \star u_n\right)\right| \dif x &\nonumber\\
	\leq& \varepsilon e^{2\left(t_0-t_k\right)} C_1  \left\|\nabla u_n\right\|_2^2+\varepsilon e^{\nu\left(t_0-t_k\right)}C_0 C_2  \left\|\nabla u_n\right\|_2^{\nu} +e^{(\nu-2)\left(t_0-t_k\right)}C_{\varepsilon}C_3\left\|\nabla u_n\right\|_2^{\nu-2} &\nonumber\\
	\leq& C_4 e^{2\left(t_0-t_k\right)}\left\|\nabla u_n\right\|_2^2 ,
\end{align*}
where
\begin{align*}
	C_4:&= \varepsilon C_1+\Big(\frac{\pi}{2 \alpha_0}\Big)^{\frac{\nu-2}{2}}\frac{ \varepsilon C_0 C_2 \beta_0^{\frac{\nu-2}{2}}}{(\beta_0+1+k)^{\nu-2}}+\Big(\frac{\pi}{2 \alpha_0}\Big)^{\frac{\nu-4}{2}}\frac{C_{\varepsilon}C_3\beta_0^{\frac{\nu-4}{2}}}{(\beta_0+1+k)^{\nu-4}}\\
	&\leq   \varepsilon C_1+\varepsilon C_0 C_2 \Big(\frac{\pi}{2k \alpha_0}\Big)^{\frac{\nu-2}{2}}  + C_{\varepsilon}C_3 \Big(\frac{\pi}{2k \alpha_0}\Big)^{\frac{\nu-4}{2}}
\end{align*}
Note that $\nu>4$.  Fixing small $\varepsilon$ and taking sufficiently large $k$ such that $C_4\leq 1/4$ so that we have
\begin{align*}
	\int_{\mathbb{R}^2}\left|F\left(\left(t_0-t_k\right) \star u_n\right)\right| \dif x \leq \frac{1}{4} e^{2\left(t_0-t_k\right)}\left\|\nabla u_n\right\|_2^2.
\end{align*}
In light of Lemma \ref{lem2.7}, we then derive by fixing the choice of the above $k$ that
\begin{align*}
	\varUpsilon_m&+o_n(1)  =I\left(u_n\right) \geq I\left(\left(t_0-t_k\right) \star u_n\right) \\
	& =\frac{1}{2} e^{2\left(t_0-t_k\right)} \left\|\nabla u_n\right\|^2_2 +\frac{1}{p}e^{2(p-1)(t_0-t_k)}\left\|\nabla u_n\right\|^p_p-\int_{\mathbb{R}^2} F\left(\left(t_0-t_k\right) \star u_n\right) \dif x \\
	& \geq
	\frac{1}{4} e^{2\left(t_0-t_k\right)}\left\|\nabla u_n\right\|_2^2.
\end{align*}
Regarding Lemma $\ref{lem3.5}$, there exists sufficiently large $n_0$, such that
\begin{align*}
	\left\|\nabla u_n\right\|_2^2 \leq 8 \varUpsilon_m e^{-2\left(t_0-t_k\right)} \leq  \frac{16 \alpha_0}{\pi}(\beta_0+1+k)^2\varUpsilon_{m}, \quad \forall n \geq n_0.
\end{align*}
Note that $\varUpsilon_{m}\to 0^+$ as $\xi\to+\infty$. Therefore,  by omitting the first finitely many terms if necessary, we can find a sufficiently large $\bar{\xi}\geq \bar{\xi}_0>0$  such that 
\begin{equation*}
	\|\nabla u_n\|^2_2\leq \frac{\pi}{2\alpha_0},
\end{equation*} 
for all $n\in \mathbb N$ whenever $\xi\geq\bar{\xi}_0$.
\end{proof}

\begin{remark}
It follows from the previous argument that
$\|\nabla u_n\|_2$  becomes arbitrarily small as $\xi$ grows sufficiently large. A similar argument applies to $\|\nabla u_n\|_p$, and hence  $\|u_n\|_{2,p}\to 0$ as $\xi\to+\infty$.  Combining Lemma~ \ref{lem2.7}, we conclude that the lower bound $\tau_0$ for $u\in \cB_m\cap \mathcal{P}$ actually depends on $\xi$.
\end{remark}

Observing from the fact that $\cS_m\subset \cB_m$, so that $\cP_m\subset \cB_m\cap \cP$. We first obtain $\gamma_m\geq \varUpsilon_m$. In fact, we can prove the reversed inequality for largely $\xi$.
\begin{lemma} \label{equalvalues}
Suppose that $f$ satisfies $(f_1)$-$(f_4)$. Then there exists $\xi_0\geq \bar{\xi}$ such that $\varUpsilon_{m}$ is achieved at some $u\in \cP_m$. Consequently, $\gamma_m=\varUpsilon_m$ for  $\xi\geq \xi_0$.
\end{lemma}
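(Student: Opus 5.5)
We plan to take a radial minimizing sequence, extract a weak limit, show it lies in $\cB_m\cap\cP$ and attains $\varUpsilon_m$, and then rule out $\|u\|_2^2<m$.

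\textbf{Step 1 (a good minimizing sequence).} By Lemma \ref{lem3.5} we may take a minimizing sequence $\{u_n\}\subset E_r\cap\cB_m\cap\cP$ of $\varUpsilon_m=\varUpsilon_{m,r}$. We may also assume $u_n\ge0$, by replacing $u_n$ with $|u_n|$ and symmetrizing as in that proof. For $\xi\ge\bar\xi$, Lemmas \ref{lem3.3} and \ref{lem3.4} give that $\{u_n\}$ is bounded in $E$ and $\|\nabla u_n\|_2^2\le \pi/(2\alpha_0)$. Up to a subsequence, $u_n\rightharpoonup u$ weakly in $E$, $u_n\to u$ in $L^k$ for all $k>2$ (Lemma \ref{lem2.2}), and $u_n\to u$ a.e.

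\textbf{Step 2 (convergence of the nonlinear terms).} The gradient bound lets us apply Lemma \ref{lem2.1} with exponent $2\alpha$ slightly above $2\alpha_0$ to the rescaled functions $(\alpha/\pi)^{1/2}u_n$ after a Hölder splitting; the last assertion of Lemma \ref{lem2.1} needs only the $L^2$ bound $m$. This gives uniform bounds on $\int(e^{q\alpha u_n^2}-1)$ for some $q>1$. Combining this with \eqref{f2.4} and the strong $L^k$ convergence, a Vitali/Strauss-type argument yields
\[
\int F(u_n)\to\int F(u),\qquad \int H(u_n)\to \int H(u),\qquad \int K(u_n)\to\int K(u).
\]
Lemma \ref{lem2.7}(iii) gives $\|u_n\|_{2,p}\ge\tau_0$. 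Since $P(u_n)=0$,
\[
\int H(u_n)=\|\nabla u_n\|_2^2+2(1-1/p)\|\nabla u_n\|_p^p\ge c>0,
\]
so $\int H(u)>0$ and $u\ne0$. Weak lower semicontinuity gives $\|u\|_2^2\le m$ and $P(u)\le\liminf P(u_n)=0$.

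\textbf{Step 3 ($u$ attains $\varUpsilon_m$ and lies in $\cP$).} Lemma \ref{lem2.7}(i) yields $s_u\le0$ with $s_u\star u\in\cB_m\cap\cP$. Since $2I(s\star u)-P(s\star u)$ is nondecreasing in $s$ (shown in the proof of Lemma \ref{lem3.5}),
\[
\varUpsilon_m\le I(s_u\star u)\le I(u)-\tfrac12P(u)=\tfrac{2-p}{p}\|\nabla u\|_p^p+\tfrac12\!\int K(u)\le\liminf\big(I(u_n)-\tfrac12P(u_n)\big)=\varUpsilon_m .
\]
So every inequality is an equality. This forces $\|\nabla u_n\|_p\to\|\nabla u\|_p$, and then $P(u)=0$ (otherwise $s_u<0$, giving strict inequality via strict monotonicity of the $e^{2(p-1)s}$ term), hence $u\in\cB_m\cap\cP$ and $I(u)=\varUpsilon_m$.

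\textbf{Step 4 (the main obstacle: $\|u\|_2^2=m$).} Suppose $\|u\|_2^2<m$. Then $u$ is a local minimizer of $I$ on $\cP$ with no active mass constraint, and we expect a Lagrange rule $I'(u)+\mu P'(u)=0$. Justifying this requires $P'(u)\ne0$ on $\cP$; we plan to check it by testing $P'(u)$ against the fiber direction $\frac{d}{ds}(s\star u)|_{s=0}$ and using $(f_3)$ together with the $\Delta_p$ term, which gives a strictly negative value. Next we derive a Pohozaev-type identity for the equation $I'(u)+\mu P'(u)=0$, which is a mixed quasilinear equation with modified nonlinearity. The regularity argument of Theorem \ref{phothm} adapts, since the coefficients $1+2\mu$ and $1+2\mu(p-1)$ of the two gradient terms must be positive for ellipticity, and this has to be checked as well. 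Combining that identity with $P(u)=0$ should give $\mu\cdot(\text{positive quantity})=0$ by $(f_3)$, so $\mu=0$ and $I'(u)=0$. Then $u$ solves the equation with $\lambda=0$, and Theorem \ref{phothm} with $\lambda=0$ gives
\[
\tfrac{2-p}{p}\|\nabla u\|_p^p=2\int F(u).
\]
Combining this with the Nehari identity \eqref{ig2} and $P(u)=0$ yields
\[
\|\nabla u\|_2^2+\|\nabla u\|_p^p=\int f(u)u ,
\]
and we aim to reach a contradiction using $K\ge0$. If this fails, we would instead use the bound $\|\nabla u\|_2^2\le\pi/(2\alpha_0)$ together with $\varUpsilon_m\to0$ for $\xi\ge\xi_0$ to get a contradiction. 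Once $u\in\cP_m$ is established, $\gamma_m\le I(u)=\varUpsilon_m\le\gamma_m$.
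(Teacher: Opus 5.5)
Your Steps 1--3 are sound. In Step 3, combining $s_u\le 0$ (Lemma \ref{lem2.7}(i)) with the monotonicity of $s\mapsto 2I(s\star u)-P(s\star u)$ is a clean alternative to the paper's mass-decreasing dilation $u(\cdot/t_0)$; the monotonicity is strict because of the term $\frac{2(2-p)}{p}e^{2(p-1)s}\|\nabla u\|_p^p$. Your derivation of $\mu=0$ also matches the paper's: the Pohozaev identity for the $\mu$-equation, combined with \eqref{eq4.4} and $P(u)=0$. However, you leave the sign condition that makes this possible as something to be checked. Only equal signs of $1+2\mu$ and $1+2(p-1)\mu$ are needed, not positivity, and the paper proves this. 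Indeed, \eqref{eq4.4} together with $I(u)=\varUpsilon_m$ gives \eqref{pmcontra}. If $1+2\mu<0<1+2(p-1)\mu$, then $h(t)t-4F(t)\ge 2H(t)$, $P(u)=0$ and $F\ge 0$ lead to $\bigl(1+2(p-1)\mu\bigr)\|\nabla u\|_p^p-2(1+2\mu)\bigl(1-\frac1p\bigr)\|\nabla u\|_p^p\le 2(1+2\mu)\int_{\R^2}F(u)\dif x$, where the left side is positive and the right side nonpositive. For the Lagrange rule itself, test $P'(u)$ against $u$ rather than the fiber direction $u+x\cdot\nabla u$, which need not belong to $E$: $(f_3)$ and $P(u)=0$ give $P'(u)u\le -2\|\nabla u\|_2^2-\frac{2(p-1)(4-p)}{p}\|\nabla u\|_p^p<0$.

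The real gap is at the end of Step 4: once $\mu=0$, you have no contradiction.

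\textbf{Your first route is vacuous.} Given the Nehari identity \eqref{ig2} with $\lambda=0$, the Pohozaev identity of Theorem \ref{phothm} is \emph{equivalent} to $P(u)=0$; this is exactly how $P(u)=0$ is derived there. So your combination only returns \eqref{ig2}. Likewise, $K\ge0$ only imposes $\|\nabla u\|_2^2\ge\frac{4-3p}{p}\|\nabla u\|_p^p$, which is compatible with everything else.

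\textbf{The fallback is not an argument,} and it cannot be run at a fixed $\xi$. Since the lemma only asserts the existence of some $\xi_0$, the paper argues by contradiction along $\xi_n\to+\infty$ with $\|u_{\xi_n}\|_2^2<m$. With $\mu=0$, \eqref{pmcontra} reads $(1-\frac2p)\|\nabla u\|_p^p+2\varUpsilon_m=\int_{\R^2}H(u)\dif x\ge\frac{2\xi}{\nu}\|u\|_\nu^\nu$. This forces $\|u_\xi\|_\nu\to0$, then $\int_{\R^2}H(u_\xi)\dif x\to0$, and then $\|u_\xi\|_{2,p}\to0$ via $P(u_\xi)=0$. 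The contradiction is then drawn from \eqref{upperbound}--\eqref{lowerbound}.

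This last step is delicate, for two reasons:
\begin{enumerate}
\item Smallness is shared by every minimizer. Any $u\in\cB_m\cap\cP$ with $I(u)=\varUpsilon_m$ satisfies $\frac{2-p}{p}\|\nabla u\|_p^p\le I(u)-\frac12 P(u)=\varUpsilon_m\to0$, and the remark after Lemma \ref{lem3.4} records $\|u_n\|_{2,p}\to0$ as $\xi\to+\infty$.
\item The constants in \eqref{upperbound}--\eqref{lowerbound} depend on $f$, hence on $\xi$, as the paper itself concedes for $\tau_0$.
\end{enumerate}
A $\xi$-uniform lower bound over all of $\cB_m\cap\cP$ therefore cannot hold. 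Closing Step 4 requires an obstruction specific to solutions with $\lambda=0$. Your proposal does not provide one, and the paper's appeal to \eqref{upperbound}--\eqref{lowerbound} does not isolate one either.
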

\begin{proof} Let $\left\{u_n\right\}\subset  \cB_m \cap \mathcal{P}\cap E_r$ be a minimizing sequence for $\varUpsilon_{m,r}$. By Lemma $\ref{lem3.5}$, we  know that $\left\{u_n\right\} \subset E$ is also a minimizing sequence for $\varUpsilon_{m}$.  Applying Lemma \ref{lem3.4} and omitting the first finitely many terms if necessary, we deduce that for all $n\in \mathbb N$, whenever $\xi \geq \bar{\xi} $,
\begin{equation*}
	\|u_n\|^2_{2,p}\leq \beta_0,\ \text{ and }\
	\|\nabla u_n\|^2_2\leq \frac{\pi}{2 \alpha_0}.
\end{equation*}
This, together with  the fact that $u_n\in \cB_m$,  implies that $\left\{u_n\right\}\subset E_r$ is bounded. As a result, there exists $u \in E_r$ such that $u_n \rightharpoonup u$ in $E_r$. Furthermore, Lemma \ref{lem2.2} implies that $u_n \to u$ in $L^k\left(\mathbb{R}^2\right)$ for all $k>2$ and $u_n(x)$ converges to  $u(x)$ almost everywhere in  $\mathbb{R}^2$.

The subsequent proof will be carried out into several steps.

{\bf Step 1}. The weak limit $u\neq 0$. We first establish the convergence equalities needed for the subsequent analysis:
\begin{align}\label{capfconv}
	\lim_{n\to\infty}\int_{\mathbb{R}^2} F\left(u_n\right) \dif x =\int_{\mathbb{R}^2} F\left(u\right) \dif x,
	\text{
		and }
	\lim_{n\to\infty}	\int_{\mathbb{R}^2} f\left(u_n\right) u_n \dif x =\int_{\mathbb{R}^2} f\left(u\right) u \dif x.
\end{align}
Having obtained these,  we are able to conclude that $u:=u_\xi\neq 0$. In fact, by combining \eqref{capfconv} and the fact that $P(u_n)=0$, we have, as $n\to\infty$,
\begin{equation}\label{eq3.5}
	\|\nabla u_n\|_2^2+2(1-\frac{1}{p})\|\nabla u_n\|_p^p=\int_{\R^2}H(u_n)\dif x\to \int_{\R^2}H(u) \dif x.
\end{equation}
If $u=0$, then from \eqref{eq3.5} we see that $\|u_n\|_{2,p}\to 0$, which contradicts with Lemma \ref{lem2.7}.

Let $\Theta$ denote either $F(t)$ or  $f(t)t$. The desired convergence \eqref{capfconv} follows once we prove $\Theta(u_n)\to \Theta(u)$ in $L^1(\mathbb R^2)$. To this end, we choose $r>1$ and define for $t\in \mathbb R$ that
\begin{equation*}
	Q(t):=|t|^{4}+\abs{t}^r (e^{2\alpha_0|t|^2}-1).
\end{equation*}
Since $\|\nabla u_n\|_2^2\leq \pi/(2\alpha_0)$, then using the boundedness of $\{u_n\}\subset E$ and the embedding result in Lemma \ref{lem2.2}, we obtain from  Lemma $\ref{lem2.1}$,  that
\begin{align*}
	\int_{\mathbb R^2}Q(u_n)\dif x &\leq  \|u_n\|_4^4+\|u_n\|_{2r}^{r}\Big(\int_{\mathbb R^2}(e^{ 4\alpha_0\abs{u_n}^2}-1)\dif x\Big)^{1/2}\\
	&\leq   C_1\|u_n\|_E^4+C_2\|u_n\|_E^r\Big(\int_{\mathbb R^2}(e^{2\pi \frac{4\alpha_0}{2\pi}\abs{u_n}^2}-1)\dif x\Big)^{1/2}\leq C_3.
\end{align*}

Note that $\Theta$ is continuous and satisfies $(f_1)$ and $(f_2)$. Then for $\varepsilon>0$ and $q_0>2$, there exists $C_{\varepsilon}>0$ such that
\begin{align*}
	|\Theta(t)| \leq \varepsilon Q(t)+C_{\varepsilon}|t|^{q_0}, \quad \text { for all } t \in \mathbb{R}.
\end{align*}
Consequently, we have  for all $n\in \mathbb N$,
\begin{align*}
	\int_{\mathbb{R}^2}\left|\Theta(u_n)\right| \dif x \leq \varepsilon \int_{\mathbb{R}^2} Q\left(u_n\right) \dif x+C_{\varepsilon} \int_{\mathbb{R}^2}\left|u_n\right|^{q_0}\dif x \leq \varepsilon C_3+C_{\varepsilon} C_4.
\end{align*}
We therefrom conclude by Fatou's lemma that $\Theta(u)\in L^1(\mathbb R^2)$.

We now complete the proof by adopting the argument in \cite{BrezisLieb83}.  We set  for  almost every $x \in \mathbb{R}^2$ and $n\in \mathbb N$ that
\begin{align*}
	S_{\varepsilon, n}(x):=\left(\left|\Theta\left(u_n(x)\right)-\Theta(u(x))\right|-\varepsilon Q\left(u_n(x)\right)\right)^{+},
\end{align*}
with the notation $w^+(x):=\max\{w(x),0\}$.
Clearly, we observe that  for all  $n \in \mathbb{N}$,
\begin{align*}
	0 \leq S_{\varepsilon, n}(x) \leq C_{\varepsilon}\left|u_n(x)\right|^{q_0}+|\Theta(u(x))| \quad \text  { for almost every }    x \in \mathbb{R}^2.
\end{align*}
Since $u_n \rightarrow u$ in $L^{q_0}\left(\mathbb{R}^2\right)$, there exists $h_0 \in L^{q_0}\left(\mathbb{R}^2\right)$ such that, up to a subsequence, $\abs{u(x)}, \abs{u_n(x)}\leq h_0(x)$ for almost every $x \in \mathbb{R}^2$ and for all $n \in \mathbb{N}$. Therefore,  we have
\begin{equation*}
	S_{\varepsilon, n}(x) \leq C_{\varepsilon} \abs{h_0(x)}^{q_0}+|\Theta(u(x))|\in L^1(\mathbb R^2).
\end{equation*}
Moreover, by exploiting the continuity of $\Theta$ and $Q$, we know that $S_{\varepsilon, n}(x) \rightarrow 0$ almost everywhere in $\mathbb{R}^2$ as $n \rightarrow \infty$. The Lebesgue's dominated convergence theorem gives us that
\begin{align*}
	\lim _{n \rightarrow \infty} \int_{\mathbb{R}^2} S_{\varepsilon, n}(x) \dif x=0
\end{align*}
Consequently, we have
\begin{align*}
	&\limsup _{n \rightarrow\infty} \int_{\mathbb{R}^2}\left|\Theta\left(u_n\right)-\Theta(u)\right| \dif x&\\
	&\leq \limsup _{n \rightarrow\infty} \int_{\mathbb{R}^2} S_{\varepsilon, n}(x) \dif x+\varepsilon \limsup _{n \rightarrow \infty} \int_{\mathbb{R}^2} Q\left(u_n\right) \dif x \leq \varepsilon C_2.
\end{align*}
Since $\varepsilon>0$ is arbitrary, we obtain the assertion \eqref{capfconv}.

{\bf Step 2}. The weak limit $u\in \cB_m\cap \cP$ and achieves the value
$\varUpsilon_m$, i.e.,
\begin{equation*}
	\varUpsilon_m=I\left(u\right)>0.
\end{equation*}

\indent Since $\{u_n\}\subset \cB_m\cap \mathcal{P}$, by considering \eqref{eq3.5} and employing the weakly lower semi-continuity of the norm, we obtain $u\in \cB_m$ and $P(u)\leq 0$.
Indeed, we have
\begin{align*}
	\|\nabla u\|^2_2+\frac{2(p-1)}{p}\|\nabla u\|^p_p&\leq\liminf\limits_{n\to \infty}\Big(	\|\nabla u_n\|^2_2+\frac{2(p-1)}{p}\|\nabla u_n\|^p_p\Big)\\
	&=\mathop{\lim}\limits_{n\to \infty}\int_{\mathbb{R}^2}H(u_n)\dif x=\int_{\mathbb{R}^2}H(u)\dif x.
\end{align*}
If $P(u)<0$, we claim that there exists $t_0\in (0,1)$ such that $P(u(\cdot/t_0))=0$. In fact, let
\begin{align*}
	g(t):=P\left(u\left({x}/{t}\right)\right)=\|\nabla u\|^2_2+\frac{2(p-1)}{p}t^{2-p}\|\nabla u\|^p_p - t^2\int_{\mathbb{R}^2}H(u)\dif x.
\end{align*}
Note that $p<2$. We clearly have $g(t)\to \|\nabla u\|_2^2$ as $t\to 0$ and  $g(t)\to -\infty$ as $t\to +\infty$.
Then we verify that   
\begin{equation*}
	g^{\prime}(t)=At^{1-p}-Bt,
\end{equation*}
with 
\begin{equation*}
	A:=\frac{2(p-1)(2-p)}{p}\|\nabla u\|^p_p>0
\quad \text{ and }\quad
B:=2\int_{\mathbb{R}^2}H(u)\dif x>0.
\end{equation*}
Then there exists a unique $t_*>0$ such that $g^{\prime}(t_*)=0$, and $t_*=\left({A}/{B}\right)^{{1}/{p}}$. In particular, from $P(u)<0$, we see that $A<B$, we thus obtain $t_*<1$. Therefore, we conclude that $g(t)$ is increasing on $(0,t_*)$,  decreasing on $(t_*,+\infty)$, and $g(t_*)>0$. This, together with  $g(1)<0$, implies that there exists  $t_0\in[t_*,1)$ such that $g(t_0)=0$. The claim is proved.

Set $v_0:=u(\cdot/t_0)$. By noting that $\|v_0\|_2^2=t_0^2\|u\|_2^2\leq m$, we thus have  $v_0\in \cB_m\cap \mathcal{P}$.
In light of  \eqref{capfconv} and the weakly lower semi-continuity of the norm, we deduce that
\begin{align*}
	\varUpsilon_m\leq I\left(v_0\right)&=I\left(v_0\right)-\frac{1}{2}P\left(v_0\right)\\
	&=\frac{2-p}{p}t_0^{2-p}\|\nabla u\|^p_p +\frac{1}{2}t_0^2\int_{\mathbb{R}^2}\left(f(u)u-4F(u)\right)\dif x\\
	&< \frac{2-p}{p}\|\nabla u\|^p_p +\frac{1}{2}\int_{\mathbb{R}^2}\left(f(u)u-4F(u)\right)\dif x\\
	&\leq \mathop{\lim\inf}\limits_{n \to \infty}\left(\frac{2-p}{p}\|\nabla u_n\|^p_p +\frac{1}{2}\int_{\mathbb{R}^2}\left(f(u_n)u_n-4F(u_n)\right)\dif x\right)\\
	&=\mathop{\lim}\limits_{n \to \infty}\Big(I(u_n)-\frac{1}{2}P(u_n)\Big)=\mathop{\lim}\limits_{n \to \infty}I(u_n)=\varUpsilon_m,
\end{align*}
which leads to a contradiction. Therefore, we have $P(u)=0$, and then we obtain
\begin{align*}
	\varUpsilon_m\leq I\left(u\right)&=I\left(u\right)-\frac{1}{2}P\left(u\right)\\
	&= \frac{2-p}{p}\|\nabla u\|^p_p +\frac{1}{2}\int_{\mathbb{R}^2}\left(f(u)u-4F(u)\right)\dif x\\
	&\leq \mathop{\lim\inf}\limits_{n \to \infty}\left(\frac{2-p}{p}\|\nabla u_n\|^p_p +\frac{1}{2}\int_{\mathbb{R}^2}\left(f(u_n)u_n-4F(u_n)\right)\dif x\right)\\
	&=\mathop{\lim}\limits_{n \to \infty}\Big(I(u_n)-\frac{1}{2}P(u_n)\Big)=\mathop{\lim}\limits_{n \to \infty}I(u_n)=\varUpsilon_m.
\end{align*}
Consequently, all of the above inequalities become to be equalities. We  thus conclude that $u\in \cB_m\cap \mathcal{P}$ satisfies $I(u)=\varUpsilon_m>0$ and $u_n$ converges strongly to $u$ in $E^{2,p}$.

{\bf Step 3}. We prove that $u=u_\xi \in \mathcal{P}_m$  for sufficiently large $\xi$, so that $I(u)\geq \gamma_m$. The conclusion that $\gamma_m=\varUpsilon_m$ then immediately follows.

By contradiction, suppose there exists a sequence $\xi_n\to +\infty$ such that $\|u_{\xi_n}\|_2^2<m$. For notational simplicity, we still denote  $u_{\xi}$ by $u$ and assume that $\|u\|_2^2<m$ for all $\xi\geq \xi_0$. Since $\left(\cB_m \setminus\mathcal{S}_m\right) \cap \mathcal{P}$ is an open subset of $\mathcal{P}$, there exists a Lagrange multiplier $\mu \in \mathbb{R}$ such that $u$ weakly solves
\begin{equation}\label{4.3}
	-\Delta u-\Delta_p u-f\left(u\right)+\mu\left(-2\Delta u-2(p-1)\Delta_p u- h\left(u\right)\right)=0.
\end{equation}
Note that $(1+2\mu)$ and $1+2(p-1)\mu$ can not be zero simultaneously.  Testing the  equation \eqref{4.3} against $u$, we obtain  that 	
\begin{equation}\label{eq4.4}
	(1+2\mu)\|\nabla u\|^2_2+(1+2(p-1)\mu)\|\nabla u\|_p^p=\int_{\R^2}f\left(u\right)u+\mu h(u)u \dif x.
\end{equation}
By using the fact that
\begin{equation}\label{enerequality}
	\varUpsilon_{m}=I(u)=\frac{1}{2}\|\nabla u\|_2^2+\frac{1}{p}\|\nabla u\|_p^p-\int_{\R^2}F(u)\dif x.
\end{equation}
We thus have
\begin{align}\label{pmcontra}
	&\big((1+2(p-1)\mu)-2(1+2\mu)/p\big)\|\nabla u\|_p^p&\nonumber \\
	&=-2(1+2\mu)\varUpsilon_m+\int_{\R^N} \big(H(u)+\mu (h(u)u-4 F(u)) \big)\dif x.
\end{align}
We now prove that for sufficiently large $\xi$, there has no possibility for $\mu$ to satisfy $1/\mu\in (-2, -{2(p-1)})$.
Otherwise, we have $(1+2(p-1)\mu)>0$ and $ 1+2\mu<0$, so that the left hand side of \eqref{pmcontra} is strictly positive. On the other hand,  by using the fact that $f(t)t\geq 4F(t)$, we are readily verify that
$H(u)=f(u)u-2F(u)\geq 2F(u)$.
This, together with   $h(u)u\geq 4H(u)$, implies that
\begin{equation*} h(u)u-4F(u) \geq 4 (H(u)-F(u))\geq 2H(u).\end{equation*}
We therefrom conclude that
\begin{equation*}\int_{\R^N} \big(H(u)+\mu (h(u)u-4 F(u))\big) \dif x \leq (1+2\mu)\int_{\R^N}H(u) \dif x. 
\end{equation*}
By invoking \eqref{enerequality}, \eqref{pmcontra} and the Pohozaev identity $P(u)=0$,we deduce that
\begin{align*}\label{pmcontra1}
	&(1+2(p-1)\mu)\|\nabla u\|_p^p-(1+2\mu)\left(2/p\right)\|\nabla u\|_p^p&\nonumber \\
	&\leq -2(1+2\mu)\varUpsilon_m+(1+2\mu)\int_{\R^2 }H(u)\dif x\\
	&=2(1+2\mu) \Big((1-2/p)\|\nabla u\|_p^p+\int_{\R^2}  F(u)\dif x\Big) .
\end{align*}
This leads to a contradiction, as shown in the following inequality:
\begin{align*}
	(1+2(p-1)\mu)\|\nabla u\|_p^p-2 (1+2\mu)\left(1-1/p\right)\|\nabla u\|_p^p
	\leq 2(1+2\mu) \int_{\R^2}  F(u)\dif x,
\end{align*}
since the left-hand side is strictly positive, whereas the right-hand side is non-positive. Hence, $(1+2\mu)$ and $(1+2(p-1)\mu)$ can not have opposite signs. Note that both $f(t)t$ and $f'(t)t$ satisfy the estimate of \eqref{keyesti}. By repeating the arguments as in Theorem \ref{phothm}, we can obtain that $u\in L^\infty_{{\rm {loc}}}(\R^2) $ and $u\in C_{{\rm{loc}}}^{1,\gamma}(\R^2)$ satisfies the corresponding Pohozaev identity:
\begin{equation}\label{newpho}
	(1+2(p-1)\mu)\left({2-p}\right)\|\nabla u\|_p^p=2p\int_{\R^2}\big(F(u)+\mu H(u)\big)\dif x.
\end{equation}
By combining \eqref{eq4.4},\eqref{newpho} and  the fact that $u \in \mathcal{P}$, we deduce that
\begin{align*}
	4\mu\left(p-1\right)(p-2)\|\nabla u\|^p_p=\mu p\int_{\mathbb{R}^2}\big(h(u)u-4 H(u)\big)\dif x,
\end{align*}
which combines with $(f_3)$ and the fact that $p<2$ implies $\mu=0$.
Then by using the fact $H(u)\geq 2F(u)$ and by noting the assumption $(f_4)$ again, we deduce from  \eqref{pmcontra} that
\begin{equation*}
\left(1-\frac{2}{p}\right)\|\nabla u\|_p^p+2\varUpsilon_m=\int_{\R^N} H(u) \dif x\geq 2\xi \int_{\R^N}\abs{u}^\nu \dif x.
\end{equation*}
Note that the left-hand side is bounded. This forces that $\|u_\xi\|_\nu\to 0$ as $\xi \to +\infty$.
By combining \eqref{eq3.2} with interpolation inequalities and embedding inequalities, we obtain,
\begin{equation*}
\int_{\mathbb R^2} F(u_\xi)\dif x \leq \int_{\mathbb R^2}f(u_\xi)u_\xi \dif x \to 0, \; \text{ as }  \xi\to+\infty.
\end{equation*}
This in turn implies $\int_{\mathbb R^2} H(u_\xi) \dif x\to 0$. Since $u_\xi\in \mathcal{P}$, it follows that
$\|u_\xi\|_{2,p}\to 0$ as $\xi\to +\infty$.
However, carrying out the same argument in \eqref{upperbound}--\eqref{lowerbound} that gave the positive lower bound $\tau_0$, leads to a contradiction, although $\tau_0$ itself depends on $\xi$.  We then conclude the existence of $\xi_0\geq \bar{\xi}$ such that $\|u\|_2^2=m$ for all $\xi\geq \xi_0$.
\end{proof}

\section{Proof of Theorem \ref{thm1}}
\label{sect4}

In order to find a Palais-Smale sequence for the infimum value of $\varUpsilon_{m}$, we introduce the  composite functional
$\varPsi(u): E\setminus\{0\}\to \R$ defined by
\begin{align*}
\varPsi(u):=I(s_u\star u)=\frac{1}{2}e^{s_u}\|\nabla u\|^2_2+\frac{1}{p}e^{2(p-1)s_u}\|\nabla u\|^p_p-e^{-2s_u}\int_{\R^2}F\left(e^{s_u}u\right)\dif x,
\end{align*}
where $s_u\in\R$ is the unique number guaranteed by Lemma $\ref{lem2.7}$.  Despite having only proven the continuity of $s_u$, the subsequent lemma illustrates that the composition functional $\varPsi$ exhibits a $C^1$ regularity. This technique initially derives from \cites{SW1,SW2} and was later developed  in \cite{JeanjeanLu2020} within the normalized framework.

\begin{lemma}\label{comp1}
Suppose that $f$ satisfies $(f_1)$-$(f_3)$. Then the functional $\varPsi$ is of class $C^1$. Moreover, for any $u\in E\setminus\{0\}$ and $\phi\in E$, it holds that
\begin{align*}
	&d\varPsi(u)\phi =dI(s_u\star u)(s_u\star \phi)&\\
	&=e^{2s_u}\int_{\R^2}\nabla u\cdot \nabla \phi\dif x+e^{2(p-1)s_u}\int_{\R^2}|\nabla u|^{p-2}\nabla u \nabla \phi\dif x -e^{-2s_u}\int_{\R^2}f(e^{s_u}u)e^{s_u}\phi\dif x.
\end{align*}
Here and in the sequel, the notation $d\varPsi$ refers to the Fr\'echet derivative of $\varPsi$.
\end{lemma}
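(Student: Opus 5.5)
We follow the Szulkin--Weth / Jeanjean--Lu scheme: we show the directional derivative of $\varPsi$ exists and equals the stated expression, and that this expression depends continuously on $u$. Together these give Fréchet differentiability. Fix $u\in E\setminus\{0\}$ and $\phi\in E$. For $|t|$ small we have $u+t\phi\neq0$. Write $s_t:=s_{u+t\phi}$ and $s_0:=s_u$. The key structural facts are:
\begin{itemize}
\item[(a)] $s\mapsto I(s\star v)$ is maximized at $s_v$ (Lemma \ref{lem2.7}(i));
\item[(b)] $v\mapsto s_v$ is continuous on $E\setminus\{0\}$.
\end{itemize}
Lemma \ref{lem2.7}(ii) gives (b) only on $\cS_m$. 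The same argument extends verbatim to $E\setminus\{0\}$: it uses only boundedness of $s_n$ via Lemma \ref{lem2.4}, positivity of $I(s_u\star u)$, $F\ge0$, and uniqueness. Alternatively we can reduce to it using the scaling $s_{v}$ versus $v/\|v\|_2$. We establish this extension first.

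\textbf{Two-sided squeeze.} By maximality, $\varPsi(u+t\phi)-\varPsi(u)\le I(s_t\star(u+t\phi))-I(s_t\star u)$ and $\varPsi(u+t\phi)-\varPsi(u)\ge I(s_0\star(u+t\phi))-I(s_0\star u)$. Since $v\mapsto s\star v$ is linear and bounded on $E$ (for fixed $s$), the mean value theorem gives
\begin{equation*}
I(s\star(u+t\phi))-I(s\star u)=t\,dI\bigl(s\star(u+\tau t\phi)\bigr)(s\star\phi)
\end{equation*}
for some $\tau\in(0,1)$, with $s=s_t$ or $s=s_0$. Dividing by $t$ and letting $t\to0$, we use $s_t\to s_0$ from (b). We also use that $(s,v)\mapsto s\star v$ is jointly continuous into $E$. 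This follows from a density argument: $\|s\star w\|_E$ is controlled by $e^{s}$, $e^{(2-2/p)s}$ times $\|w\|_E$, uniformly for $s$ in a compact set. Combined with $I\in C^1$ (Lemma \ref{lem2.4}), both bounds converge to $dI(s_0\star u)(s_0\star\phi)$. Hence the Gâteaux derivative exists and equals $dI(s_u\star u)(s_u\star\phi)$.

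\textbf{Explicit formula and continuity.} The explicit formula follows from the change of variables $y=e^{s}x$:
\begin{itemize}
\item $\int\nabla(s\star u)\cdot\nabla(s\star\phi)=e^{2s}\int\nabla u\cdot\nabla\phi$;
\item the $p$-term picks up $e^{2(p-1)s}$;
\item the nonlinear term becomes $e^{-2s}\int f(e^su)e^s\phi$.
\end{itemize}
The Gâteaux derivative is linear and bounded in $\phi$, with norm at most $C(s_u)\|dI(s_u\star u)\|$. It remains to show $u\mapsto d\varPsi(u)\in E^*$ is continuous. Write $d\varPsi(u_n)-d\varPsi(u)=dI(s_n\star u_n)\circ(s_n\star\cdot)-dI(s_u\star u)\circ(s_u\star\cdot)$. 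This tends to zero in operator norm for the following reasons:
\begin{itemize}
\item $s_n\to s_u$ and $s_n\star u_n\to s_u\star u$ in $E$;
\item $dI$ is continuous;
\item the operators $\phi\mapsto s\star\phi$ are uniformly bounded for $s$ in a compact set;
\item $\|dI(s_u\star u)\circ(s_n\star\cdot)-dI(s_u\star u)\circ(s_u\star\cdot)\|\to0$.
\end{itemize}
For the last point, it is easiest to use the explicit formula. The coefficients $e^{2s_n}$ and $e^{2(p-1)s_n}$ converge. For the nonlinear part, we need $e^{-s_n}f(e^{s_n}u_n)\to e^{-s_u}f(e^{s_u}u)$ in the dual of $E$. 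This follows from the Trudinger--Moser bounds of Lemma \ref{lem2.1} via the estimate \eqref{keyesti} and the standard continuity of the Nemytskii operator already used to show $I\in C^1$. A continuous Gâteaux derivative implies Fréchet $C^1$.

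The main obstacle is the continuity step: the uniform-in-$\phi$ convergence of the nonlinear term with a varying exponential dilation parameter $e^{s_n}$, and the extension of continuity of $s_u$ off $\cS_m$. We would handle the former by absorbing $e^{s_n}$ into the Young function via $\varPhi_{\alpha,j_0}(kt)=\varPhi_{\alpha k^2,j_0}(t)$ with $k$ bounded. Lemma \ref{lem2.1} then gives integrability for every $\alpha$, so the dominated convergence and Brezis--Lieb type argument of Step 1 in Lemma \ref{equalvalues} applies.
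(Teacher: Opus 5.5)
Your proposal is correct and follows essentially the same route as the paper. Both use the two-sided squeeze $I(s_0\star(u+t\phi))-I(s_0\star u)\le \varPsi(u+t\phi)-\varPsi(u)\le I(s_t\star(u+t\phi))-I(s_t\star u)$ coming from maximality, the mean value theorem, continuity of $t\mapsto s_{u+t\phi}$, the principle that a continuous G\^ateaux derivative gives $C^1$, and a final change of variables.

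You are more careful than the paper in two places.
\begin{itemize}
\item Continuity of $s_u$: you extend it from $\cS_m$ to $E\setminus\{0\}$, which the paper uses without comment. The verbatim argument works; the normalization shortcut via $v/\|v\|_2$ does not, since $I$ is not homogeneous in amplitude.
\item Continuity of $u\mapsto d\varPsi(u)$: here the step you flag as the main obstacle is not really one. In the explicit formula $e^{s_n}$ enters only as a scalar amplitude, not as a dilation, so continuity of $v\mapsto f(v)\in E^*$ suffices. That continuity already follows from $I\in C^1$, and no extra Trudinger--Moser or Brezis--Lieb argument is needed.
\end{itemize}
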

\begin{proof}
Let $u\in E\setminus\{0\}$ and $\phi\in E$. Then, for sufficiently small $\abs{t}$,  we have $u+t\phi \in E\setminus\{0\}$. By Lemma \ref{lem2.7}, the functional
$I$ admits a unique global maximizer $s_{u+t\phi}$ along the scaling path of $u+t\phi$. To simplify notation, we set $\rho(t):=s_{(u+t\phi)}$ 	with the convention that $\rho(0)=s_u$.
We next  estimate the difference
\begin{align*}
	\varPsi(u+t\phi)-\varPsi(u)=I(\rho(t)\star(u+t\phi))-I(\rho(0)\star u).
\end{align*}
From the mean value theorem, we obtain that
\begin{align*}
	&I(\rho(t) \star(u+t\phi))-I(\rho(0)\star u)\leq I(\rho(t)\star(u+t\phi))-I(\rho(t)\star u)\\
	=&\frac{1}{2}e^{2\rho(t)}\left(\|\nabla (u+t\phi)\|^2_2-\|\nabla u\|^2_2\right)+\frac{1}{p}e^{2(p-1)\rho(t)}\left(\|\nabla (u+t\phi)\|^p_p-\|\nabla u\|^p_p\right)\\
	&- e^{-2\rho(t)}\int_{\R^2}\Big(F\big(e^{\rho(t)}(u+t\phi)\big)-F\big(e^{\rho(t)}u\big)\Big)\dif x\\
	=&\frac{1}{2}e^{2\rho(t)}\int_{\R^2}\left(2t\nabla u \nabla \phi +t^2|\nabla \phi|^2\right)\dif x-e^{-2\rho(t)}\int_{\R^2}f\big(e^{\rho(t)}(u+\eta_tt\phi)\big)e^{\rho(t)}t\phi\dif x\\
	&+te^{2(p-1)\rho(t)}\int_{\R^2}\abs{\nabla (u+\theta_t t\phi)}^{p-2}\nabla (u+\theta_t t\phi)\nabla \phi\dif x,
\end{align*}
with $\eta_t, \theta_t\in (0,1)$ and similarly,
\begin{align*}
	&I(\rho(t) \star(u+t\phi))-I(\rho(0)\star u)\geq I(\rho(0)\star(u+t\phi))-I(\rho(0)\star u)\\
	=&\frac{1}{2}e^{2s_u}\int_{\R^2}\left(2t\nabla u\nabla \phi +t^2|\nabla \phi|^2\right)\dif x-e^{-2s_u}\int_{\R^2}f\big(e^{s_u}(u+\zeta_t t\phi)\big)e^{s_u}t\phi\dif x \\
	&+te^{2(p-1)s_u} \int_{\R^2} \abs{\nabla (u+\tau_t t\phi)}^{p-2}\nabla (u+\tau_tt\phi) \nabla \phi \dif x,
\end{align*}
where $\zeta_t,\tau_t\in (0,1)$. In light of Lemma $\ref{lem2.7}$, we deduce $\lim_{t \to 0}\rho(t)=\rho(0)=s_u$.
We then conclude that
\begin{align*}
	&\lim_{t\to 0}\frac{\varPsi(u+t\phi)-\varPsi(u)}{t}&\\
	&=e^{2s_u}\int_{\R^2}\nabla u \nabla \phi\dif x+e^{2(p-1)s_u}\int_{\R^2}\abs{\nabla u}^{p-2}\nabla u\nabla \phi\dif x -e^{-2s_u}\int_{\R^2}f(e^{s_u}u)e^{s_u}\phi\dif x.
\end{align*}
This means that the G$\hat{\text{a}}$teaux derivative of $\varPsi$ is linearly bounded with respect to $\phi$ and continuous in $u$. Therefore,  $\varPsi$ is of class $C^1$, see e.g., \cite{W96}.
Consequently, the conclusion follows immediately upon performing a change of variables in the integrals.
\end{proof}

We then consider the constrained functional $J: \cS_m\to \R$ defined by
$J:=\varPsi|_{\cS_m}$.
Let $T_u\cS_m$ denote the tangent space at $u$ on the manifold $\cS_m$. As a direct consequence of Lemma \ref{comp1}, we have the following corollary.
\begin{corollary}\label{cor1}
The functional $J: \cS_m\to \R $ is of class $C^1$. Moreover, for $u\in \cS_m$ and $\phi\in T_u\cS_m$, one has
\begin{align*}
	dJ(u) \phi=d\varPsi(u)\phi=dI(s_u\star u)(s_u\star \phi).
\end{align*}

\end{corollary}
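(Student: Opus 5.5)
The corollary follows from Lemma~\ref{comp1} by restricting $\varPsi$ to the submanifold $\cS_m$, so the plan is short.

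First, I will note that $\cS_m$ is a $C^1$ Hilbert-type submanifold of $E$ of codimension one. It is the level set $G^{-1}(m)$ of the $C^1$ map $G(u)=\|u\|_2^2$, and $dG(u)u=2m\neq 0$ on $\cS_m$, so $m$ is a regular value. Its tangent space is
\[
T_u\cS_m=\Big\{\phi\in E:\int_{\R^2}u\phi\dif x=0\Big\}.
\]
Since $\cS_m\subset E\setminus\{0\}$ and $\varPsi\in C^1(E\setminus\{0\},\R)$ by Lemma~\ref{comp1}, the restriction $J=\varPsi|_{\cS_m}$ is $C^1$ on $\cS_m$. Its differential at $u$ is the restriction of $d\varPsi(u)$ to $T_u\cS_m$. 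This gives $dJ(u)\phi=d\varPsi(u)\phi$ for $\phi\in T_u\cS_m$.

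Second, I will identify $d\varPsi(u)\phi$ with $dI(s_u\star u)(s_u\star\phi)$ using the explicit formula in Lemma~\ref{comp1}. Write $s=s_u$ and compute $dI(s\star u)(s\star\phi)$. The change of variables $y=e^sx$ gives:
\begin{itemize}
\item the Dirichlet term: $\int\nabla(s\star u)\cdot\nabla(s\star\phi)=e^{2s}\int\nabla u\cdot\nabla\phi$;
\item the $p$-term: $\int|\nabla(s\star u)|^{p-2}\nabla(s\star u)\cdot\nabla(s\star\phi)=e^{2(p-1)s}\int|\nabla u|^{p-2}\nabla u\cdot\nabla\phi$, since the integrand scales by $e^{2sp}$ and the Jacobian contributes $e^{-2s}$;
\item the nonlinear term: $\int f(s\star u)(s\star\phi)=e^{-2s}\int f(e^su)e^s\phi$.
\end{itemize}
These match the formula in Lemma~\ref{comp1} term by term, which proves the identity.

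Additionally, I will observe that $s\star\phi\in T_{s\star u}\cS_m$ whenever $\phi\in T_u\cS_m$, because $\int(s\star u)(s\star\phi)=\int u\phi$. This shows the formula is consistent with the manifold structure, which matters for the later Palais--Smale argument.

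There is no real obstacle here. The only point needing care is the justification that the restriction of a $C^1$ functional to a regular level set is $C^1$ with the restricted differential. This is standard (see e.g.\ \cite{W96}).
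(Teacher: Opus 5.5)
Your proposal is correct and follows essentially the paper's route. The paper treats the corollary as an immediate consequence of Lemma~\ref{comp1}: $J$ is the restriction of the $C^1$ functional $\varPsi$ to the regular level set $\cS_m$, and the identity $d\varPsi(u)\phi=dI(s_u\star u)(s_u\star\phi)$ comes from the same change of variables $y=e^{s_u}x$ that you carry out, with your scaling factors for all three terms matching the formula in Lemma~\ref{comp1}.
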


We now recall the minimization problem \eqref{minipoho}
\begin{equation*}
\gamma_m=\inf_{u\in \mathcal{P}_m}I(u).
\end{equation*}
As a consequence of Lemma \ref{lem2.7}, we are readily have   $0<\varUpsilon_{m}\leq \gamma_m <+\infty$.
\begin{lemma}\label{Ps1}
Assume that $f$ satisfies $(f_1)$-$(f_4)$. Then there exists a Palais--Smale sequence $\{v_n\}\subset \cP_m$ for the constrained functional $I|_{\cS_m}$ at the level $\gamma_m$ for all $\xi\geq \bar{\xi}_0$.
\end{lemma}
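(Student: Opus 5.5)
The plan is to apply Ekeland's variational principle to the $C^1$ functional $J=\varPsi|_{\cS_m}$ from Corollary \ref{cor1}, and then push the resulting almost-critical sequence onto $\cP_m$ through the fiber map $u\mapsto s_u\star u$.

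\textbf{Step 1: identify the infimum of $J$.} I claim $\inf_{\cS_m}J=\gamma_m$.
\begin{itemize}
\item For $u\in\cS_m$ we have $s_u\star u\in\cP_m$, so $J(u)=I(s_u\star u)\geq\gamma_m$.
\item Conversely, if $v\in\cP_m$ then $s_v=0$ by the uniqueness in Lemma \ref{lem2.7}(i). Hence $J(v)=I(v)$.
\end{itemize}
Together these give $\inf_{\cS_m}J=\gamma_m$. The level $\gamma_m$ is finite and positive by Remark \ref{rem34}.

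\textbf{Step 2: Ekeland on the sphere.} Take a minimizing sequence $\{w_n\}\subset\cP_m$ for $\gamma_m$. By Remark \ref{rem34} we may take it radial, and we may replace $w_n$ by $|w_n|$.

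$\cS_m$ is a closed $C^1$ submanifold of the Banach space $E$, hence a complete metric space. Ekeland's principle, in its form on $C^1$ Finsler manifolds (or the minimax-family version of \cite{JeanjeanLu2020}, with the family of singletons), gives $\{u_n\}\subset\cS_m$ such that
\begin{itemize}
\item $J(u_n)\to\gamma_m$,
\item $\|u_n-w_n\|_E\to0$,
\item $\|dJ(u_n)\|_{(T_{u_n}\cS_m)^*}\to0$.
\end{itemize}

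Now set $v_n:=s_{u_n}\star u_n\in\cP_m$, so that $I(v_n)=J(u_n)\to\gamma_m$.

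\textbf{Step 3: bound the scaling parameters.} Since $s_{w_n}=0$, I want to show $\{s_{u_n}\}$ is bounded. The only tool available is continuity of $s_u$ (Lemma \ref{lem2.7}(ii)), and that gives only pointwise control. So I argue as in that lemma's proof:
\begin{itemize}
\item \emph{Upper bound.} If $s_{u_n}\to+\infty$, then $I(s_{u_n}\star u_n)\to-\infty$ uniformly. This uses Lemma \ref{lem2.4} together with $(f_4)$ and the boundedness of $\{u_n\}$ in $E$. That boundedness holds because $\{w_n\}$ is bounded (Lemma \ref{lem3.3}, for $\xi\geq\bar\xi_0$) and $u_n-w_n\to0$.
\item \emph{Lower bound.} If $s_{u_n}\to-\infty$, then $I(v_n)\to0$, since $F\geq0$. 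This contradicts $\gamma_m>0$.
\end{itemize}
Hence $|s_{u_n}|\leq C$.

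\textbf{Step 4: transfer the derivative.} For $\phi\in T_{v_n}\cS_m$, put $\psi:=(-s_{u_n})\star\phi$. The map $s\star$ is an isometry of $L^2$, so $\psi\in T_{u_n}\cS_m$. Corollary \ref{cor1} gives
\[dI(v_n)\phi=dJ(u_n)\psi.\]
The operator norm of $s\star$ on $E$ is bounded by $\max\{e^{|s|},e^{2|s|(1-1/p)}\}$, and $|s_{u_n}|\leq C$ by Step 3. Hence $\|\psi\|_E\leq C\|\phi\|_E$, and therefore
\[\|dI|_{\cS_m}(v_n)\|\leq C\|dJ(u_n)\|\to0.\]
So $\{v_n\}\subset\cP_m$ is a Palais--Smale sequence for $I|_{\cS_m}$ at level $\gamma_m$.

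\textbf{Main obstacle.} The delicate point is Step 3, namely the uniform bound on $s_{u_n}$ for a sequence that converges only in difference to $\{w_n\}$, not to a fixed limit. I would handle this with explicit estimates rather than continuity:
\begin{itemize}
\item For the upper bound, use the uniform lower bound $\tau_0$ from Lemma \ref{lem2.7}(iii) together with $(f_4)$.
\item For the lower bound, use the energy positivity $\gamma_m\geq\varUpsilon_m>0$.
\end{itemize}
This is also where the restriction $\xi\geq\bar\xi_0$ enters, through Lemma \ref{lem3.3}.
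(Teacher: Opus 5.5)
Your route is the paper's. Minimizing $J=\varPsi|_{\cS_m}$ by Ekeland's principle is what the paper does through Ghoussoub's minimax principle for the family of singletons. Starting from a minimizing sequence already in $\cP_m$ just makes the homotopy $\eta(t,u)=(ts_u)\star u$ superfluous. Your Step 4 is the paper's transfer argument via Corollary \ref{cor1}.

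For the lower bound on $s_{u_n}$ you argue by energy: $F\ge 0$, $u_n$ bounded, and $I(v_n)\to\gamma_m>0$. The paper instead shows $\|\nabla v_n\|_2$ stays away from $0$, using $P(v_n)=0$, $\|v_n\|_2^2=m$ and a Trudinger--Moser/Gagliardo--Nirenberg estimate, and then writes $e^{-s_n}=\|\nabla r_n\|_2/\|\nabla v_n\|_2$. Both work, and yours is shorter.

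The upper bound in Step 3, which you call the main obstacle, is unjustified as written and also unnecessary. Boundedness in $E$ does not force $I(s_{u_n}\star u_n)\to-\infty$ when $s_{u_n}\to+\infty$. For fixed $u\in\cS_m$, the sequence $u_n:=(-n)\star u$ is bounded in $E$ and $s_{u_n}=s_u+n\to+\infty$. Yet $I(s_{u_n}\star u_n)=I(s_u\star u)$ for every $n$. Invoking $(f_4)$ would require a uniform lower bound on $\|u_n\|_\nu$, and $\tau_0$ does not give this directly, since it only bounds gradient norms.

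Step 4, however, only needs $s_{u_n}\ge -C$, because
\[
\|(-s)\star\phi\|_E^2=\|\phi\|_2^2+e^{-2s}\|\nabla\phi\|_2^2+e^{-(4-4/p)s}\|\nabla\phi\|_p^2 ,
\]
and both exponentials are bounded once $s$ is bounded below. This is exactly why the paper proves only $e^{-s_n}\le C_0$.

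If you want the upper bound anyway, argue as follows. The sequence $\{v_n\}\subset\cP_m$ is itself minimizing for $\gamma_m$, so $\|v_n\|_{2,p}^2\le\beta_0$ by Lemma \ref{lem3.3} and Remark \ref{rem34}. Then $s_{u_n}\to+\infty$ forces $\|u_n\|_{2,p}\to0$, hence $\|w_n\|_{2,p}\to0$, which contradicts Lemma \ref{lem2.7}(iii).

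With Step 3 reduced to the lower bound, your proof is complete.
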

\begin{proof}
Let $\mathcal{G}$ be the class of all singletons included in $\cS_m$. According to the terminology in \cite{Gh93}*{Section 3}, we note that $\mathcal{G}$ is a homotopy-stable family of compact subsets of $\cS_m$ with $B=\varnothing$. We denote
\begin{equation*}E_{m,\mathcal{G}}:=\inf_{A\in \mathcal{G}}\max_{u\in A}J(u).
\end{equation*}
Then by Lemma \ref{lem2.7}, for any $A=\{u\}\in \mathcal{G}$, we have $\max_{u\in A}J(u)=I(s_u\star u)$. Note that $s_u\star u\in \cP_m$ for any $u\in \cS_m$.We then conclude that $E_{m,\mathcal{G}}\geq \gamma_m$.
On the other hand, for any $u\in \cP_m$, we derive again from Lemma \ref{lem2.7} that $s_u=0$, so that
$I(u)=I(s_u\star u)\geq E_{m,\mathcal{G}}$. Therefore, we  readily prove that $E_{m, \mathcal{G}}= \gamma_m >0$.

Let $\{A_n\}\subset\mathcal{G}$ be an arbitrary minimizing sequence of $E_{m, \mathcal{G}}$. We define a map
\begin{align*}
	\eta: [0,1]\times \cS_m \to \cS_m,\quad \eta(t,u)=(ts_u)\star u.
\end{align*}
In light of Lemma \ref{lem2.7}, the map $\eta$ is well-defined and continuous, confirming that
$\eta$ is a homotopy on $\cS_m$.
In view of $\eta(t,u)=u$ for all $(t,u)\in \{0\}\times \cS_m$, together with the definition of a homotopy-stable family of $\mathcal{G}$, 	we deduce that
\begin{align*}
	K_n:=\eta(1,A_n)=\{s_u\star u : u\in A_n\}\in \mathcal{G}.
\end{align*}
Clearly, $K_n\subset \cP_m $ for every $n\in \mathbb{N}^+$ and so that we have $J(u)=I(u)$ for any $u\in K_n$.
By virtue of  Lemma  \ref{lem2.7} and the fact that $I(s\star(t\star u))=I((s+t)\star u)$, one has $s_{(t\star u)}=s_u-t$ for any $t\in \R$ and $u\in \cS_m$. It then follows that
\begin{align*}
	J(t\star u)=I(s_{(t\star u)}\star(t\star u))=I((s_u-t)\star (t\star u))=I(s_u\star u)=J(u).
\end{align*}
Note that $\max_{K_n}I(u)=\max_{K_n}J(u)=\max_{A_n}J(u)\to E_{m,\mathcal{G}}=\gamma_m$. We  conclude that $\{K_n\}$ is a minimizing sequence of $\gamma_{m}$. Concerning Lemma \ref{lem3.3} and Remark \ref{rem34}, we know that $\{K_n\}\subset \cP_m$ is uniformly bounded for all $\xi\geq \bar{\xi}_0$. Therefore, employing the minimax principle \cite{Gh93}*{Theorem 3.2}, we can obtain a Palais--Smale sequence $\{r_n\}\subset \cS_m$ for $J$ at the level $E_{m,\mathcal{G}}$ such that $\dist_{E}(r_n,K_n)\to 0$ as $n\to \infty$. Furthermore, up to a subsequence, we deduce that \begin{equation*}\sup_{n\geq 1}\|r_n\|_{E}<+\infty.
\end{equation*}

Let $v_n:=s_n\star r_n$ with $s_n:=s_{r_n}$. It is clear that $\{v_n\}\subset \cP_m$. We then obtain, as $n\to \infty$,
\begin{align*}
	I(v_n)=I(s_n\star r_n)=\Psi(r_n)=J(r_n)\to E_{m, \mathcal{G}}.
\end{align*}
We conclude the proof by showing that $dI(v_n)\to 0$ strongly in the dual space of $T_{v_n}\cS_m$. To this end,
we claim first that there exists $C_0>0$ such that $e^{-s_n}\leq C_0$ for every $n$. For the sake of clarity, the proof of this assertion will be deferred. Note that for any $\varphi\in T_{v_n}\cS_m$, we easily check that
\begin{align*}
	\int_{\R^2}r_n((-s_n)\star\varphi)\dif x=\int_{\R^2}(s_n\star r_n)\varphi\dif x=\int_{\R^2}v_n\varphi\dif x=0,
\end{align*}
which implies $(-s_n)\star \varphi \in T_{r_n}\cS_m$. From the above claim, there exists $C_1>0$ independent of $n$ such that
\begin{equation*}
\|(-s_n)\star\varphi\|^2_E=\|\varphi\|^2_2+e^{-2s_n}\|\nabla \varphi\|^2_2+e^{(\frac{4}{p}-4)s_n}\|\nabla \varphi\|^2_p\leq C^2_1\|\varphi\|^2_E.
\end{equation*}
Thereby, we derive from Corollary $\ref{cor1}$ that
\begin{align*}
	\|dI(v_n)\|_{v_n,*}&
	=\sup_{\substack{\varphi\in T_{v_n}\cS_m \\ \|\varphi\|_E\leq 1}}|dI(s_n\star r_n)(s_n\star((-s_n)\star\varphi))|&\\
	&=\sup_{\substack{\varphi\in T_{v_n}\cS_m \\  \|\varphi\|_E\leq 1}}|dJ(r_n)((-s_n)\star \varphi)|&\\
	&\leq \|dJ(r_n)\|_{r_n,*}\sup_{\substack{\varphi\in T_{v_n}\cS_m \\ \|\varphi\|_E\leq 1}}\|(-s_n)\star \varphi\|_E
	 \leq C_1\|dJ(r_n)\|_{r_n,*},
\end{align*}
which implies $	\|dI(v_n)\|_{v_n,*}\to 0$ as $n\to\infty$.

Finally, we return to prove the claim. To this end, we show that $\{\|\nabla v_n\|_2\}$ is bounded from below by a positive constant. Suppose by contradiction that $\|\nabla v_n\|_2\to 0$ as $n\to \infty$. In fact, by  $(f_1)$-$(f_2)$, for given $\alpha>\alpha_0$,  and $\varepsilon>0$,  there exists  $C_{\varepsilon}>0$ such that,
\begin{align*}
	F(t)\leq f(t)t \leq \varepsilon\abs{t}^{4}+C_{\varepsilon}\abs{t}^{4}\big(e^{\alpha\abs{t}^2}-1\big) ,  \quad \forall s\in \mathbb{R}.
\end{align*}
It is easy to check that $\|v_n\|^2_2=m$ and $\|\sqrt{\alpha/\pi}\nabla v_n\|_2\leq 1$ for sufficiently large $n$.
From Lemma \ref{lem2.1}, we therefrom obtain a constant $C_2>0$ such that
\begin{align*}
	\int_{\mathbb{R}^2}\big(e^{2\alpha|v_n|^2}-1\big)\dif x\leq \int_{\mathbb{R}^2}\big(e^{2\pi|\sqrt{\alpha/\pi} v_n|^2}-1\big)\dif x\leq C^2_2.
\end{align*}
Combining the  H\"{o}lder inequality with the
Gagliardo--Nirenberg interpolation inequality in $H^1(\R^2)$, there exists $C_3, C_4>0$  such that
\begin{align*}
	\int_{\mathbb{R}^2} H(v_n) \dif x&\leq3\varepsilon\int_{\mathbb{R}^2}|v_n|^4 \dif x+3C_{\varepsilon}\int_{\mathbb{R}^2}|v_n|^4\big(e^{\alpha|v_n|^2}-1\big)\dif x\\
	&\leq 3\varepsilon mC_3\|\nabla v_n\|^{2}_2+3C_{\varepsilon}\|u\|_{8}^4 \left(\int_{\mathbb{R}^2}\big(e^{2\alpha \abs{v_n}^2}-1\big)\dif x\right)^{1/2}\\
	&\leq 3\varepsilon mC_3\|\nabla v_n\|^{2}_2+3mC_{\varepsilon}C_2 C_4\|\nabla v_n\|_{2}^3\leq \frac{1}{2}\|\nabla v_n\|^2_2.
\end{align*}
We then conclude that for sufficiently large $n$,
\begin{align*}
	0=P(v_n)&=\|\nabla v_n\|^2_2+2\big(1-\frac{1}{p}\big)\|\nabla v_n\|^p_p-\int_{\R^2}H(v_n)\dif x\\
	&\geq \frac{1}{2}\|\nabla v_n\|^2_2+2\big(1-\frac{1}{p}\big)\|\nabla v_n\|^p_p>0,
\end{align*}
which is a contradiction. Note that $\sup_{n}\|r_n\|_{E}<+\infty$. It then follows that there exists $C_0>1$ independent of $n$ such that
\begin{align*}
	e^{-s_n}=\left(\frac{\int_{\R^2}|\nabla r_n|^2\dif x}{\int_{\R^2}|\nabla v_n|^2\dif x}\right)^{\frac{1}{2}}\leq C_0.
\end{align*}
The claim follows.
\end{proof}

\noindent {\bf Proof of Theorem \ref{thm1}}. Repeating the arguments of Lemma \ref{Ps1} within the function space $E_r$,  and noting Remark \ref{rem34}, we find that there exists a Palais--Smale sequence $\{v_n\}\subset \cP_m\cap E_r$ for the functional $I$ such that, as $n\to\infty$,
\begin{equation*}
I(v_n)\to \gamma_m=\gamma_{m,r}\; \text{ and } \; I|_{\cS_m}'(v_n)\to 0, \text{ strongly in } E^*_r.
\end{equation*}  Combining Lemma \ref{equalvalues} with Remark \ref{rem34}, we conclude that for sufficiently large $\xi$,  \begin{equation*}\gamma_{m,r}=\gamma_m=\varUpsilon_{m}=\varUpsilon_{m,r}.\end{equation*}
As a result, $\{v_n\}\subset \cP_m\cap E_r\subset \cB_m\cap E_r$ is a special minimization sequence of $\varUpsilon_{m}$. By employing Lemma \ref{equalvalues}  once more, we conclude that  there exists $\xi_0>0$ and $v\in \cS_m\cap E_r$ such that $v_n\to v$ strongly in $E$ and $I(v)=\varUpsilon_{m}>0$ for all $\xi\geq \xi_0$.
Recalling that $\| d J(r_{n})\| _{r_{n,*}}\to 0$ strongly, by \cite{BL2}*{Lemma 3}, there exists $\lambda_n\in \R$ such that
\begin{equation*}dJ(r_n)+\lambda_nr_n=o(1).\end{equation*}
Here $o(1)$ denote an infinitesimal in $E_r^*$. Note that $s_{r_n}\star \phi\in T_{v_n}\cS_m$ for any $\phi\in T_{r_n}\cS_m$ and both of $\{s_n\}$ and $\{\|r_n\|_E\}$ are bounded. We therefore deduce by Corollary \ref{cor1} that \begin{equation*}I'(v_n)+\lambda_n v_n=o(1),\end{equation*}
that is, we have
\begin{equation*}-\Delta v_n-\Delta_p v_n +\lambda_n v_n-f(v_n)\to 0, \quad \text{ strongly  in  } E_r^*,\end{equation*}
which gives us that
\begin{equation*}
\int_{\R^2}\abs{\nabla v_n}^2 \dif x +\int_{\R^2}\abs{\nabla v_n}^{p}\dif x+\lambda_n \int_{\R^2}\abs{v_n}^2\dif x=\int_{\R^2} f(v_n)v_n\dif x+o_n(1).
\end{equation*}
This, together with the fact that $v_n\to v$ strongly in $E$, implies that the boundedness of $\{\lambda_n\}$. In fact, we have
\begin{equation*}
	\lambda_n m=\int_{\R^2} f(v_n)v_n\dif x-\int_{\R^2}\abs{\nabla v_n}^2\dif x -\int_{\R^2}\abs{\nabla v_n}^{p}\dif x.
	\end{equation*}
Up to a subsequence, we assume that $\lambda_n\to \lambda\in \R$. By the principle of symmetric criticality \cite{Palais79}, we then conclude that $v\in \cS_m\cap E_r$ with $I(v)=\gamma_m$ satisfying
\begin{align*}
-\Delta_p v-\Delta v+\lambda v=f(v),  \quad  x\in \mathbb R^2.
\end{align*}
 Consequently, we have
\begin{equation*}
\|\nabla v\|^2_2+\|\nabla v\|_p^p+\lambda \|v\|_2^2=\int_{\R^2}f(v)v \dif x.
\end{equation*}

We finally conclude that  $\lambda_\xi>0$ for all $\xi\geq \xi_0$.
To be more clear, we use the notation $v_{\xi}:=v$ and $\gamma_m(\xi):=\gamma_m$ to exhibit the dependence of $v$ and $\gamma_m$ on
$\xi$, respectively. By contradiction, we suppose there exists $\xi_k\to+\infty$ such that $\lambda_{\xi_k}\leq 0$.
Note that for every $k\in \mathbb N$, $v_{\xi_k}\in \cP_m$ and $I(v_{\xi_k})=\gamma_m(\xi_k)$. We thus obtain
\begin{align}\label{posilagmultip}
\lambda_{\xi_k} m+2\gamma_m(\xi_k)
	=\|\nabla v_{\xi_k}\|_2^2+\|\nabla v_{\xi_k}\|_p^p.
\end{align}
It then follows that $\{\lambda_{\xi_k}\}$ is bounded in $\mathbb R$.  Without loss of generality, we may assume that  $\lambda_{\xi_k} \to\lambda_0$ as $k\to \infty$, which implies that $\lambda_0\leq 0$. By Lemma \ref{lem3.5}, we see that  $\gamma_{m}(\xi_k)\to 0^+$ as $k\to\infty$. If $\lambda_0<0$, then for sufficiently large $k$,  we have $\lambda_{\xi_k}\leq \lambda_0/2$,  which, together with \eqref{posilagmultip} and $\gamma_m(\xi_k)\to 0$ as $k\to\infty$, implies that $\|v_{\xi_k}\|_{2,p}<0$, this is impossible.  On the other hand,  if  $\lambda_0=0$, from \eqref{posilagmultip} and Lemma \ref{lem3.5} again, we obtain $\|v_{\xi_k}\|_{2,p}\to 0$ as $k \to \infty$. This would also lead to a contradiction once we apply the arguments from \eqref{upperbound} to \eqref{lowerbound}.
 \qedsymbol

\vskip.2in
\noindent{\bf Funding }
\noindent
This work was partially supported by
NSFC 12271436 and 12371119.
\vskip.1in
\noindent{\bf Data availability } Data sharing is not applicable to this article, as no datasets were generated or analyzed during the current study.
\vskip.1in

\noindent{\bf Conflict of interest} The authors declare that they have no financial or proprietary interests in any material discussed in this article. On behalf of all authors, the corresponding author states that there are no conflicts of interest.
 \vskip.1in


\bigskip
\addcontentsline{toc}{section}{References}
 

\end{document}